\newtheorem{theorem}{Theorem}[section]
\newtheorem{lemma}[theorem]{Lemma}
\newtheorem{definition}[theorem]{Definition}
\newtheorem{prop}[theorem]{Proposition}
\newtheorem{cor}[theorem]{Corollary}
\newcommand{\cH}{\mathcal{H}}
\newcommand{\cV}{\mathcal{V}}
\DeclareMathOperator{\dist}{dist}
\DeclareMathOperator{\spn}{span}
\DeclareMathOperator{\rank}{rank}
\DeclareMathOperator{\im}{im}
\DeclareMathOperator{\proj}{proj}
\title{Zero loci of nullvectors and skew zero forcing in graphs and hypergraphs}
\author{Joshua Cooper and Grant Fickes}
\date{\today}
\begin{document}

\maketitle

\begin{abstract}
    There is interesting internal structure in the nullspaces of graph and hypergraph adjacency matrices, especially for trees, bipartite graphs, and related combinatorial classes.  The zero loci of nullvectors, i.e., their zero coordinates' indices, encode information about matchings, coverings, and edges' influence on rank.  This set system is the lattice of flats of a ``kernel matroid'', a subsystem of which are the ``stalled'' sets closed under skew zero forcing (SZF), a graph percolation/infection model known to have connections with rank and nullity. For a wide variety of graphs, the lattice of SZF-closed sets is also a matroid, a fact which can be used to obtain a polynomial-time algorithm for computing the skew zero forcing number.  This contrasts with the general case, where we show that the corresponding decision problem is NP-hard.  We also define skew zero forcing for hypergraphs, and show that, for linear hypertrees, the poset of SZF-closed sets is dual to the lattice of ideals of the hypergraph's nullvariety; while, for complete hypergraphs, the SZF-closed sets and the zero loci of nullvectors are more loosely related.
\end{abstract}

\section{Introduction}

It is classical that the multiplicity of zero as Laplacian eigenvalue of a graph is the number of connected components (see \cite{Chu97}), but the nullity of adjacency matrices is much subtler.  Significant attention has been paid to understanding the adjacency nullity of graphs (an important survey is \cite{GutBor2011}), and to a much lesser extent, hypergraphs (e.g., \cite{CooFic2021}).  The present work is an attempt to understand not just the multiplicity of zero, but its associated nullspace -- or, in the case of hypergraphs, its ``nullvariety''.

In particular, we show that the set of adjacency nullvectors can be decomposed into combinatorially informative components according to their ``zero loci'': the coordinates where the vectors are zero.  Inspired in part by work of Sciriha et al -- for example, \cite{SciMifBor2021} -- we begin by examining the minimal zero locus of trees' nullvectors, which we call their ``generating set''. (Some literature refers to these sets as ``core-forbidden vertices.'')
In Section \ref{Sec:Equiv}, we show that this set has a number of interesting interpretations from the perspective of maximum matchings, vertex covers, the Dulmage-Mendelsohn/Gallai-Edmonds decomposition, the effect on rank of edge deletion and contraction, and {\it skew zero forcing}.

Graph forcing is a topic that emerged from studying the maximum rank and nullity of real matrices in a special class $\mathcal{P}$ (symmetric, skew-symmetric, positive semidefinite, etc) whose nonzero entries correspond to edges of a given graph.  See \cite{HoLiSh22} for an exploration of this rapidly developing topic.  The term ``forcing'' refers to iteratively applying a color-change rule to a unfilled/filled vertex coloring wherein the filled set spreads until it ``stalls''.  (Some literature, e.g., \cite{IMAISU2010}, calls stalled sets ``derived sets'' and refers to unfilled/filled as white/blue or white/black.) The size of the smallest set which only stalls when the whole graph is filled is the ``$\mathcal{P}-$zero forcing number'', and the size of the largest stalled proper subset is the ``failed $\mathcal{P}$-zero forcing number''.  The most common classes considered are ``zero forcing'' (for $\mathcal{P}$ the symmetric matrices) and ``skew zero forcing'' (for $\mathcal{P}$ the skew-symmetric matrices).  For concision, we typically write ``SZF'' for ``skew zero forcing''.

In Section \ref{Sec:Matroids}, we show that the skew zero forcing rule is a closure operator, and the family of SZF-stalled sets is {\it often} the collection of closed sets of a matroid we term the ``SZF matroid.''  Furthermore, the collection of zero loci of nullvectors is always a matroid, the ``kernel matroid'', a quotient of which is the SZF matroid.  We show that these two matroids are identical -- a property we term ``SZF-completeness'' -- for trees, cycles of length divisible by $4$, complete bipartite graphs, and graphs derived by various operations applied to smaller SZF-complete graphs.  We also characterize nonsingular bipartite SZF-complete graphs, extending results of \cite{AnsJacPenSaa2016}, and answer a question of theirs by showing that, while it is NP-hard in general to decide if the skew zero forcing number -- the smallest set which SZF-closes to the full vertex set -- is at most $k$, this quantity can be computed in polynomial time if the set system is a matroid.  In particular, minimal sets whose closure is the whole vertex set are bases, so the rank of the matroid is the SZF number.  (The maximal sets whose closure is {\it not} the whole vertex set -- ``failed sets'' -- are hyperplanes/coatoms.)

In Section \ref{Sec:Hypergraphs}, we extend the story to hypergraphs, giving a new SZF rule for hypergraphs.  (Hogben has studied another choice of rule with nice properties: \cite{Hog2020}.)  We show that SZF-closed sets are a special kind of vertex cover which are in bijection with irreducible components of the nullvariety for linear hypertrees, and describe the nullvarieties' components and SZF-closed families for complete hypergraphs via symmetric polynomials.

Finally, in Section \ref{Sec:Conclusion}, we mention several open problems arising from the present work.\\

Throughout the sequel, we refer to the set of adjacency nullvectors of a graph or hypergraph $G$ by $\ker(G)$.  For graphs, this means that $\ker(G) = \{\mathbf{v} : A(G)\mathbf{v} = \mathbf{0}\}$, where $A(G)$ is the adjacency matrix of $G$; for hypergraphs, we explain the more complicated definition in Section \ref{Sec:Hypergraphs}.  Given a vector $\mathbf{v} = (v_{u_1},\ldots,v_{u_n}) \in \mathbb{C}^{V(G)}$, the set $Z(\mathbf{v}) := \{u \in V(G) : v_u = 0\}$ is the {\it zero locus} of $\mathbf{v}$.

\section{Minimal Zero Loci of Trees}\label{Sec:Equiv}

In this section, we investigate the minimal zero loci of nullvectors of trees.  First, we describe the relationship between edges which are mandatory/optional/forbidden in maximum matchings and vertices which are mandatory/optional/forbidden in minimum vertex covers, via a ``thermal decomposition'' of trees.  In the next subsection, we formally introduce skew zero forcing (``SZF'') and show that it gives rise to a closure operator on vertex sets.  For trees, it turns out that the SZF-closed sets are exactly the zero loci of nullvectors.  In the third subsection, the previous results are connected with the Dulmage-Mendelsohn decomposition in one of our main theorems: a multifaceted characterization of trees' generating sets.  Then, the last subsection gives another description of the thermal decomposition in terms of the effect on rank of deletion or contraction of edges.

\subsection{Matchings, Coverings, and the Thermal Decomposition}

Recall that a {\it matching} of a graph $G$ is a set $F \subseteq E(G)$ of pairwise disjoint edges, and that a {\it cover} is a set $S \subseteq V(G)$ of vertices so that every edge $e \in E(G)$ has a nonempty intersection with $S$.  A cover is {\it minimum} if it has minimum cardinality among all covers, and a matching is {\it maximum} if it has maximum cardinality among all matchings.  A vertex $v \in V(G)$ is a {\it pendant vertex} if it has degree $1$, and an edge is a \textit{leaf edge} if it contains a pendant vertex.  A matching $M$ {\it saturates} a vertex $v$ if there exists $e \in M$ with $v \in e$, and it is a {\it perfect} matching if it saturates all of $V(G)$.

\begin{definition}
Let $T$ be a tree. We say the edge $e \in E(T)$ is \textbf{matching-frozen} if either $e$ is contained in every maximum matching of $T$ or contained in no maximum matching of $T$. If the edge $e$ is not matching-frozen, then we call $e$ \textbf{matching-thawed}, i.e., $e$ is contained in some but not all maximum matchings of $T$.  
\end{definition}

\begin{definition}\label{Def:ThermalDecomp}
Let $T$ be a tree. We say the \textbf{thermal decomposition} of $T$ is a partition of $E(T)$ into three classes, $(M_T, F_T, O_T)$, so that $M_T$ (``mandatory'') is the collection of edges of $T$ in every maximum matching, $F_T$ (``forbidden'') is the collection of edges of $T$ in no maximum matching, and $O_T$ (``optional'') is the collection of edges of $T$ in some but not all maximum matching of $T$. Furthermore, let $F_T' \subseteq F_T$ be the collection of forbidden edges of $T$ which are incident to at least one edge of $O_T$. Lastly, define $\mathcal{F}(T) = (V(T), E(T) \setminus F_T')$. 
\end{definition}

\begin{definition}
Let $T$ be a tree. We say the vertex $v \in V(T)$ is \textbf{cover-frozen} if $v$ is contained in every minimum cover of $T$ or if $v$ is contained in no minimum cover of $T$. If the vertex $v$ is not cover-frozen, then we call $v$ \textbf{cover-thawed}, i.e., if $v$ is in some, but not all minimum covers of $T$. 
\end{definition}

We recall the following basic fact about matchings and covers.  We refer the interested reader to \cite{LoPl09} for a great deal more on this subject.

\begin{prop}\label{prop:CcoversM}
Let $T$ be a tree, $M\subseteq E(T)$ a maximum matching of $T$ and $C\subseteq V(T)$ a minimum vertex cover of $T$. Then every edge of $M$ contains exactly one vertex of $C$. Furthermore, for every $v\in C$, there exists $e\in M$ so that $v \in e$. 
\end{prop}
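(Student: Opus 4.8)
The plan is to use the fundamental duality between maximum matchings and minimum vertex covers—specifically König's theorem, which applies to trees since they are bipartite, giving $|M| = |C|$.

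First I would establish that every edge of $M$ contains \emph{at least} one vertex of $C$: this is immediate from the definition of a cover, since $C$ meets every edge of $G$, in particular every edge of $M$. Next I would rule out an edge of $M$ containing \emph{two} vertices of $C$. Suppose $e = \{x,y\} \in M$ with both $x,y \in C$. Consider the map $\phi$ that sends each edge $f \in M$ to a vertex of $C \cap f$ (choosing one arbitrarily when there are two). Since $M$ is a matching, its edges are pairwise disjoint, so $\phi$ is injective; hence $|C| \ge |\phi(M)| = |M|$. But König's theorem gives $|C| = |M|$, so $\phi$ must in fact be a \emph{bijection} onto $C$, forcing $|C \cap f| = 1$ for every $f \in M$ (otherwise some vertex of $C$ would lie outside the image of $\phi$, contradicting surjectivity, or $\phi$ could not be made injective with image of size $|M|$). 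This proves the first claim.

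For the second claim—that every $v \in C$ is saturated by $M$—I would argue by contradiction: if $v \in C$ is unsaturated by $M$, then $v$ is not the chosen vertex for any edge of $M$, so the image of $\phi$ is contained in $C \setminus \{v\}$, giving $|M| = |\phi(M)| \le |C| - 1 < |C| = |M|$, a contradiction. Alternatively, one observes that since each edge of $M$ already contains exactly one vertex of $C$, the $|M|$ edges of $M$ account for exactly $|M| = |C|$ distinct vertices of $C$ (distinct because $M$'s edges are disjoint), so there is no room for an unsaturated vertex of $C$.

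I expect the main (minor) obstacle to be the bookkeeping around König's theorem: one must be careful that it is genuinely being invoked for the equality $|M| = |C|$, and that the injectivity-plus-equal-cardinality argument is spelled out cleanly to conclude bijectivity. Since trees are bipartite and acyclic, there is no subtlety in applying König, and no case analysis on the structure of $T$ is needed; the entire argument is a short counting argument riding on the duality $|M| = |C|$.
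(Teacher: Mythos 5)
Your proposal is correct and follows essentially the same route as the paper: invoke the K\H{o}nig--Egerv\'{a}ry theorem (applicable since trees are bipartite) to get $|M|=|C|$, note that each matching edge meets $C$ and that distinct matching edges cannot share a cover vertex, and let the resulting counting force both conclusions. Your write-up is, if anything, slightly more explicit than the paper's about why equality of cardinalities pins down exactly one cover vertex per edge and leaves no unsaturated vertex of $C$.
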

\begin{proof}
Since $T$ is a tree, $|M| = |C|$, by the K\H{o}nig-Egerv\'{a}ry Theorem. Since $C$ is a cover of $T$ and $M\subseteq E(T)$, clearly every edge of $M$ contains at least one vertex of $C$. On the other hand, $M$ is an independent collection of edges, so no two elements of $M$ contain the same vertex of $C$. Thus, $|M| = |C|$. 

As for the second part of the claim, $|M| = |C|$ together with the result given by the previous paragraph give the proof. 
\end{proof}

\begin{prop}\label{Claim:incidenttoMT}
Let $T$ be a tree. Then no edge of $O_T$ is incident to an edge of $M_T$. Moreover, if $v\in V(T)$ then the edges incident to $v$ consist of one of the following. 
\begin{itemize}
\item Exactly one edge incident to $v$ is in $M_T$ and the rest are in $F_T$. 
\item Some edges incident to $v$ are in $O_T$ while all others are in $F_T$. 
\end{itemize}
\end{prop}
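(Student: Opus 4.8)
The plan is to derive both assertions from the definitions, with the vertex classification resting on the first assertion together with one genuinely substantive fact about matching numbers.

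For the first assertion I would argue by contradiction. If $e\in O_T$ were incident to $f\in M_T$, then, because $e\in O_T$, some maximum matching $M$ of $T$ contains $e$; because $f\in M_T$, that same $M$ also contains $f$. But $e$ and $f$ share a vertex, so they cannot both belong to the matching $M$ — a contradiction. Hence no edge of $O_T$ is incident to an edge of $M_T$.

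Next I turn to the vertex classification; fix $v\in V(T)$, for which we may assume $\deg v\ge 1$. I would first note that at most one edge incident to $v$ lies in $M_T$: two such edges would both have to appear in every maximum matching, yet they share $v$. Now split on whether some edge at $v$ lies in $M_T$. If an edge $e$ at $v$ lies in $M_T$, then by the first assertion no edge at $v$ lies in $O_T$, and $e$ is the only edge at $v$ in $M_T$ by the preceding observation, so every other edge at $v$ lies in $F_T$ — the first alternative. If no edge at $v$ lies in $M_T$, then every edge at $v$ lies in $O_T\cup F_T$, and it suffices to show that not every edge at $v$ lies in $F_T$: then some edge at $v$ is in $O_T$, while the rest, being neither in $M_T$ nor in $O_T$, are in $F_T$ — the second alternative.

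The crux is therefore to rule out a vertex all of whose incident edges are forbidden. Suppose every edge at $v$ is in $F_T$. Then no maximum matching of $T$ uses an edge at $v$, so every maximum matching of $T$ avoids $v$ and is thus a matching of $T-v$; together with the obvious bound $\nu(T)\ge\nu(T-v)$ (writing $\nu(\cdot)$ for the maximum matching size) this gives $\nu(T-v)=\nu(T)$. Pick a neighbor $u$ of $v$, let $T_1$ be the component of $T-v$ containing $u$, and let $T_1'$ be the union of the remaining components of $T-v$. Choosing a maximum matching $M_1$ of $T_1-u$ and a maximum matching $M_1'$ of $T_1'$, the set $M^\ast=\{uv\}\cup M_1\cup M_1'$ is a matching of $T$ (its three parts are pairwise vertex-disjoint), and, using that removing one vertex lowers the matching number by at most one and that the matching number is additive over the components of a forest,
\[
|M^\ast| \;=\; 1+|M_1|+|M_1'| \;\ge\; 1+(\nu(T_1)-1)+\nu(T_1') \;=\; \nu(T-v) \;=\; \nu(T).
\]
Since $\nu(T)$ is the maximum matching size, $M^\ast$ is a maximum matching of $T$ containing $uv$, contradicting $uv\in F_T$. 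This finishes the classification, and this last step — the displayed inequality in particular — is the only part I expect to require real work; everything else is immediate from the definitions and the first assertion.
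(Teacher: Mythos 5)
Your proof is correct. For the first assertion and for the case where $v$ meets a mandatory edge, your argument is the same as the paper's: a mandatory edge lies in every maximum matching, so nothing incident to it can lie in any maximum matching, and two mandatory edges cannot share a vertex. Where you genuinely diverge is in the remaining case. The paper's proof simply remarks that if $v$ meets no edge of $M_T$ then every edge at $v$ lies in $O_T\cup F_T$, and never addresses whether at least one of them is actually optional; read with ``some'' meaning ``at least one,'' that leaves a gap which your proof closes by ruling out a vertex all of whose incident edges are forbidden. Your argument for that step is sound (the inequality $1+\nu(T_1-u)+\nu(T_1')\ge\nu(T-v)=\nu(T)$ checks out), but it can be shortened to the standard exchange the paper itself uses elsewhere, e.g.\ in the proof of Proposition \ref{Claim:FincidentToCoverVertices}: if every edge at $v$ is forbidden, then $v$ is unsaturated in a maximum matching $M$; a neighbor $u$ must be saturated by some $e'\in M$ (else $M\cup\{uv\}$ would be larger), and then $(M\setminus\{e'\})\cup\{uv\}$ is a maximum matching containing $uv$, a contradiction. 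So your write-up is, if anything, more complete than the paper's own proof, at the cost of some avoidable component bookkeeping.
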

\begin{proof}
We first justify that no edge of $O_T$ is incident to an edge of $M_T$. If $e \in M_T$, then all other edges incident to $e$ are elements of $F_T$, since the degree of any vertex in a matching is no more than one. 

As for the enumeration in the second part of the claim, if $v\in V(T)$ is incident to an edge of $M_T$, then clearly $v$ is incident to just one edge of $M_T$. The first part of the proof shows that all other edges incident to $v$ come from $F_T$. If $v$ is not incident to an edge of $M_T$, then the edges incident to $v$ fall into one of the other two categories. 
\end{proof}

By the preceding proposition, the components of $\mathcal{F}(T)$ come in exactly two types: either all edges belong to $O_T$, or all edges belong to $M_T \cup F_T$.  The former matching-thawed components are refered to as bc-trees below (see Theorem \ref{Thm:Equivbctree}), while the latter matching-frozen components have a perfect matching.

\begin{prop}\label{Claim:maxmatchingiffrestricts}
Let $T$ be a tree. Then $M$ is a maximum matching of $T$ if and only if $M$ restricts to a maximum matching of the components of $\mathcal{F}(T)$. Additionally, $C$ is a minimum cover of $T$ if and only if $C$ restricts to a minimum cover of the components of $\mathcal{F}(T)$. 
\end{prop}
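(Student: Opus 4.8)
The plan is to prove the matching half first and then bootstrap the cover half from it via the K\H{o}nig--Egerv\'ary theorem, with essentially all the difficulty concentrated in the ``if'' direction for covers. Throughout let $\mathcal{B}$ denote the (pairwise vertex-disjoint) components of $\mathcal{F}(T)$, and write $\nu(G)$ for the maximum size of a matching of $G$ and $\tau(G)$ for the minimum size of a cover.

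For matchings: a maximum matching $M$ of $T$ uses no edge of $F_T \supseteq F_T'$, so $M \subseteq E(\mathcal{F}(T)) = \bigsqcup_{B\in\mathcal{B}} E(B)$ and $M = \bigsqcup_B M_B$ with $M_B := M\cap E(B)$; were some $M_{B_0}$ not maximum in $B_0$, swapping in a larger matching of $B_0$ (legal since the $B$'s are vertex-disjoint) would beat $M$, so each $M_B$ is maximum and $\nu(T) = \sum_B \nu(B)$. Conversely, if $M$ is any matching of $T$ whose restriction to each $B$ is maximum, then $|M| \ge \sum_B |M\cap E(B)| = \sum_B \nu(B) = \nu(T) \ge |M|$, so $M$ is maximum. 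For the easy direction on covers, if $C$ is a minimum cover of $T$ then each $C\cap V(B)$ covers $E(B)$ (every edge of $B$ is an edge of $T$ with both ends in $V(B)$), so $|C| = \sum_B |C\cap V(B)| \ge \sum_B \tau(B) = \sum_B \nu(B) = \nu(T) = \tau(T) = |C|$ by K\H{o}nig--Egerv\'ary on each $B$ and on $T$ together with the matching statement; equality throughout forces each $C\cap V(B)$ to be a minimum cover of $B$.

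The hard direction on covers is where the real content lies. Suppose $C = \bigsqcup_B C_B$ with each $C_B$ a minimum cover of $B$; then $|C| = \sum_B \tau(B) = \tau(T)$, so it suffices to verify that $C$ is a cover of $T$ at all, i.e.\ that $C$ meets every $e = \{u,v\} \in F_T'$. Say the optional edge witnessing $e\in F_T'$ meets $e$ at $u$; that optional edge is not deleted and lies at $u$, hence in the component $B_u$ of $u$, so $B_u$ is matching-thawed. Since $e$ is forbidden, $\nu(T-u-v) = \nu(T)-2$ (a standard fact: an edge is forbidden iff deleting its endpoints drops $\nu$ by $2$), which forces $\nu(T) - \nu(T-u) = 1$, i.e.\ $u$ is saturated by every maximum matching of $T$; the matching edge at $u$ is not forbidden and lies at $u$, hence in $B_u$, so by both directions of the matching statement $u$ is saturated by every maximum matching of $B_u$ as well. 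The argument then reduces to the lemma that \emph{a matching-thawed component $B$ has a unique minimum cover, namely its set $A(B)$ of vertices saturated by all maximum matchings of $B$}; granting this, $u\in C_{B_u}\subseteq C$ and $e$ is covered, completing the proof. I expect this lemma to be the main obstacle.

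To prove the lemma I would use the Gallai--Edmonds structure of $B$. Since all edges of $B$ are optional in $B$, $B$ has no mandatory edge, which forces the Gallai--Edmonds $C$-part to be empty (a non-empty $C$-part would contain the forced perfect matching of one of its components, making those edges mandatory), and no forbidden edge, which forces $A(B)$ to be independent (by Gallai--Edmonds each vertex of $A(B)$ is matched into a distinct $D(B)$-component by every maximum matching, so an edge inside $A(B)$ lies in no maximum matching). Hence $B$ is bipartite with parts $D(B)$ (vertices missed by some maximum matching, an independent set) and $A(B)$, so $A(B)$ is a cover of size $\nu(B)=\tau(B)$, hence minimum; moreover every vertex of $A(B)$ has degree $\ge 2$, since a pendant of $B$ is missed by some maximum matching and so lies in $D(B)$. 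If a minimum cover $C'$ of $B$ omitted a vertex of $A(B)$, then $S := A(B)\setminus C'$ would be a nonempty independent set with $N_B(S)\subseteq C'\cap D(B)$ and $|C'\cap D(B)| = |A(B)|-|C'\cap A(B)| = |S|$, so $|N_B(S)| \le |S|$; but then the forest $B[S\cup N_B(S)]$ would carry $\sum_{s\in S}\deg_B(s) \ge 2|S| \ge |S|+|N_B(S)|$ edges on only $|S|+|N_B(S)|$ vertices, which is impossible. Everything else is bookkeeping with K\H{o}nig's theorem and the disjointness of the components.
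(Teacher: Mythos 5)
Your proof is correct, and it is worth comparing the two halves separately. The matching half is essentially the paper's argument: maximum matchings avoid $F_T\supseteq F_T'$, so they decompose over the vertex-disjoint components, and $\nu(T)=\sum_B\nu(B)$ does the rest. The cover half is where you genuinely diverge. The paper disposes of it in one line by declaring the two statements ``equivalent'' since $|M|=|C|$; that remark settles the forward cover direction (a minimum cover restricted to each component is a cover of the right total size), but it does not by itself handle the backward direction, since a union of component-wise minimum covers has the right cardinality yet must still be shown to meet the deleted edges of $F_T'$. You correctly isolate this as the real content and supply the missing argument: the always-saturated endpoint of a forbidden edge (via $\nu(T-u-v)=\nu(T)-2$) transfers to its matching-thawed component, and such a component has a unique minimum cover equal to its set of always-saturated vertices. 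That last fact is exactly what the paper imports from Harary--Plummer (Theorem \ref{Thm:Equivbctree}) and re-derives later in Propositions \ref{Claim:bctreematchingflexible} and \ref{Claim:FincidentToCoverVertices}; you instead prove it from scratch via the Gallai--Edmonds decomposition (empty $C$-part, independent $A$- and $D$-parts, no pendants in $A$, and a forest edge-count to force uniqueness). The payoff of your route is a self-contained and fully rigorous treatment of the cover statement; the cost is that you duplicate structural facts the paper establishes elsewhere, and in the paper's own logical ordering some of those later results depend on the present proposition, so your inlined derivation is actually what keeps the argument non-circular.
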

\begin{proof}
It was already noted that if $M$ is a maximum matching and $C$ is a minimum cover, then $|M| = |C|$. Therefore, the two statements in question are equivalent. We elect to show the matching form of the result. 

Note that by Proposition \ref{Claim:incidenttoMT}, the components of $\mathcal{F}(T)$ take two forms, trees whose edges come from $F_T \cup M_T$, i.e., trees with a perfect matching, and trees with all edges in $O_T$. 

$(\Rightarrow):$ Clearly the claim holds for the first kind of component. Let $X$ be a component of $\mathcal{F}(T)$ with $E(X) \subseteq O_T$. If $v\in V(X)$ so that $\deg_X(v) < \deg_T(v)$, the definition of $\mathcal{F}(T)$ gives that edges of $E(T)\setminus E(X)$ incident to $v$ are forbidden. Thus, the maximum matching $M$ of $T$ restricts to a maximum matching $M_X = M\cap E(X)$ of $X$, as otherwise would contradict that $M$ was a maximum matching of $T$. 

$(\Leftarrow)$: Let $M$ be the union of a maximum matching of each component of $\mathcal{F}(T)$. Clearly, $M$ is a matching of $T$. Suppose $M'$ is a maximum matching of $T$ with $|M| < |M'|$. Since $E(T) \setminus E(\mathcal{F}(T)) = F_T'$, $M'$ is a matching of $\mathcal{F}(T)$. By the other direction of the proof, $M'$ restricts to a maximum matching of every component of $\mathcal{F}(T)$. Since $|M| < |M'|$, there exists a component $X$ of $\mathcal{F}(T)$ so that $|E(X) \cap M| < |E(X) \cap M'|$, contradicting that $M$ is the union of a maximum matching on all components of $\mathcal{F}(T)$. Thus, $M$ is a maximum matching of $T$. 
\end{proof}

The following theorem appears (stated slightly differently) already in work of Harary and Plummer from 1967.

\begin{theorem}\label{Thm:Equivbctree}
\cite{HarPlu1967} The following statements are equivalent for any tree $T$. 
\begin{enumerate}
\item $E(T) = M_T \cup O_T$ (equivalently: $F_T = \emptyset$)
\item $T$ has a unique minimum cover, the cover is independent, and contains no pendant vertex. 
\item If $v, u_1, \dotsc, u_p$ are the pendant vertices of $T$, then $\dist(v,u_i)$ is even for all $1\leq i\leq p$. 
\item $T$ is a bc-tree, i.e., the distance between any pair of pendant vertices is even. 
\item $T$ is the block-cutpoint-tree of some connected graph $G$. 
\end{enumerate}
\end{theorem}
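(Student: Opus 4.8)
The plan is to prove the cycle $(1)\Rightarrow(2)\Rightarrow(4)\Rightarrow(1)$ together with the two routine equivalences $(3)\Leftrightarrow(4)$ and $(4)\Leftrightarrow(5)$. (One should exclude $T=K_2$, for which $(1)$ holds but $(2)$–$(5)$ fail, since its single edge is the unique, hence mandatory, maximum matching; $K_1$ satisfies everything trivially, so assume $|V(T)|\ge 3$.) The equivalence $(3)\Leftrightarrow(4)$ is pure parity: in a tree the $v$–$u_i$ and $v$–$u_j$ paths from a fixed pendant vertex $v$ share an initial segment, so $\dist(u_i,u_j)\equiv\dist(v,u_i)+\dist(v,u_j)\pmod 2$, making the two conditions equivalent. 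For $(4)\Leftrightarrow(5)$ I would invoke the classical block--cutpoint tree characterization of \cite{HarPlu1967}: in any block--cutpoint tree the cut-vertices have degree $\ge 2$, so the leaves are all block-vertices, lying in a single part of the (bipartite) tree and hence pairwise at even distance; conversely, $2$-color a bc-tree with the leaves forming a part $X$ and replace each $x\in X$ by a block whose cut-vertices are $x$'s neighbors in $Y$ — a pendant edge if $\deg x=1$, else a cycle through those neighbors (padded with new degree-$2$ vertices when $\deg x=2$) — so that the vertices of $Y$ become exactly the cut-vertices of the resulting connected graph $G$ and $\mathrm{bc}(G)=T$. I would present this last direction briefly and lean on \cite{HarPlu1967}.

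For $(1)\Rightarrow(2)$, assume $F_T=\emptyset$. By Proposition \ref{Claim:incidenttoMT}, every vertex is either a pendant vertex whose one edge lies in $M_T$, or a vertex all of whose edges lie in $O_T$; if $vw\in M_T$ then both $v$ and $w$ are of the first kind, so $\{v,w\}$ is a $K_2$-component, impossible since $T$ is connected with $\ge 3$ vertices. Hence $M_T=\emptyset$ and $E(T)=O_T$. Let $C$ be a minimum cover. It is independent, since adjacent $c_1,c_2\in C$ would make $c_1c_2\in O_T$ lie in some maximum matching $M_0$, contradicting Proposition \ref{prop:CcoversM} (each edge of $M_0$ meets $C$ exactly once). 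It is pendant-free, since a pendant $v\in C$ with neighbor $w$ would, by Proposition \ref{prop:CcoversM} applied to every maximum matching, force $vw$ into all of them, i.e.\ $vw\in M_T=\emptyset$. Finally it is the unique minimum cover: an independent cover is one part of the unique bipartition $(X,Y)$ of $T$, and if a second minimum cover were the other part then $|X|=|Y|$, whence by K\H{o}nig-Egerv\'{a}ry $|V(T)|=2\nu(T)$, so $T$ has a perfect matching; a tree's perfect matching is unique and hence the only maximum matching, giving $O_T=\emptyset$ and contradicting $E(T)=O_T\ne\emptyset$.

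For $(2)\Rightarrow(4)$, an independent cover $C$ is one part of the unique bipartition $(X,Y)$ of the connected bipartite tree $T$, and pendant-freeness of $C$ puts every leaf in the opposite part; two vertices in the same part of a bipartite graph are at even distance, so the leaves are pairwise at even distance.

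The remaining step $(4)\Rightarrow(1)$ is the one I expect to be the main obstacle. Take the bipartition $(X,Y)$ with all leaves in $X$, so every $y\in Y$ has $\deg_T(y)\ge 2$. A counting step then gives $|N_T(S)|\ge|S|+1$ for every nonempty $S\subseteq Y$: the edges between $S$ and $N_T(S)$ number $\sum_{s\in S}\deg_T(s)\ge 2|S|$, but they form a forest on $|S|+|N_T(S)|$ vertices and so number at most $|S|+|N_T(S)|-1$. By Hall's theorem some matching saturates $Y$, so $\nu(T)=|Y|$ and every maximum matching saturates $Y$. For an arbitrary edge $e=xy$ with $x\in X$, $y\in Y$, the same bound gives $|N_{T-x-y}(S)|\ge|N_T(S)|-1\ge|S|$ for all nonempty $S\subseteq Y\setminus\{y\}$, so a matching of $T-x-y$ saturates $Y\setminus\{y\}$; adjoining $e$ yields a maximum matching of $T$ through $e$. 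Hence no edge is forbidden and $F_T=\emptyset$, which is $(1)$. Aside from this Hall-type argument, the proof is bookkeeping with Propositions \ref{prop:CcoversM} and \ref{Claim:incidenttoMT} and the uniqueness of perfect matchings in trees.
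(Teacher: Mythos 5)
Your proof is correct, but it is doing something the paper does not attempt: the paper states this theorem as a known result and simply cites \cite{HarPlu1967}, offering no argument of its own. Your proposal is therefore a genuinely different route by default -- a self-contained proof. The two nontrivial ingredients you supply are sound. For $(1)\Rightarrow(2)$, reading Proposition \ref{Claim:incidenttoMT} with $F_T=\emptyset$ to force $M_T=\emptyset$ (for $|V(T)|\ge 3$) and then deducing that every minimum cover is independent, hence a part of the unique bipartition, is clean; the uniqueness step is fine once one notes, as your argument implicitly does, that the independence argument applies to \emph{every} minimum cover, so two distinct minimum covers would have to be the two parts, forcing a perfect matching and hence $O_T=\emptyset$. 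For $(4)\Rightarrow(1)$, the forest edge-count $\sum_{s\in S}\deg_T(s)\le |S|+|N_T(S)|-1$ giving the strict Hall condition $|N_T(S)|\ge |S|+1$ on the leaf-free side, and then running Hall again on $T-x-y$ to route a maximum matching through an arbitrary edge, is exactly the right mechanism and is the step a reader could not reconstruct from the surrounding propositions alone. What your approach buys is transparency and a small correction: as you observe, $T=K_2$ satisfies $(1)$ but none of $(2)$--$(5)$, so the theorem as stated in the paper needs $|V(T)|\ne 2$ (or a convention excluding the single edge); this caveat is invisible when the result is only cited. What the citation buys the paper is brevity and the historical attribution, including the block--cutpoint construction for $(4)\Leftrightarrow(5)$, which you also (reasonably) outsource to \cite{HarPlu1967} rather than verify in detail.
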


Some components of $\mathcal{F}(T)$ have edge sets fully contained in $O_T$. Thus, the previous theorem implies such a component is a bc-tree, a fact we use repeatedly below. 

\begin{prop}\label{Claim:bctreematchingflexible}
Let $T$ be a bc-tree with unique minimum cover $C$. Then a vertex $v \in V(T)$ is saturated in every maximum matching of $T$ if and only if $v \in C$.  
\end{prop}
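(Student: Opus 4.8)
The plan is to prove both directions of the biconditional, leaning on Proposition~\ref{prop:CcoversM} and on the defining properties of bc-trees collected in Theorem~\ref{Thm:Equivbctree}. Recall that for a bc-tree $T$ we have $E(T) = M_T \cup O_T$, that the minimum cover $C$ is unique, independent, and contains no pendant vertex, and that $|C|$ equals the size of a maximum matching by K\H{o}nig--Egerv\'ary.

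For the forward direction, suppose $v \in V(T) \setminus C$; I want to produce a maximum matching of $T$ that does not saturate $v$. Since $C$ is independent and $v \notin C$, every edge incident to $v$ runs from $v$ to a vertex of $C$. Consider $T - v$: I would argue that $C$ is still a minimum cover of $T - v$ (it is certainly a cover; minimality follows because deleting $v$ cannot decrease the cover number by more than... actually one must be a little careful here, since $T-v$ may be disconnected). The cleaner route is to take a maximum matching $M$ of $T$, and use Proposition~\ref{prop:CcoversM}: every vertex of $C$ is saturated by $M$ and every edge of $M$ meets $C$ in exactly one vertex, so $|M| = |C|$ and the $C$-endpoints of the edges of $M$ are all of $C$. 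If $v$ happens to be unsaturated by $M$ we are done; if $v$ is saturated by some edge $vw \in M$ with $w \in C$, I would look for an alternating path from $v$ that lets me swap. Here is where the bc-tree structure enters: because every pendant-to-pendant distance is even (equivalently $F_T = \emptyset$), the edge $vw$ lies in $O_T$, so by definition of $O_T$ there is a maximum matching $M'$ avoiding $vw$; by Proposition~\ref{prop:CcoversM} applied to $M'$, the vertex $w \in C$ is saturated by $M'$ via some edge $wv' \ne vw$, and I claim $v$ is unsaturated by $M'$ — if it were saturated by $vw'' \in M'$ then $w'' \in C$, and $M'$ would have two edges at distance-one configurations forcing a larger structure; more directly, since $v \notin C$, any edge of $M'$ at $v$ would be the unique $M'$-edge at $v$, and one can run the standard alternating-path argument in the tree to reach a contradiction with maximality or with uniqueness of $C$. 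So some maximum matching leaves $v$ unsaturated.

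For the reverse direction, suppose $v \in C$. By Proposition~\ref{prop:CcoversM}, \emph{every} maximum matching $M$ satisfies: for each $u \in C$ there is an edge of $M$ containing $u$. Taking $u = v$ gives that $v$ is saturated by $M$, and since $M$ was an arbitrary maximum matching, $v$ is saturated in every maximum matching. This direction is essentially immediate from the cited proposition.

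I expect the main obstacle to be the forward direction: turning "$v \notin C$" into an explicit maximum matching that misses $v$. The subtlety is that "$v$ saturated in every maximum matching" is a global statement, so one cannot just locally delete an edge at $v$ — one must exhibit an alternating path or invoke the structure of $O_T$ to perform a global swap. I would handle this by inducting on the tree (deleting a pendant edge and its neighbor, or contracting, and tracking how $C$ and the thawed components behave under Proposition~\ref{Claim:maxmatchingiffrestricts}), or alternatively by directly using that $T$ is a bc-tree: root $T$ at a pendant vertex, and note the unique minimum cover $C$ consists exactly of the vertices at even... here one picks up the parity characterization from Theorem~\ref{Thm:Equivbctree}(3), so $V(T)\setminus C$ is precisely the "odd-level" vertices, and a standard greedy argument matching each odd-level vertex to one of its children (when possible) produces maximum matchings realizing every desired unsaturated vertex. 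Making that parity/levels description rigorous — in particular checking that the construction yields a matching of size $|C|$ — is the one place where a genuine (if routine) argument is needed.
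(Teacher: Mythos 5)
Your reverse direction is fine and is exactly the paper's: $v \in C$ implies $v$ is saturated in every maximum matching by Proposition~\ref{prop:CcoversM}. The forward direction, however, is a genuine gap: you sketch three different strategies (delete $v$ and track the cover number; use $vw \in O_T$ to swap to a matching avoiding $vw$; induct or do a greedy level-by-level construction after rooting at a pendant vertex), but none of them is carried through, and you say so yourself (``one can run the standard alternating-path argument \dots to reach a contradiction,'' ``making that parity/levels description rigorous \dots is the one place where a genuine \dots argument is needed''). The second route also contains an unjustified step: $F_T = \emptyset$ gives $E(T) = M_T \cup O_T$, not $E(T) = O_T$, so ``the edge $vw$ lies in $O_T$'' needs its own small argument (it is true --- an edge of $M_T$ in a graph with $F_T = \emptyset$ would force both endpoints pendant, contradicting the bc-tree parity condition --- but you do not give it). And even granting $vw \in O_T$, a maximum matching $M'$ avoiding $vw$ need not leave $v$ unsaturated, which is exactly the point you wave at and do not resolve.

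The missing idea, which is the entire content of the paper's proof, is to root $T$ at $v$ itself rather than at a pendant vertex. Since $v \notin C$ and $C$ is a cover, $N(v) \subseteq C$; since $C$ is independent and contains no pendant vertices, $C$ is precisely the set of vertices at odd height, and every leaf of $T$ sits at even height, i.e., at even distance from $v$. Now take any maximum matching $M$ saturating $v$ and extend a maximal $M$-alternating path $P$ from $v$ whose first edge is in $M$. The path cannot stall at an odd-height vertex, because every vertex of $C$ is $M$-saturated (Proposition~\ref{prop:CcoversM}), so $P$ terminates at a leaf; by the parity of the leaf's distance from $v$, the last edge of $P$ is \emph{not} in $M$, so $P$ has equally many $M$-edges and non-$M$-edges. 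Swapping along $P$ yields a matching of the same size that misses $v$, the desired contradiction. Your ``greedy argument matching each odd-level vertex to one of its children'' is aiming at the same parity structure but from the wrong root, and the step you flag as routine (verifying the construction attains size $|C|$) is precisely where the alternating-path swap is doing the work.
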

\begin{proof}
Clearly, the backwards implication is given by Proposition \ref{prop:CcoversM}. 

For the forward implication, suppose $v \in V(T)$ is saturated in every maximum matching of $T$. Moreover, by way of contradiction, suppose $v \notin C$. Since $v\notin C$ and $C$ is a cover of $T$, $N(v) \subseteq C$.  

Consider rooting $T$ at $v$. Since $T$ is a bc-tree, all leaf edges of $T$ occur at either an even distance from $v$ or all an odd distance from $v$. Moreover, assuming the root occurs at height zero, $C$ contains all vertices at odd heights. Thus, since $C$ contains no leaf edges, the leaf edges of $T$ rooted at $v$ occur at even heights. Since the height of a vertex in $T$ is the same as the distance from $v$, $v$ is an even distance from every leaf edge of $T$. 

Let $M$ be a maximum matching of $T$, and let $P$ be a maximal alternating path in $T$ with respect to edges in $M$ so that $v$ is a pendant vertex of $P$ and the edge of $P$ incident to $v$ is contained in $M$. Since every vertex at odd height is saturated in $M$, $P$ terminates at a leaf edge, $\ell$, of $T$. Since $\dist(v,\ell)$ is even, the last edge of $P$ does not belong to $M$. Thus, $M' := (M \setminus E(P)) \cup (E(P) \setminus M)$ is a matching of $T$ satisfying $|M| = |M'|$ and $v$ is unsaturated in $M'$, a contradiction. 
\end{proof}

\begin{prop}\label{Claim:FincidentToCoverVertices}
Let $T$ be a tree. If $X$ is a bc-tree component of $\mathcal{F}(T)$ containing vertex $v$ satisfying $\deg_T(v) > \deg_X(v)$, then $v$ is contained in the unique minimum cover of $X$. 
\end{prop}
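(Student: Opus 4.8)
The claim is: if $X$ is a bc-tree component of $\mathcal{F}(T)$ with a vertex $v$ satisfying $\deg_T(v) > \deg_X(v)$, then $v$ lies in the unique minimum cover $C_X$ of $X$. (The uniqueness and independence of $C_X$ come from Theorem \ref{Thm:Equivbctree}, applied to the bc-tree $X$.) The plan is to argue by contradiction: suppose $v \notin C_X$. Since $\deg_T(v) > \deg_X(v)$, there is an edge $e \in E(T) \setminus E(X)$ incident to $v$, and by the definition of $\mathcal{F}(T)$ such an edge is forbidden, i.e., $e \in F_T'$ (more precisely $e \in F_T$ and incident to an edge of $O_T$ — indeed $e$ is incident to the $O_T$-edges of $X$ at $v$). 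The goal is to use the hypothetical "freedom" of $v$ in $X$ (not being forced into the cover) to build a maximum matching of $T$ that saturates $v$ together with a neighbor through $e$-type edges, contradicting maximality.

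**Key steps.** First I would record the combinatorial consequence of $v \notin C_X$: by Proposition \ref{Claim:bctreematchingflexible} applied to $X$, $v$ is \emph{not} saturated in every maximum matching of $X$; so there is a maximum matching $M_X$ of $X$ leaving $v$ unsaturated. Next, using Proposition \ref{Claim:maxmatchingiffrestricts}, assemble a maximum matching $M$ of the whole tree $T$ whose restriction to $X$ is exactly this $M_X$ (and which restricts to an arbitrary maximum matching on every other component of $\mathcal{F}(T)$). Now $v$ is unsaturated by $M$ in $T$. Pick any edge $e = vw \in E(T)\setminus E(X)$; then $M + e$ is a matching only if $w$ is also unsaturated by $M$, which need not hold, so the real work is an augmenting-path argument: since $e \in F_T$, $e$ is in no maximum matching, so in particular $M + e$ is not obtainable by augmenting — meaning there is a maximal $M$-alternating path $P$ starting at $v$ along $e$ whose far endpoint is $M$-saturated via an edge continuing the alternation (otherwise we could augment and contradict maximality or contradict $e \in F_T$). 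Then I would show this alternating path is confined in a way that lets me flip it to get a maximum matching containing $e$, contradicting $e \in F_T$ — OR, more cleanly, I would instead flip an alternating path \emph{inside} $X$ to first reach a maximum matching of $X$ that saturates $v$ only via the "boundary", and then argue directly with the structure of $F_T'$.

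**The cleanest route.** Actually, I expect the slickest argument goes the other way around: assume $v \notin C_X$ and derive that $e \in F_T'$ is, after all, not forbidden. Here is the skeleton. Since $v \notin C_X$, $N_X(v) \subseteq C_X$, and by Proposition \ref{Claim:bctreematchingflexible} some maximum matching $M_X$ of $X$ leaves $v$ unsaturated. Extend $M_X$ (via Proposition \ref{Claim:maxmatchingiffrestricts}) to a maximum matching $M$ of $T$; then $v$ is unsaturated in $M$. Let $e = vw$ be a boundary edge, $e \notin E(X)$. If $w$ is unsaturated in $M$ then $M + e$ is a strictly larger matching — contradicting maximality. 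So $w$ is saturated in $M$, say by $f = wx \in M$; note $f \notin E(X)$ since $f$ is incident to $v$'s component only through $w \notin V(X)$... here one must be a little careful about whether $w \in V(X)$, but since $e \notin E(X)$ and $X$ is a component of $\mathcal{F}(T)$, the edge $e$ lies outside $X$, so walking from $v$ along $e$ leaves $X$; thus the alternating walk $v, e, w, f, x, \ldots$ proceeds through $E(T)\setminus E(X)$, all of whose edges incident to the walk near $v$ are forbidden. Continue the maximal $M$-alternating path $P = v, e, w, f, x, \ldots$; by maximality of $M$ and the tree structure, $P$ is a path (no cycles in a tree) and must terminate at an $M$-saturated vertex via a non-$M$ edge (else augmenting along $P$ enlarges $M$). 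But then $M \triangle E(P)$ is a maximum matching of $T$ containing $e$, so $e \notin F_T$ — contradicting $e \in F_T'$. Hence $v \in C_X$.

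**Main obstacle.** The delicate point — and where I'd spend the most care — is the termination analysis of the alternating path $P$: ruling out that $P$ is itself augmenting (which is immediate from $M$ being maximum, giving no $M$-augmenting path), and confirming that the symmetric difference $M \triangle E(P)$ is genuinely a matching of the \emph{same} size that now uses $e$. In a tree this is standard (alternating paths are genuine paths, and flipping an even-length alternating path between two saturated endpoints preserves size), but one must verify the parity: $P$ starts at the $M$-unsaturated vertex $v$ with a non-$M$ edge $e$, so the first edge is out of $M$; for $M \triangle E(P)$ to be a matching of the same cardinality with $e$ now included, $P$ should have even length and end at an $M$-saturated vertex whose $M$-edge lies on $P$ — and this is exactly forced by maximality of $M$. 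Assembling these parity bookkeeping details cleanly, and making sure the "boundary edges at $v$ are forbidden" fact (from the definition of $\mathcal{F}(T)$ together with $\deg_T(v) > \deg_X(v)$) is what launches the contradiction, is the crux.
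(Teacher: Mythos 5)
Your proposal is correct and follows essentially the same route as the paper: assume $v\notin C_X$, use Proposition \ref{Claim:bctreematchingflexible} to get a maximum matching of $X$ missing $v$, extend it to a maximum matching of $T$ via Proposition \ref{Claim:maxmatchingiffrestricts}, observe the other endpoint of the boundary edge $e$ must be saturated, and swap to produce a maximum matching containing $e$, contradicting $e\in F_T$. The only difference is that your full alternating-path flip is unnecessary: the paper simply exchanges the single matched edge at $u$ for $e$ (the length-two prefix of your path $P$ already does the job).
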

\begin{proof}
Since $X$ is a bc-tree, $X$ contains a unique minimum cover, $C \subseteq V(X)$, and Proposition \ref{Claim:incidenttoMT} gives that $|E(X)| > 1$, so $|C| > 0$. Suppose $v \in V(X)$ with $\deg_T(v) > \deg_X(v)$. Proposition \ref{Claim:incidenttoMT} gives the existence of edge $vu = e \in E(T)$ with $e \in F_T$. By way of contradiction, suppose $v$ is not in the unique minimum cover of $X$. 

By Proposition \ref{Claim:bctreematchingflexible}, there exists a maximum matching $M_X$ of $X$ which leaves $v$ unsaturated. Moreover, Proposition \ref{Claim:maxmatchingiffrestricts} implies there exists a maximum matching $M_T$ of $T$ so that $M_T \cap E(X) = M_X$. 

Since $e \notin M_T$ and $v$ is unsaturated in $M_T$, $u$ is saturated in $M_T$, as otherwise would imply $M_T \cup \{e\}$ is a matching of $T$. Let $e'$ be the edge of $M_T$ incident to $u$. Then $(M_T\setminus \{e'\}) \cup \{e\}$ is a maximum matching of $T$, contradicting that $e \in F_T$, completing the proof. 
\end{proof}

\begin{lemma}\label{lem:perfectmatchingvertexpartition}
Let $T$ be a tree with a perfect matching. Then $T$ contains two minimum covers, $C_1$ and $C_2$ which partition $V(T)$. 
\end{lemma}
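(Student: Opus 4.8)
The plan is to use the bipartition of $T$ directly. Since $T$ is a tree, it is bipartite; write $V(T) = A \sqcup B$ for its bipartition into independent sets. The first observation is that each of $A$ and $B$ is a vertex cover of $T$: in a bipartite graph every edge has exactly one endpoint in each class, so every edge meets $A$ and every edge meets $B$ --- equivalently, $A$ is a cover because its complement $B$ is independent, and symmetrically for $B$. These two sets obviously partition $V(T)$, so it remains only to check that both are \emph{minimum} covers.

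For this I would count. Let $n = |V(T)|$ and let $M$ be a perfect matching of $T$. Every edge of $M$, being an edge of the bipartite graph $T$, joins a vertex of $A$ to a vertex of $B$; since $M$ saturates every vertex, $M$ induces a bijection between $A$ and $B$, whence $|A| = |B| = n/2$. On the other hand, every matching of $T$ has at most $n/2$ edges, so $M$ is a maximum matching and the matching number of $T$ is $n/2$. By the K\H{o}nig--Egerv\'{a}ry Theorem (already invoked in the proof of Proposition \ref{prop:CcoversM}), the minimum vertex cover number of $T$ equals its matching number, namely $n/2$. Since $A$ and $B$ are covers of size $n/2$, they are both minimum covers.

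Taking $C_1 = A$ and $C_2 = B$ then gives two minimum covers partitioning $V(T)$, as required. The only step that requires any care is the equality $|A| = |B|$, and that is precisely where the hypothesis that $T$ has a perfect matching (rather than merely being a tree) enters: without it, the bipartition classes can have different sizes and need not both be minimum covers.
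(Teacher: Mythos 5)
Your proof is correct and is essentially the paper's own argument: the paper takes $C_1,C_2$ to be the odd- and even-height vertices after rooting $T$, which is exactly the bipartition $A\sqcup B$ you use, and both proofs conclude minimality by observing that the perfect matching pairs the two classes so each has size equal to the matching number. The only cosmetic difference is that you invoke the full K\H{o}nig--Egerv\'{a}ry equality, whereas the paper only needs the trivial inequality $|C|\geq |M|$ for any cover $C$ and matching $M$; either way the argument is sound.
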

\begin{proof}
Let $M$ be a perfect matching of $T$. Root $T$ at vertex $v$. Let $C_1$ be the vertices of $T$ at odd heights of the tree, and $C_2$ the vertices of $T$ at even heights. Clearly, $C_1$ and $C_2$ are vertex covers of $T$, since every edge of $T$ contains one vertex from each height parity, so $|C_1|, |C_2| \geq |M|$. On the other hand, since every edge of $M$ contains one vertex from each height parity class, no edge of $M$ has both vertices in either cover, so $|C_1|, |C_2| \leq |M|$. Therefore, $|C_1| = |C_2| = |M|$, so both covers are minimum covers. The observation that $C_1$ and $C_2$ partitions $V(T)$ is clear. 
\end{proof}

\begin{theorem}\label{Thm:Frozenandthawed}
Let $T$ be a tree
\begin{enumerate}
\item $v \in V(T)$ is cover-frozen if and only if $v$ is not incident to any edges of $M_T$. 
\item $v \in V(T)$ is cover-thawed if and only if every edge incident to $v$ is matching-frozen. 
\item $e \in E(T)$ is matching-thawed if and only if $e$ is incident to two cover-frozen vertices, one in every minimum cover and the other omitted from every minimum cover. 
\item $e \in E(T) \setminus F_T'$ is matching-frozen if and only if $e$ is incident to two cover-thawed vertices. 
\end{enumerate}
\end{theorem}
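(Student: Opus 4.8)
The plan is to prove Part 1 first --- it is the only part that genuinely invokes the structural results of this section --- and then read off Parts 2--4 from it together with Propositions~\ref{prop:CcoversM} and~\ref{Claim:incidenttoMT}. Two facts are used throughout. By the remark following Proposition~\ref{Claim:incidenttoMT}, every component of $\mathcal{F}(T)$ is either a bc-tree all of whose edges lie in $O_T$, or a tree with a perfect matching all of whose edges lie in $M_T \cup F_T$. By Proposition~\ref{Claim:maxmatchingiffrestricts}, a set $C \subseteq V(T)$ is a minimum cover of $T$ exactly when it meets each component of $\mathcal{F}(T)$ in a minimum cover of that component; so the minimum covers of $T$ are precisely the disjoint unions, over the components, of minimum covers of the individual components. (Since $M_T$- and $O_T$-edges always survive into $\mathcal{F}(T)$ while $F_T'$-edges do not, Proposition~\ref{Claim:incidenttoMT} also shows $\mathcal{F}(T)$ has no isolated vertices.)

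For Part 1, first take $v$ incident to no $M_T$-edge. Then by Proposition~\ref{Claim:incidenttoMT}, $v$ is incident to at least one $O_T$-edge and every other edge at $v$ lies in $F_T$, hence in $F_T'$ (being incident to an $O_T$-edge at $v$); so the surviving edges at $v$ in $\mathcal{F}(T)$ are exactly its $O_T$-edges, and the component $X$ of $\mathcal{F}(T)$ through $v$ is a bc-tree with a \emph{unique} minimum cover $C_X$ (Theorem~\ref{Thm:Equivbctree}). Every minimum cover of $T$ meets $V(X)$ in $C_X$, so $v$ lies in all of them or in none, i.e. $v$ is cover-frozen. Conversely, if $v$ is incident to $e = vw \in M_T$, the component $X$ of $\mathcal{F}(T)$ through $e$ is of perfect-matching type, so Lemma~\ref{lem:perfectmatchingvertexpartition} provides minimum covers $C_1, C_2$ of $X$ partitioning $V(X)$, with $v$ in exactly one, say $C_1$. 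Appending $C_1$, respectively $C_2$, to one fixed choice of minimum cover on each other component yields two minimum covers of $T$, one containing $v$ and one not, so $v$ is cover-thawed.

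Parts 2--4 then follow mechanically. Part 2: $v$ is cover-thawed iff (Part 1) $v$ meets $M_T$ iff (Proposition~\ref{Claim:incidenttoMT}, which forbids a vertex from meeting only $F_T$-edges) every edge at $v$ lies in $M_T \cup F_T$, i.e. is matching-frozen. Part 3 (``matching-thawed'' means $e = vu \in O_T$): if $e \in O_T$ then neither endpoint meets $M_T$, so both are cover-frozen by Part 1, and choosing a maximum matching $M \ni e$ and applying Proposition~\ref{prop:CcoversM} forces $|C \cap \{v,u\}| = 1$ for every minimum cover $C$, so one endpoint lies in every minimum cover and the other in none; conversely, if one endpoint of $e$ lies in every minimum cover and the other, $u$, in none, then (Part 1) neither meets $M_T$, so $e \notin M_T$ and $u$ meets an $O_T$-edge (Proposition~\ref{Claim:incidenttoMT}), whence $e \in F_T$ would be incident to that $O_T$-edge, hence lie in $F_T'$ and not survive in $\mathcal{F}(T)$, and Proposition~\ref{Claim:FincidentToCoverVertices} would then force $u$ into every minimum cover --- a contradiction, so $e \in O_T$. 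Part 4: for $e \in E(T) \setminus F_T'$, matching-frozen means $e \in M_T \cup (F_T \setminus F_T')$; in the first case both endpoints meet $M_T$, in the second neither endpoint meets an $O_T$-edge so (Proposition~\ref{Claim:incidenttoMT}) both meet $M_T$-edges, so by Part 1 both endpoints are cover-thawed, whereas if $e \in O_T$ each endpoint meets an $O_T$-edge, hence no $M_T$-edge, hence is cover-frozen.

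The crux is Part 1, specifically its ``cover-thawed'' half: one must build two minimum covers of the \emph{whole} tree distinguishing $v$, the only handle being the single perfect-matching component of $\mathcal{F}(T)$ through the $M_T$-edge at $v$. This works because minimum covers of $T$ decompose freely into independent local choices on the components of $\mathcal{F}(T)$ (Proposition~\ref{Claim:maxmatchingiffrestricts}), so the two covers from Lemma~\ref{lem:perfectmatchingvertexpartition} extend regardless of the choices elsewhere. The main recurring hazard --- most acute in Parts 3--4 --- is keeping the trichotomy $O_T$, $F_T \setminus F_T'$, $F_T'$ straight and tracking which edge types survive into $\mathcal{F}(T)$; a careless case analysis there is the likeliest source of error.
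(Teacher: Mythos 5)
Your proof is correct and follows essentially the same route as the paper's: both rest on the component dichotomy for $\mathcal{F}(T)$, Proposition~\ref{Claim:maxmatchingiffrestricts} to localize minimum covers, Theorem~\ref{Thm:Equivbctree} for uniqueness on bc-tree components, and Lemma~\ref{lem:perfectmatchingvertexpartition} for the two complementary covers on perfectly matched components. The only differences are organizational (you prove Part~1 in full and derive Parts~2--4 from it, and in Part~3 you use Proposition~\ref{prop:CcoversM} where the paper uses independence of the bc-tree cover), which do not change the substance of the argument.
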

\begin{proof}
We start with the proof of (1). 

$(\Rightarrow)$: If $v \in V(T)$ is cover-frozen, then $v$ is contained in a component of $\mathcal{F}(T)$ which is a bc-tree, by the remark after Proposition \ref{Claim:incidenttoMT} combined with Lemma \ref{lem:perfectmatchingvertexpartition}.  Another application of Proposition \ref{Claim:incidenttoMT} completes the argument.

$(\Leftarrow)$: If $v$ is not incident to any edges of $M_T$, then again, $v$ is contained in a component of $\mathcal{F}(T)$ which is a bc-tree. Proposition \ref{Claim:maxmatchingiffrestricts} completes the proof.

Note that (2) is merely the contrapositive of (1) since cover-thawed and cover-frozen vertices partition $V(T)$, and Proposition \ref{Claim:incidenttoMT} implies the edges of $T$ incident to a vertex $v \in V(T)$ are either (1) some in $O_T$ and some in $F_T$, or (2) one in $M_T$ and the rest in $F_T$. Thus, a vertex $v$ being incident to an edge of $M_T$ implies the collection of edges incident to $v$ fall into type (2), meaning every edge incident to $v$ is matching-frozen. \\ 

Now we prove (3). 

$(\Rightarrow)$: If $e \in E(T)$ is matching-thawed, then $e$ is contained in a component of $\mathcal{F}(T)$ which is a bc-tree. Proposition \ref{Claim:maxmatchingiffrestricts} completes the proof, noting that the minimum cover of a bc-tree is independent.

$(\Leftarrow)$: We prove this direction by contraposition. If $e$ is incident to two cover-frozen vertices, then there are two cases. Either $e$ is contained in a component of $\mathcal{F}(T)$ which is a bc-tree, in which case the desired result holds, or $e \in F_T'$. If $e \in F_T'$, clearly $e$ is matching-frozen. By the definition of $F_T'$, at least one endpoint of $e$ is incident to edges of $O_T$. Without loss of generality, suppose $e = xy$ and $x$ is incident to edges of $O_T$. Then $x$ is contained in a component of $\mathcal{F}(T)$ which is a bc-tree, so Proposition \ref{Claim:FincidentToCoverVertices} gives that $x$ is a cover vertex. If $y$ is incident to edges of $O_T$, then $y$ is also a cover vertex. On the other hand, if $y$ is not incident to vertices of $O_T$, then $y$ is contained in a component of $\mathcal{F}(T)$ which has a perfect matching. Lemma \ref{lem:perfectmatchingvertexpartition} and Proposition \ref{Claim:maxmatchingiffrestricts} together imply $y$ is not cover-frozen, providing a contradiction. \\

Now we prove (4). 

$(\Rightarrow)$: Suppose $e \in E(T) \setminus F_T'$ is cover-frozen. Then $e$ is contained in a component of $\mathcal{F}(T)$ which has a perfect matching. Thus, Lemma \ref{lem:perfectmatchingvertexpartition} and Proposition \ref{Claim:maxmatchingiffrestricts} together imply the endpoints of $e$ are cover-thawed. 

$(\Leftarrow)$: Now suppose $e \in E(T)$ is incident to two cover-thawed vertices. By Proposition \ref{Claim:FincidentToCoverVertices}, $e \notin F_T'$, so $e$ is contained in a component of $\mathcal{F}(T)$ which has a perfect matching. Thus, Lemma \ref{lem:perfectmatchingvertexpartition} and Proposition \ref{Claim:maxmatchingiffrestricts} together imply the endpoints of $e$ are cover-thawed. 
\end{proof}

\begin{figure}[h]
\centering
\includegraphics[width=4in]{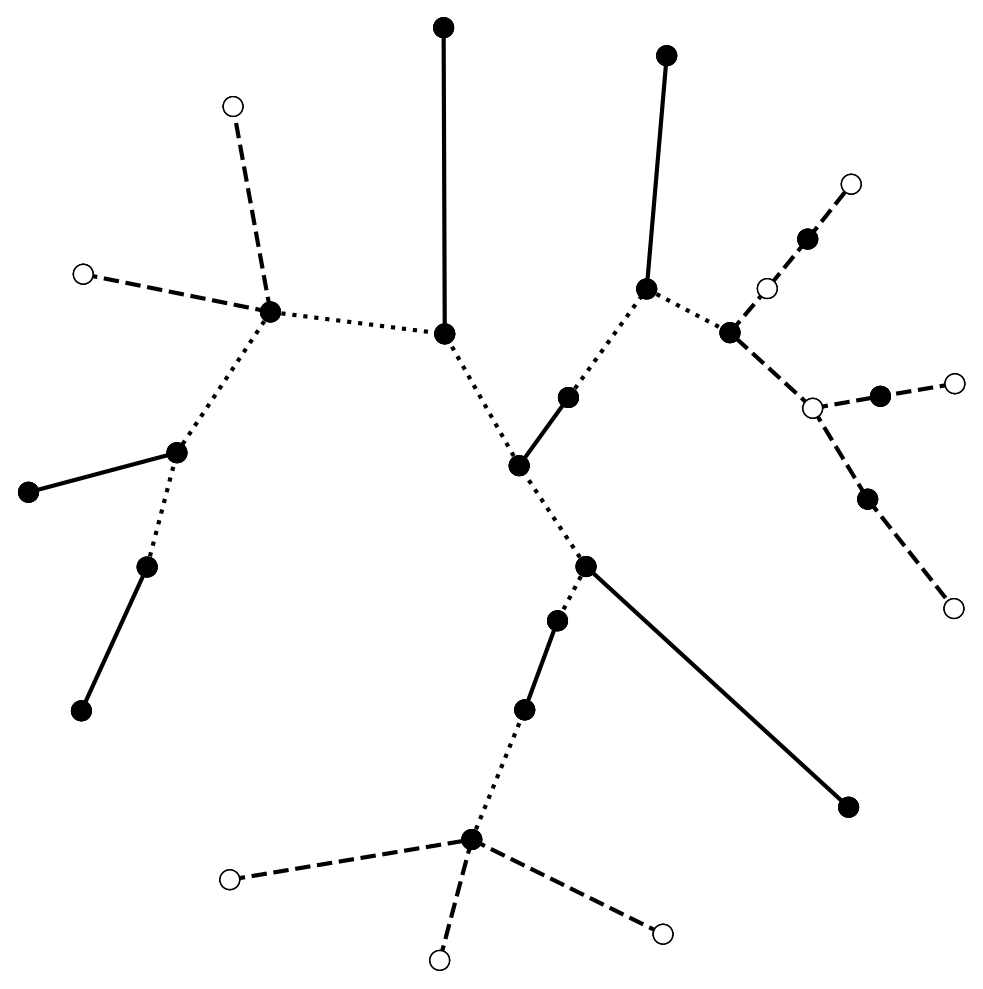}
\caption{A uniformly random tree $T$ on 30 vertices with thermal decomposition: solid = mandatory ($M_T$), dashed = optional ($O_T$), dotted = forbidden ($F_T$).  Filled vertices are the generating set.}
\label{fig:fig1}
\end{figure}

\subsection{Skew Zero Forcing} \label{sec:SZFintro}

\begin{definition}\label{Def:SkewZeroForcing}
\cite{IMAISU2010} Let $G = (V , E)$ be a graph.
\begin{itemize}
\item A subset $S \subseteq V$ defines an initial coloring by filling all vertices of $S$ and leaving all the vertices
not in $S$ unfilled.
\item The skew zero forcing rule says: If a vertex $v \in V$ has exactly one unfilled neighbor, $w$, change the
color of $w$ to filled. In this case we say that $v$ forces $w$.
\item The skew derived set of an initial filled set $S$ is the result of applying the skew zero forcing rule
until no more changes are possible. 
\end{itemize}
\end{definition}

Given a graph $G$ and $S \subseteq V(G)$, define a \textit{skew zero forcing closure} of $S$ (SZF-closure of $S$) to be a skew derived set of the initial coloring $S$. 

\begin{prop}\label{Prop:SkewZeroWellDefined}
Let $G$ be a graph and $S\subseteq V(G)$. Then if $S_1$ and $S_2$ are skew derived sets of $S$, then $S_1 = S_2$. 
\end{prop}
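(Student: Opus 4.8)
The plan is to prove the stronger statement that \emph{every} set reachable from $S$ by skew forcing is contained in \emph{any} stalled set reachable from $S$; uniqueness then follows by symmetry. Write $S_1$ as the endpoint of a chain $S = B_0 \subsetneq B_1 \subsetneq \cdots \subsetneq B_k = S_1$ in which each step $B_i \to B_{i+1}$ adjoins a single vertex via the skew forcing rule (such a chain is automatically finite, since it strictly increases inside the finite set $V(G)$), and recall that $S_1$, being a skew derived set, is stalled. I will show by induction on $i$ that $B_i \subseteq S_2$ for all $i$.

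The base case is $B_0 = S \subseteq S_2$. For the inductive step, assume $B_i \subseteq S_2$ and let the force $B_i \to B_{i+1} = B_i \cup \{w\}$ be witnessed by a vertex $v$ whose unique unfilled neighbor with respect to the coloring $B_i$ is $w$; equivalently $N(v) \setminus \{w\} \subseteq B_i$. By the inductive hypothesis $N(v) \setminus \{w\} \subseteq S_2$. Now either $w \in S_2$, in which case $B_{i+1} = B_i \cup \{w\} \subseteq S_2$ and we are done; or $w \notin S_2$, in which case, with respect to the coloring $S_2$, the vertex $v$ has exactly one unfilled neighbor, namely $w$. But then the skew forcing rule applies to $S_2$, contradicting that $S_2$ is stalled. (It is immaterial whether $v$ itself belongs to $B_i$ or to $S_2$, since the skew rule does not require the forcing vertex to be filled.) Hence $w \in S_2$, completing the induction.

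Taking $i = k$ yields $S_1 \subseteq S_2$, and exchanging the roles of $S_1$ and $S_2$ gives the reverse inclusion, so $S_1 = S_2$.

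I do not anticipate a genuine obstacle. The one subtle point is that a force which is legal from $B_i$ need not remain legal from the larger coloring $S_2$ — the vertex $v$ could end up with \emph{no} unfilled neighbor there. But that degenerate case is exactly the situation $w \in S_2$, which is precisely the conclusion we want; the only alternative ($v$ still has exactly one unfilled neighbor) is ruled out by stalledness of $S_2$. Everything else is routine bookkeeping on a finite strictly increasing chain of subsets of $V(G)$.
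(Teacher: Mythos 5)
Your proof is correct and follows essentially the same route as the paper's: induct along the chain of forces producing $S_1$, observing that once $v$'s neighbors other than $w$ are all filled in $S_2$, the vertex $w$ must be in $S_2$ as well. Your phrasing of the inductive step as a direct contradiction with the stalledness of $S_2$ is a slight streamlining of the paper's ``$w$ is eventually added'' argument, but the underlying idea is identical.
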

\begin{proof}
Suppose $S_1$ and $S_2$ are two closures of $S$ so that $x \in S_1$. Since $S \subseteq S_1\cap S_2$ it suffices to assume $x \notin S$. 

If there exists a vertex $v \in V(G)$ so that $x \in N(v)$ and $N(v) \setminus S = \{x\}$, then $x \in S_2$, as no sequence of the skew zero forcing rule can increase the number of unfilled neighbors of $v$. 

If this is not the case, there exist sequences of vertices $\{v_i\}_{i=1}^t, \{x_i\}_{i=1}^t \subseteq V(G)$ so that $v_i$ forces $x_i$ for $1 \leq i\leq t$ and then $v_{t+1}:=v$ forces $x_{t+1} := x$ in $S_1$. By the previous paragraph, $x_1 \in S_2$, since $v_1$ has the property that $x_1 \in N(v_1)$ and $N(v_1) \setminus S = \{x_1\}$. Let $S^1 = S\cup \{x_1\}$, so that $S^1 \subseteq S_2$. Recursively define $S^i = S^{i-1} \cup \{x_i\}$ for $2 \leq i \leq n+1$. Suppose also that there exists $k \in \mathbb{Z}$ so that $1 \leq k \leq n$ and $S^k \subseteq S_2$. Then $x_{k+1} \in S_2$, since the construction of $S_1$ gives that $v_{k+1}$ has the property that $x_{k+1} \in N(v_{k+1})$ and $N(v_{k+1}) \setminus S^k = \{x_{k+1}\}$, so that $x_{k+1}$ is eventually added to $S_2$ if it is not already an element of $S^k \subseteq S_2$. 
\end{proof}

Thus, {\it the} skew zero forcing closure of a set $S \subseteq V(G)$ is well-defined.  We denote it by $\overline{S}$.  If $S = \overline{S}$, then we say that $S$ is skew zero forcing {\it closed}, which some literature refers to as ``stalled''.  This is a meaningful term for any closure operator; see the beginning of Section \ref{Sec:Matroids} for more.  We sometimes refer to any set $S \subseteq V(G)$ so that $\overline{S} = V(G)$ as a ``skew zero forcing set''.

\begin{prop}\label{Prop:SkewZeroClosure}
Let $G$ be a graph. If $A, B\subseteq V(G)$ so that $A\subseteq B$, then $\overline{A} \subseteq \overline{B}$. 
\end{prop}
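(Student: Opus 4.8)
The plan is to fix a concrete forcing sequence witnessing $\overline{A}$ and push it through $\overline{B}$ one force at a time. By Definition \ref{Def:SkewZeroForcing} together with Proposition \ref{Prop:SkewZeroWellDefined}, there is a chain $A = A_0 \subseteq A_1 \subseteq \cdots \subseteq A_k = \overline{A}$ in which, for each $i$, $A_{i+1} = A_i \cup \{w_i\}$ for some vertex $w_i$ forced by some vertex $v_i$; that is, $N(v_i) \setminus A_i = \{w_i\}$. I will prove by induction on $i$ that $A_i \subseteq \overline{B}$, which for $i = k$ gives $\overline{A} \subseteq \overline{B}$.

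The base case is immediate, since $A_0 = A \subseteq B \subseteq \overline{B}$. For the inductive step, assume $A_i \subseteq \overline{B}$ and aim to show $w_i \in \overline{B}$. If $w_i \in \overline{B}$ there is nothing to prove, so suppose $w_i \notin \overline{B}$. Because $A_i \subseteq \overline{B}$, we have $N(v_i) \setminus \overline{B} \subseteq N(v_i) \setminus A_i = \{w_i\}$, and since $w_i \notin \overline{B}$ this inclusion is an equality, so $v_i$ has exactly one unfilled neighbor with respect to the coloring $\overline{B}$, namely $w_i$. Then the skew zero forcing rule allows $v_i$ to force $w_i$, contradicting that $\overline{B}$ is skew zero forcing closed (no change is possible from a derived set). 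Hence $w_i \in \overline{B}$, so $A_{i+1} = A_i \cup \{w_i\} \subseteq \overline{B}$, completing the induction.

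The only step that requires genuine care — and the one I would expect a casual argument to mishandle — is the passage from ``$v_i$ forces $w_i$ with respect to $A_i$'' to ``$v_i$ forces $w_i$ with respect to the larger set $\overline{B}$'': enlarging the filled set can only delete unfilled neighbors of $v_i$, so its count of unfilled neighbors drops from exactly one to either one or zero, and the zero case is precisely the already-handled case $w_i \in \overline{B}$. No part of the argument is a serious obstacle, but this monotonicity-versus-``exactly one'' subtlety is the point worth spelling out, since it is exactly what separates skew (and ordinary) zero forcing from a purely additive infection process where monotonicity would be trivial.
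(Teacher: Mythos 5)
Your proof is correct and follows essentially the same route as the paper's: induct along the forcing sequence that builds $\overline{A}$ from $A$, showing each forced vertex already lies in $\overline{B}$ because its forcing vertex has at most one neighbor outside $\overline{B}$. The only cosmetic difference is that you phrase the inductive step as a contradiction with the closedness of $\overline{B}$, whereas the paper concludes membership directly; the substance is identical.
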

\begin{proof}
Let $v_1,\dotsc,v_t \in V(G)$ be the collection of vertices, in order of inclusion via repeated application of the skew zero forcing rule, in $\overline{A} \setminus A$. Further, let $u_1, \dotsc, u_t \in V(G)$ so that the skew zero forcing rule applied to $u_i$ resulted in the inclusion of vertex $v_i$. By induction, we show that $\{v_i\}_{i=1}^t \subseteq \overline{B}$. 

Let $j \in \mathbb{Z}$ so that $1 \leq j\leq t$. If $j = 1$, then all neighbors of $u_1$ except $v_1$ are contained in $A$. Since $A\subseteq B$, all neighbors of $u_1$ except $v_1$ are in $B$. Regardless of whether $v_1$ is contained in $B$ or not, it is clear that $A \cup \{v_i\}_{i=1}^1 \subseteq \overline{B}$. Now suppose there exists $k \in \mathbb{Z}$ so that $k\geq 1$ and $A \cup \{v_i\}_{i=1}^k \subseteq \overline{B}$. We consider $v_{k+1}$. By the definition of the skew zero forcing rule, all neighbors of $u_{k+1}$ except $v_{k+1}$ are contained in $A \cup \{v_i\}_{i=1}^k$. By the induction hypothesis, $A \cup \{v_i\}_{i=1}^k \subseteq \overline{B}$, so all neighbors of $u_{k+1}$ except (possibly) $v_{k+1}$ are contained in $\overline{B}$. Thus, $v_{k+1} \in \overline{B}$, completing the proof of the desired inclusion. 
\end{proof}

\begin{prop}\label{Prop:SkewZeroClosedGeneratenullvectors}
Let $T$ be a tree. The set $S \subseteq V(T)$ is skew zero forcing closed if and only if there exists $\mathbf{x} \in \ker(T)$ so that the zeros of $\mathbf{x}$ occur exactly at the vertices of $S$. 
\end{prop}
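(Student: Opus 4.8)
The $(\Leftarrow)$ direction I would dispatch first, as it is short and in fact holds for any graph: if $\mathbf{x}\in\ker(T)$ with $Z(\mathbf{x})=S$ and some vertex $v$ had exactly one unfilled neighbour $w$ relative to the coloring $S$, then $x_u=0$ for all $u\in N(v)\setminus\{w\}$ while $x_w\neq 0$, so the $v$-entry of $A(T)\mathbf{x}$ would be $\sum_{u\sim v}x_u=x_w\neq 0$, contradicting $\mathbf{x}\in\ker(T)$; hence no SZF rule applies to $S$, so $S=\overline{S}$.

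For $(\Rightarrow)$ the plan is induction on $|V(T)|$, with $|V(T)|\le 2$ handled directly (when $|V(T)|=2$ the only SZF-closed set is $V(T)$, realized by $\mathbf{0}$). The preliminary observation driving the induction is that \emph{the neighbour of every leaf lies in $S$}: if $\ell$ is a leaf with neighbour $p\notin S$, then $\{p\}=N(\ell)$ is a lone unfilled neighbour of $\ell$, so $\ell$ forces $p$ and $S$ is not SZF-closed. Now take $|V(T)|\ge 3$ and $S$ SZF-closed, fix a leaf $\ell$ with neighbour $p$ (so $p\in S$), note that $\deg_T(p)=k+1$ with $k\ge 1$ since $|V(T)|\ge 3$, and write $N(p)=\{\ell,q_1,\dots,q_k\}$. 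Deleting $\ell$ and $p$ leaves a forest whose components are subtrees $T_1,\dots,T_k$ with $q_i\in V(T_i)$, each of order $<|V(T)|$.

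I would then check that $S_i:=S\cap V(T_i)$ is SZF-closed in $T_i$: for $v\in V(T_i)\setminus\{q_i\}$ we have $N_{T_i}(v)=N_T(v)$, and for $v=q_i$ we have $N_T(q_i)=N_{T_i}(q_i)\cup\{p\}$ with $p\in S$, so in either case the number of unfilled neighbours is unchanged and hence $\neq 1$. By induction pick $\mathbf{y}^{(i)}\in\ker(T_i)$ with $Z(\mathbf{y}^{(i)})=S_i$. For scalars $c_1,\dots,c_k\in\mathbb{C}^\ast$ to be chosen, define $\mathbf{x}\in\mathbb{C}^{V(T)}$ by $x_v=c_iy^{(i)}_v$ for $v\in V(T_i)$, $x_p=0$, and $x_\ell=-\sum_{i=1}^k c_iy^{(i)}_{q_i}$. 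Checking the four kinds of rows of $A(T)$ --- vertices interior to some $T_i$, the vertices $q_i$ (where the only new edge is $q_ip$ and $x_p=0$), the vertex $p$ (where $\sum_{w\sim p}x_w=x_\ell+\sum_i c_iy^{(i)}_{q_i}=0$), and $\ell$ (where the sum is $x_p=0$) --- shows $\mathbf{x}\in\ker(T)$; and for $v\neq\ell$ we have $x_v=0$ iff $v\in S$.

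It remains to choose the $c_i$ so that $x_\ell=0$ exactly when $\ell\in S$. Set $a_i:=y^{(i)}_{q_i}$, so $a_i\neq 0$ iff $q_i\notin S$. If $\ell\notin S$, then $\ell$ is an unfilled neighbour of $p$, so $p$ has at least two such (as $S$ is SZF-closed), forcing some $q_i\notin S$, i.e.\ some $a_i\neq 0$; then $\{(c_i):\sum_i c_ia_i=0\}$ is a proper subspace of $\mathbb{C}^k$, so some $(c_i)\in(\mathbb{C}^\ast)^k$ lies off it and gives $x_\ell=-\sum_i c_ia_i\neq 0$. If $\ell\in S$, then the unfilled neighbours of $p$ are exactly the $q_i\notin S$, so $\#\{i:a_i\neq 0\}\neq 1$; and one checks that for any tuple with either no nonzero entry or at least two, there is $(c_i)\in(\mathbb{C}^\ast)^k$ with $\sum_i c_ia_i=0$ (take all $c_i=1$ in the first case; if $a_1,a_2\neq 0$, fix $c_3,\dots,c_k$ arbitrarily and solve a linear equation for $(c_1,c_2)$ while staying off both coordinate axes), giving $x_\ell=0$. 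In all cases $Z(\mathbf{x})=S$, closing the induction.

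The main obstacle, and the reason a naive induction fails, is that one cannot simply delete the leaf $\ell$: that alters the equation at $p$, and any extension of a nullvector of $T-\ell$ would be forced to have $x_\ell=0$, so zero loci with $\ell\notin S$ could never arise. Deleting the pair $\{\ell,p\}$ instead decouples the equation at $p$, letting it ``absorb'' $\ell$ through $x_\ell=-\sum_i x_{q_i}$; after that the remaining work is the routine bookkeeping that $S\cap V(T_i)$ stays SZF-closed and the elementary genericity step choosing the $c_i$.
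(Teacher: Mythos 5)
Your proof is correct, but it takes a genuinely different route from the paper's. The paper proves the forward direction by a single top-down pass: it roots $T$ at a leaf, walks down the tree by height, and at each internal vertex $v$ with parent $w$ uses SZF-closedness to guarantee either no unfilled children (if all of $N(v)$ lies in $S$) or enough unfilled children to distribute explicit values (such as $1$ and $-1/(m-1)$, or $-x_w/m$) so that the equation at $v$ holds; no induction on $|V(T)|$ and no genericity argument is needed, since the values are chosen on the fly with full knowledge of the parent's value. You instead induct on $|V(T)|$, delete the pendant pair $\{\ell,p\}$, recurse on the resulting components $T_1,\dots,T_k$ (whose traces of $S$ you correctly verify remain SZF-closed), and glue the component nullvectors back together with nonzero scalars $c_i$ chosen generically so that $x_\ell=-\sum_i c_i y^{(i)}_{q_i}$ vanishes exactly when $\ell\in S$; the SZF-closedness at $p$ is what guarantees the relevant count of nonzero $a_i$ is never exactly one, which is precisely what makes the scalar choice possible in both cases. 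Your decomposition has the advantage of being modular (the pendant-pair deletion is the same move the paper uses later for its UPM theorem, and the genericity step is the same trick as in Proposition \ref{Prop:RealizableClosedUnderFiniteIntersection}), while the paper's construction is more hands-on and produces an explicit nullvector in one pass without any existence argument for the scalars. Both hinge on the same structural fact --- that SZF-closedness forbids exactly one unfilled neighbor at every vertex --- applied at the parent versus at the deleted neighbor $p$, respectively.
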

\begin{proof}
$(\Leftarrow)$: This implication is clear. 
$(\Rightarrow)$: Let $S \subseteq V(T)$ be a skew zero forcing closed set. Consider rooting $T$ at a leaf edge $\ell$. We construct a nullvector $\mathbf{x}$ with entries $x_v$ for $v \in V(T)$ by iteratively working through the tree $T$. Start by assigning zeros to coordinates corresponding to vertices of $S$, so let $x_v = 0$ for each $v \in S$. Now we choose values for all nonzero coordinates of $\mathbf{x}$. 

If $\ell \notin S$, let $x_\ell = 1$. Furthermore, let $v_1$ be the unique neighbor of $\ell$ (note that $v_1$ is the only vertex of $T$ at height one). Since $S$ is skew zero forcing closed, $v_1 \in S$, so $x_{v_1} = 0$. 

Let $v \in V(T)$ be at height $h \geq 2$, so that $v$ is not a pendant vertex, and if $u \in V(T)$ is at height less than $h$, $x_u$ has already been assigned. Let $w$ be the unique neighbor of $v$ at height $h-1$. If $x_w = 0$, then either all neighbors of $v$ are in $S$ or at least two are not. If the former is the case, $x_u = 0$ for all neighbors $u$ of $v$ at height $h+1$. If the latter is the case, let $N(v) \setminus S = \{u_1, u_2, \dotsc, u_m\}$. Then, define $x_{u_1} = 1$ and $x_{u_i} = -1/(m-1)$ for $2 \leq i\leq m$. In either case, we have that $\sum_{w : vw \in E(T)} x_w = 0$. 

The other case considers if $x_w\neq 0$. In this case, we note that $x_w$ has already been assigned. Let $N(v) \setminus S = \{u_1, u_2, \dotsc, u_m\}$ and define $x_{u_i} = -x_w / m$. Thus $\sum_{w : vw \in E(T)} x_w = 0$. 

Note that using this recursive algorithm, it is possible to populate all entries of $\mathbf{x}$. Moreover, the above argument shows $\sum_{w : vw \in E(T)} x_w = 0$ for all vertices $v$ of $T$ except pendant vertices. Since pendant vertices have a unique neighbor and $S$ is skew zero forcing closed, we have that $\sum_{w : vw \in E(T)} x_w = 0$ for all pendant vertices $v \in V(T)$ as well, completing the proof. 
\end{proof}

Note that the backwards implication above holds for any graph, not just trees: if $S \subseteq V(G)$ is the zero locus of some nullvector $\mathbf{v}$, then $S$ is SZF-closed.  Indeed, if any vertex $x$ had exactly one neighbor $y$ for which $v_y \neq 0$, i.e., $y \not \in S$, then the $x$ coordinate of $A(G) \mathbf{v}$ would also be nonzero, a contradiction.  This is made more precise in Proposition \ref{Prop:matroidContainment}.

\subsection{Dulmage-Mendelsohn Decomposition and Characterizing Generating Sets}

\begin{definition}\label{Def:DMdecomp}
\cite{DulMen1958, Pul1995} Let $G$ be a bipartite graph, $M$ a maximum-cardinality matching in $G$, and $V_0$ the set of vertices of $G$ unsaturated by $M$ (the ``free vertices''). Then $G$ can be partitioned into three parts
\begin{itemize}
\item $E$ - the vertices reachable from $V_0$ by an $M$-alternating path of even length.
\item $O$ - the vertices reachable from $V_0$ by an $M$-alternating path of odd length.
\item $U$ - the vertices unreachable from $V_0$ by an $M$-alternating path.
\end{itemize}
\end{definition}
It is well-known that the Dulmage-Mendelsohn decomposition is a special case of the Gallai-Edmonds decomposition, and these decompositions have highly useful properties when considering maximum matchings and minimum covers of graphs. 

\begin{theorem}[\cite{LoPl09}]
\label{Thm:DMdecomp}
If $M$ and $N$ are two maximum-matchings of bipartite graph $G$, then $M$ and $N$ define the same $(U, E, O)$ decomposition. 
\end{theorem}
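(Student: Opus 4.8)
The plan is to replace the matching-dependent definitions of $E$, $O$, $U$ by intrinsic descriptions. Concretely, set $D := \{v \in V(G) : v \text{ is exposed by some maximum matching of } G\}$; I claim that for every maximum matching $M$ one has $E(M) = D$, and then $O(M) = N_G(D) \setminus D$ and $U(M) = V(G) \setminus (D \cup N_G(D))$. The latter two identities follow quickly from the first: the initial segment ending at the last ($M$-)edge of an odd $M$-alternating path witnessing $v \in O(M)$ shows every vertex of $O(M)$ has a neighbor in $E(M)$; conversely, a routine alternating-path argument shows no vertex of $U(M)$ has a neighbor in $E(M)$ (if $w \in E(M)$ is reached by an even alternating path $P$ and $u$ is adjacent to $w$, then either $wu \in M$, forcing $u$ onto $P$ at an odd position, or $wu \notin M$, whence $P$ extends by $wu$ to an odd alternating path to $u$; either way $u \in O(M)$). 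Since $O(M)$ and $U(M)$ partition $V(G) \setminus E(M)$, this pins them down in terms of $E(M)$ alone, so once $E(M) = D$ is established, all three sets are matching-independent.

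For the inclusion $E(M) \subseteq D$: if $v$ is reached from an exposed vertex $f$ by an even $M$-alternating path $P$, then flipping $M$ along $P$ yields a matching of the same cardinality that saturates $f$ but exposes $v$, so $v \in D$. For $D \subseteq E(M)$, let $v \in D$ be witnessed by a maximum matching $N$ that exposes $v$; if $v$ is also exposed by $M$ then $v \in V_0 \subseteq E(M)$, so assume $v$ is $M$-saturated. The structural input is that $M \triangle N$ has no odd path component: such a component has unequal numbers of $M$- and $N$-edges, and flipping whichever of $M, N$ contributes fewer of its edges along that component would produce a strictly larger matching, contradicting maximality. Hence the component of $M \triangle N$ containing $v$ is an even path $P$ (it is a path, not a cycle, since $v$ has degree one in $M \triangle N$), whose edge at $v$ lies in $M$ and whose edge at the opposite endpoint $w$ therefore lies in $N$; since $w$ has degree one in $M \triangle N$ via an edge of $N \setminus M$, an incidence check rules out any $M$-edge at $w$, so $w \in V_0$. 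Traversing $P$ from $w$ to $v$ now exhibits an even $M$-alternating path from the exposed vertex $w$ to $v$, giving $v \in E(M)$.

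The main obstacle I anticipate is the bookkeeping in the last paragraph: establishing rigorously that a path component of $M \triangle N$ joining two maximum matchings has even length, and that a degree-one endpoint of such a component reached by an unmatched ($M$-)edge is genuinely $M$-exposed (one must exclude a matched edge in $M \cap N$ sitting at that endpoint). The remaining ingredients — the flipping lemma, the partition of $V(G) \setminus E(M)$ into $O(M)$ and $U(M)$, and the neighbor characterizations — are standard alternating-path manipulations. It is worth stating explicitly that $V_0$ itself does vary with the chosen maximum matching; the content of the theorem is precisely that the coarsening $(U, E, O)$ does not, which is exactly why the correct move is to characterize $E$ as $D$ rather than directly as a set of exposed vertices.
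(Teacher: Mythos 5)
Your proof is correct. Note that the paper does not actually prove this statement -- it is quoted from Lov\'asz--Plummer \cite{LoPl09} -- so there is no in-paper argument to compare against; what you have written is the standard self-contained proof, and it holds up. The key move, replacing the matching-dependent definition of $E$ by the intrinsic set $D$ of vertices exposed by \emph{some} maximum matching (the Gallai--Edmonds set) and then recovering $O = N_G(D)\setminus D$ and $U$ as the rest, is exactly the right one. The two delicate points you flagged are handled correctly: a path component of $M \mathbin{\triangle} N$ for two maximum matchings must be even (an odd component would be augmenting for whichever matching contributes fewer of its edges), and the far endpoint $w$ of the component containing $v$ is genuinely $M$-exposed, since an $M$-edge at $w$ lying in $M\cap N$ would give $w$ two $N$-edges while one in $M\setminus N$ would give $w$ degree two in $M \mathbin{\triangle} N$. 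One small definitional remark: your reduction of $O$ and $U$ to $E$ uses that $O(M)$ and $U(M)$ partition $V(G)\setminus E(M)$, which is implicit in the paper's Definition of the decomposition (it asserts the three classes partition $V(G)$); granting that, your neighbor characterizations pin down all three classes in terms of $D$ alone, and the theorem follows.
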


For any graph $G$, it will be convenient to define the subset $\mathcal{V}(S)$ of $\ker(G)$ {\it generated} by a set $S \subseteq V(G)$.  This definition is extended in Section \ref{Sec:Hypergraphs} to hypergraphs.

\begin{definition}
    If $S \subseteq V(G)$, then we denote by $\mathcal{V}^G(S)$ the subspace $\{\mathbf{v} \in \ker(G) : x \in S \Rightarrow v_x = 0\}$.  If $X \subseteq \ker(G)$, then we say that $S$ ``generates'' $X$ if $\mathcal{V}^G(S) = X$.
\end{definition}

Note that, given a set $X \subseteq \ker(G)$, if there is a set $S \subseteq V(G)$ which generates $X$, then there is a maximal set which does so: $\{x : (\forall \mathbf{v} \in X) (v_x = 0)\}$, the intersection of all zero loci of vectors in $X$, which we refer to as ``the'' generating set of $X$.

\begin{theorem}\label{Thm:BigEquiv}
Let $T$ be a tree, and $S \subseteq V(T)$. Then the following are equivalent. 
\begin{enumerate}
\item $\mathcal{V}^{T}(S) = \ker(T)$, i.e., $S$ is the generating set for the nullspace of $T$, i.e., $S$ is the set of common indices of zeros of all nullvectors. 
\item $S$ is the skew zero forcing closure of $\emptyset$.
\item $S$ is the union of all vertices in matching-frozen components of $\mathcal{F}(T)$ and the unique minimum cover of matching-thawed components of $\mathcal{F}(T)$. 
\item $S$ is the union of all minimum covers of $T$. 
\item $S$ is the intersection of all sets of saturated vertices in maximum matchings of $T$. 
\item $S = U\cup O$ in the Dulmage-Mendelsohn Decomposition of $T$.
\end{enumerate}
\end{theorem}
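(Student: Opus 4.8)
The plan is to prove the six-way equivalence by establishing a cycle of implications together with a couple of "shortcut" equivalences, leaning heavily on the structural results already assembled for the thermal decomposition and $\mathcal{F}(T)$. The backbone will be $(2) \Leftrightarrow (3)$, then $(3) \Leftrightarrow (4)$, then $(4) \Leftrightarrow (5)$, then $(4) \Leftrightarrow (1)$, and finally $(1)\text{--}(5) \Leftrightarrow (6)$. Since each of (1)--(6) describes a \emph{specific} subset of $V(T)$ (not a property satisfied by many sets), it suffices to show the sets coincide; the "$S \subseteq V(T)$ such that..." phrasing just means we prove that the named set has each listed description.

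First I would handle $(3) \Leftrightarrow (4)$ and $(4)\Leftrightarrow (5)$, which are essentially bookkeeping on top of Proposition~\ref{Claim:maxmatchingiffrestricts} and Theorem~\ref{Thm:Frozenandthawed}. By Proposition~\ref{Claim:maxmatchingiffrestricts}, minimum covers of $T$ restrict to minimum covers of each component of $\mathcal{F}(T)$, and conversely any choice of minimum covers on the components glues to a minimum cover of $T$ (one must check the $F_T'$ edges are covered, which follows from Proposition~\ref{Claim:FincidentToCoverVertices}: the bc-tree endpoint of such an edge lies in the unique minimum cover of its component). Hence the union of all minimum covers of $T$ is the union, over components, of the union of all their minimum covers: for a matching-frozen component with a perfect matching this is all of $V$ of that component by Lemma~\ref{lem:perfectmatchingvertexpartition}, and for a matching-thawed (bc-tree) component it is the unique minimum cover (which is independent). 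That is exactly the set in (3), giving $(3)\Leftrightarrow(4)$. For $(4)\Leftrightarrow(5)$: by Proposition~\ref{prop:CcoversM} every vertex of every minimum cover is saturated in every maximum matching, so the union of minimum covers is contained in the intersection of saturated-vertex sets; for the reverse, a vertex outside all minimum covers lies in a bc-tree component (Theorem~\ref{Thm:Frozenandthawed}(1) gives it is incident to no $M_T$ edge) and Proposition~\ref{Claim:bctreematchingflexible} produces a maximum matching of that component, hence of $T$ via Proposition~\ref{Claim:maxmatchingiffrestricts}, leaving it unsaturated.

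Next, $(2) \Leftrightarrow (3)$. The "$\supseteq$" direction: run SZF from $\emptyset$. In a bc-tree component rooted appropriately, a pendant vertex's unique neighbor gets forced, and one checks inductively (as in the proof of Proposition~\ref{Claim:maxmatchingiffrestricts}/\ref{Claim:bctreematchingflexible}) that exactly the vertices at odd height — the unique minimum cover — are forced; in a component with a perfect matching, forcing propagates along the matching from the leaves and fills the entire component. Edges in $F_T'$ don't help because both of their endpoints get filled anyway by the time they could matter. The "$\subseteq$" direction: show the set $S$ in (3) is SZF-closed, i.e., no vertex has exactly one unfilled neighbor. Using Proposition~\ref{Claim:incidenttoMT}'s dichotomy at each vertex together with the independence of the bc-tree covers, a vertex $v\notin S$ lies in a bc-tree component and is not in its cover, so all its neighbors \emph{in that component} are in the cover hence in $S$; but $\deg_T(v)=\deg_X(v)$ in that case (if $v$ had an outside $F_T$ edge, Proposition~\ref{Claim:FincidentToCoverVertices} would put $v$ in the cover), so $v$ has no unfilled neighbor, not exactly one — wait, all neighbors filled means zero unfilled, fine. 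A vertex $v\in S$ in a bc-tree component either is in the cover (then its neighbors are outside the cover, and $v$ typically has several such neighbors; the one-unfilled-neighbor case would force something, so we must rule it out — this is where the bc-tree distance-parity structure is used to guarantee $\deg \ge 2$ at cover vertices, or handle the degenerate single-edge case, which Proposition~\ref{Claim:incidenttoMT} says doesn't occur since $|E(X)|>1$) or is an isolated vertex of $\mathcal{F}(T)$; for perfect-matching components every vertex is in $S$ so no forcing happens there. I expect this verification — that $S$ from (3) is exactly SZF-closed, getting the "exactly one" count right at every vertex — to be the main obstacle, since it requires careful case analysis at vertices straddling two component types via $F_T'$ edges.

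For $(4)\Leftrightarrow(1)$: by Proposition~\ref{Claim:maxmatchingiffrestricts} and the component decomposition, $\ker(T)$ decomposes across components of $\mathcal{F}(T)$ (a nullvector's restriction to each component is a nullvector there, once one notes the $F_T'$ edges force the "dangling" coordinate to be zero — precisely, the non-bc-tree endpoint of an $F_T'$ edge must be zero in any nullvector, by an alternating-path / pendant-peeling argument), so the generating set of $\ker(T)$ is the union over components of their generating sets. A component with a perfect matching is nonsingular, so its generating set is all of its vertices; a bc-tree component has nullity equal to the number of its pendant vertices and its generating set is its unique minimum cover — this last fact is the tree specialization of Proposition~\ref{Prop:SkewZeroClosedGeneratenullvectors} applied to $S=\emptyset$, or can be seen directly since $\mathcal{V}^X(\emptyset)=\ker(X)$ and $Z(\ker(X))$ is SZF-closed (remark after Proposition~\ref{Prop:SkewZeroClosedGeneratenullvectors}) and minimal, hence equals $\overline{\emptyset}$, closing the loop with (2). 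Alternatively, $(1)\Leftrightarrow(2)$ follows directly from Proposition~\ref{Prop:SkewZeroClosedGeneratenullvectors}: that proposition says the SZF-closed sets are exactly the zero loci of nullvectors, and the generating set of $\ker(T)$ is the intersection of all zero loci, which is the minimum SZF-closed set, namely $\overline{\emptyset}$ by Proposition~\ref{Prop:SkewZeroClosure}. This is the cleanest route, so I would actually make $(1)\Leftrightarrow(2)$ the pivot and derive it straight from Proposition~\ref{Prop:SkewZeroClosedGeneratenullvectors} plus monotonicity.

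Finally $(6)$: the Dulmage–Mendelsohn decomposition of a tree $T$ (viewed as bipartite) is, by Theorem~\ref{Thm:DMdecomp}, independent of the chosen maximum matching. A vertex is in $E$ iff it is free or reachable by an even $M$-alternating path from a free vertex, equivalently iff there is \emph{some} maximum matching leaving it unsaturated (standard Gallai–Edmonds/König fact); so $U\cup O$ is exactly the set of vertices saturated by \emph{every} maximum matching, which is $(5)$. This gives $(5)\Leftrightarrow(6)$ and completes the cycle. I would present the argument in the order $(1)\Leftrightarrow(2)$, $(2)\Leftrightarrow(3)$, $(3)\Leftrightarrow(4)$, $(4)\Leftrightarrow(5)$, $(5)\Leftrightarrow(6)$, flagging that the only genuinely delicate step is the SZF-closedness check in $(2)\Leftrightarrow(3)$.
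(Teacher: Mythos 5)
Your architecture is sound and four of your five legs coincide with the paper's: $(1)\Leftrightarrow(2)$ via Proposition \ref{Prop:SkewZeroClosedGeneratenullvectors} and monotonicity, $(3)\Leftrightarrow(4)$ and $(4)\Leftrightarrow(5)$ via the component analysis of $\mathcal{F}(T)$, and $(5)\Leftrightarrow(6)$ (though the paper writes out the alternating-path argument you wave at as ``standard Gallai--Edmonds''). The genuine divergence is the link between $\{1,2\}$ and $\{3,4,5,6\}$: you go through $(2)\Leftrightarrow(3)$ by running the forcing process directly on the thermal decomposition, whereas the paper proves $(2)\Leftrightarrow(5)$ using Corollary \ref{Cor:Gammoid} -- the fact that the minimal skew zero forcing sets of a tree are exactly the unsaturated vertex sets of maximum matchings -- plus a maximal-alternating-path swap showing that any $x\notin\overline{\emptyset}$ is missed by some maximum matching. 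What the paper's route buys is that it never has to track the order in which forces fire across $F_T'$ edges; what your route would buy, if completed, is a link from $(2)$ to $(3)$ that does not lean on the matching-matroid duality of Section \ref{Sec:MatchingsandMatroids}.

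The thin spot is exactly the one you flag: the containment of the set in $(3)$ inside $\overline{\emptyset}$, i.e., that forcing from $\emptyset$ actually reaches every vertex of every perfect-matching component and every cover vertex of every bc-tree component. Your reverse containment (the set in $(3)$ is SZF-closed) does go through -- every endpoint of an $F_T'$ edge lies in that set by Proposition \ref{Claim:FincidentToCoverVertices}, non-cover vertices of bc-tree components have all neighbors filled, and cover vertices are non-pendant so have at least two unfilled neighbors -- but the forward containment needs an induction you have not supplied: first that every pendant vertex of a bc-tree component is pendant in $T$ (true, since such vertices avoid the cover, so Proposition \ref{Claim:FincidentToCoverVertices} forces $\deg_T=\deg_X$), then that forcing propagates inward through each bc-tree component until its entire cover is filled, and only then that the perfect-matching components fill completely (their $F_T'$-neighbors, being bc-tree cover vertices, are filled by that point, and confluence of the closure lets you schedule these forces last). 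I do not believe any of these steps fails, but together they constitute the real content of the leg, and as written the proposal asserts rather than proves them. Either complete that induction or replace the leg with the paper's $(2)\Leftrightarrow(5)$ argument.
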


\begin{proof}
$2 \Leftrightarrow 5$: By Corollary \ref{Cor:Gammoid} (which does not use results from outside subsection \ref{Sec:MatchingsandMatroids}; see discussion preceding the proof), the sets unsaturated by maximum matchings are precisely the minimal sets $S \subseteq V(T)$ so that $\overline{S} = V(T)$.  Therefore, if $M$ is a maximum matching of $T$, and $X_M$ is the collection of vertices unsaturated by $M$, then the minimality of the skew zero forcing set $X_M$ gives $X_M \cap \overline{\emptyset} = \emptyset$, since $\overline{X_M \setminus \overline{\emptyset}} \supseteq \overline{(X_M \setminus \overline{\emptyset}) \cup \overline{\emptyset}} = V(T)$ by Proposition \ref{Prop:SkewZeroClosure}. Thus, $\overline{\emptyset} \subseteq V(M)$ for every maximum matching $M$, so if $S \subseteq V(T)$ is the collection of vertices saturated by every maximum matching, $\overline{\emptyset} \subseteq S$.  Now, suppose $x \not \in \overline{\emptyset}$, and let $M$ be a maximum matching.  If there is a maximum matching $M$ which does not saturate $x$, then $x \not \in S$.  Suppose $x_0 := x$ is saturated in $M$; we construct a maximal $M$-alternating path $P = x_0 x_1 x_2\ldots x_m$ of even length starting with $x$ and the edge $x x_1 \in M$ so that $x_{2t} \not \in \overline{\emptyset}$ for each $t \geq 0$.  We claim $x_m$ is unmatched: otherwise, $m$ is odd, but then it has only one neighbor -- namely, $x_{m-1}$ -- outside the set $\overline{\emptyset}$, contradicting that $\overline{\emptyset}$ is SZF-closed.  Thus, $M' = (M \setminus E(P)) \cup (E(P) \setminus M)$ is a maximal matching which does not saturate $x$, so $x \not \in S$.  We may conclude that $S \subseteq \overline{\emptyset}$.

$1 \Rightarrow 2$: Since $S$ is the collection of common zeros of all nullvectors of $T$, $S$ is clearly skew zero forcing closed by the remark following Proposition \ref{Prop:SkewZeroClosedGeneratenullvectors}. Since $\emptyset\subseteq S$, $\overline{\emptyset} \subseteq \overline{S} = S$. It remains to show $S \subseteq \overline{\emptyset}$. Since $S$ is the generating set for $\ker(T)$, $v_s = 0$ for every $\mathbf{v} \in \ker(T)$ and every $s \in S$. By definition, $\mathcal{V}^T(\overline{\emptyset}) \subseteq \ker(T)$, so this inclusion implies that $v_s = 0$ for every $\mathbf{v} \in \mathcal{V}^T(\overline{\emptyset})$ and every $s \in S$. Thus, $S \subseteq \overline{\emptyset}$.

$2 \Rightarrow 1$: Let $S = \overline{\emptyset}$, and let $G$ be the generating set for the nullspace of $T$. By Proposition \ref{Prop:SkewZeroClosedGeneratenullvectors}, $\overline{\emptyset}$ is the zero locus of some nullvector of $T$. Thus, since $G$ is the collection of all coordinates which are zero for every nullvector of $T$, $G\subseteq \overline{\emptyset}$. 
 
On the other hand, Proposition \ref{Prop:SkewZeroClosedGeneratenullvectors} implies that $G$ is skew zero forcing closed, meaning $G = \overline{G}$. Thus, Proposition \ref{Prop:SkewZeroClosure} and $\emptyset \subseteq G$ give $S = \overline{\emptyset} \subseteq \overline{G} = G$, completing the proof of $S = G$. 

$3 \Leftrightarrow 4$: Let $S$ be the union of all vertices in matching-frozen components of $\mathcal{F}(T)$ and the unique minimum cover of matching-thawed components of $\mathcal{F}(T)$. Recall from the comment after Proposition \ref{Claim:incidenttoMT} that components of $\mathcal{F}(T)$ are each either a bc-tree or have a perfect matching. Moreover, by Theorem \ref{Thm:Frozenandthawed}, the matching-frozen components of $\mathcal{F}(T)$ are those components with a perfect matching, since every vertex of a perfectly matched component of $\mathcal{F}(T)$ is incident only to matching-frozen edges. Similarly, Theorem \ref{Thm:Frozenandthawed} implies that the matching-thawed components of $\mathcal{F}(T)$ are the bc-tree components, since every vertex of a bc-tree component of $\mathcal{F}(T)$ is not incident to any edge of $M_T$. Now, let $\mathcal{C}$ be the collection of all minimum covers of $T$. By Proposition \ref{Claim:maxmatchingiffrestricts}, any cover in $\mathcal{C}$ restricts to a minimum cover of each component of $\mathcal{F}(T)$. Let $X$ be a component of $\mathcal{F}(T)$. 

If $X$ is a bc-tree, then Theorem \ref{Thm:Equivbctree} implies $X$ has a unique minimum cover. Proposition \ref{Claim:maxmatchingiffrestricts} further implies $C\cap V(X)$ is the unique minimum cover in $X$ for any $C\in \mathcal{C}$. Thus, $S\cap V(X) = (\bigcup \mathcal{C}) \cap V(X)$ in this case for the bc-tree/matching-thawed components of $\mathcal{F}(T)$.

On the other hand, if $X$ contains a perfect matching, Lemma \ref{lem:perfectmatchingvertexpartition} gives the existence of two covers $C_1, C_2 \in \mathcal{C}$ so that $(C_1 \cap V(X)) \cup (C_2 \cap V(X)) = V(X)$. Thus, $S\cap V(X) = (\cup \mathcal{C}) \cap V(X)$ in this case for the perfect matching/matching-frozen components of $\mathcal{F}(T)$.

$4 \Leftrightarrow 5$: Let $\mathcal{C}$ be the collection of all minimum covers of $T$, and define $S = \bigcup \mathcal{C}$. 
By Proposition \ref{prop:CcoversM}, for every $C \in \mathcal{C}$ and every $v \in C$, $v$ is saturated by every maximum matching of $T$. Thus, $S$ is contained in the intersection of sets of vertices saturated by maximum matchings of $T$. 

As for the other inclusion, suppose $v \in V(T)$ so that $v$ is saturated by every maximum matching of $T$. Let $X$ be the component of $\mathcal{F}(T)$ containing $v$. If $X$ contains a perfect matching, then Lemma \ref{lem:perfectmatchingvertexpartition} and Proposition \ref{Claim:maxmatchingiffrestricts} give that $V(X) \subseteq S$, so $v \in S$. On the other hand, if $X$ is a bc-tree, then Proposition \ref{Claim:bctreematchingflexible} gives that $v$ is contained in the minimum cover of $X$, and Proposition \ref{Claim:maxmatchingiffrestricts} gives that $v \in S$, completing the proof of the desired equality. 

$5 \Leftrightarrow 6$: (This is easily deduced from Theorem 3.2.1 in \cite{LoPl09}; we include a proof here for completeness.) Suppose that $S$ is the intersection of sets of saturated vertices in maximum matchings of $T$. As is given by previous proofs, $S$ contains every vertex of perfect matching components of $\mathcal{F}(T)$ and exactly the unique minimum cover of bc-tree components of $\mathcal{F}(T)$. 

Let $(U, E, O)$ be the Dulmage-Mendelsohn decomposition of $T$. Let $v \in V(T)$, and let $X$ be the component of $\mathcal{F}(T)$ which contains $v$. If $X$ is a bc-tree, then Proposition \ref{Claim:bctreematchingflexible} gives that $v$ is not an element of the unique minimum cover of $X$ if and only if $v$ is unsaturated in some maximum matching of $T$. Thus, if $C$ is the unique minimum cover of $X$, then $V(X) \setminus C \subseteq E$, because $E$ contains all unsaturated vertices. Moreover, since $C$ is independent, for every $c \in C$, $N_X(c) \subseteq V(X) \setminus C \subseteq E$. Thus, $C \subseteq O$. 

On the other hand, if $X$ contains a perfect matching, then all vertices of $X$ are saturated by every maximum matching of $T$. Let $M$ be such a matching, and $V_0$ be the vertices of $T$ unsaturated by $M$. 
By way of contradiction, suppose $v \notin U$, meaning there exists path $P$ with terminal vertices $v$ and $u \in V(T)$ so that $P$ is alternating with respect to $M$ and $u \in V_0$. Let $Y$ be the component of $\mathcal{F}(T)$ containing $u$. Since $u \in V_0$, $Y$ is a bc-tree, and $u$ is not contained in the minimum cover of $Y$. Let this minimum cover be $C_Y$. Since $Y$ and $X$ are different components of $\mathcal{F}(T)$, there exists edge $e \in F_T' \cap E(P)$ so that $\{u_1\} := e\cap V(Y)$. By Proposition \ref{Claim:FincidentToCoverVertices}, $u_1 \in C_Y$, meaning $u_1 \neq u$. Thus, if $P'$ denotes the alternating (with respect to $M$) subpath of $P$ with endpoints $u_1$ and $u$, $|E(P')|> 0$. Since $P$ is an alternating path and $e \notin M$, the edge of $P'$ incident to $u_1$ is in $M$. Since $u \in V(P')$ is unmatched in $M$, the edge of $P'$ incident to $u$ is not in $M$, so $P'$ an alternating path implies $\dist(u_1,u)$ is even. However, $u_1 \in C_Y$ and Theorem \ref{Thm:Equivbctree} imply $C_Y$ is exactly the vertices of $Y$ an even distance from $u_1$. Thus, $u \in C_Y$, a contradiction, and so $v \in U$, whence $V(X) \subseteq U$ and we may conclude that $S \subseteq U\cup O$.


To show $U \cup O\subseteq S$, we show instead $S' \subseteq E$, where we define $S' = V(T) \setminus S$. In a bc-tree component $X$ of $\mathcal{F}(T)$, we know $S' \cap V(X)$ is exactly the vertices not in the minimum cover. By Proposition \ref{Claim:FincidentToCoverVertices}, each of these vertices are omitted from some maximum matching of $T$. Thus, Theorem \ref{Thm:DMdecomp} implies $S' \cap V(X) \subseteq E$. For a perfect matching component $X$ in $\mathcal{F}(T)$, $S' \cap V(X) = \emptyset$, so the inclusion $S' \cap V(X) \subseteq E$ is trivial.

\end{proof}

The equivalence of (1) and (5) appears essentially as Corollary 19 of \cite{SaSa09}.

\subsection{Edge Influence on Rank in the Thermal Decomposition}

In this section, we describe another equivalent formulation of the thermal decomposition which is convenient for computation.  Let $\rank(G)$ denote $\rank(A(G))$ for any graph $G$. Let $\nu(G)$ denote the size of a maximum matching in $G$. Let $\eta(G)$ denote the nullity of any graph $G$. Finally, let $c(G)$ denote the size of a minimum cover of $G$. The authors of \cite{CveGut1972} prove that for a tree $T$ on $n \geq 1$ vertices with maximum matching size $\nu(T)$, the nullity of $T$ is given by $\eta(T) = n - 2\nu(T)$. Thus, it is also the case that $\rank(T) = 2\nu(T)$. 

\begin{prop}\label{Prop:rankformandatory}
Let $T$ be a tree with $e \in E(T)$. Then $e \in M_T$ if and only if $\rank(T) = \rank(T-e) + 2$. 
\end{prop}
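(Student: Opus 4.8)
The plan is to use the two facts recalled just before the statement: for any tree $T$ on $n$ vertices, $\rank(T) = 2\nu(T)$, and $\eta(T) = n - 2\nu(T)$. Since $T - e$ is a forest (possibly with two components) on the same vertex set, the same identity applies: $\rank(T-e) = 2\nu(T-e)$, because for a forest the nullity/rank formula follows by summing the tree formula over components and $\nu$ is additive over components. Thus the claim $\rank(T) = \rank(T-e) + 2$ is equivalent to $\nu(T) = \nu(T-e) + 1$, and I would reformulate the entire proposition in terms of matching numbers: $e \in M_T$ if and only if $\nu(T-e) = \nu(T) - 1$.

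First I would handle the forward direction. Suppose $e \in M_T$, i.e., $e$ lies in every maximum matching of $T$. Deleting $e$ cannot increase the matching number, and any maximum matching of $T-e$ is a matching of $T$ not containing $e$; since every maximum matching of $T$ contains $e$, such a matching has size at most $\nu(T)-1$, giving $\nu(T-e) \le \nu(T)-1$. The reverse inequality $\nu(T-e)\ge \nu(T)-1$ is immediate by removing $e$ from a maximum matching of $T$. Hence $\nu(T-e) = \nu(T)-1$, which translates to $\rank(T-e) = \rank(T) - 2$.

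For the converse, suppose $\nu(T-e) = \nu(T) - 1$; I want to show $e$ is in every maximum matching of $T$. Equivalently, I'd show the contrapositive: if $e \notin M_T$, then $\nu(T-e) = \nu(T)$. If $e$ is in no maximum matching (the forbidden case), then every maximum matching of $T$ is already a matching of $T-e$, so $\nu(T-e) = \nu(T)$. If $e$ is optional, then some maximum matching of $T$ avoids $e$, and that matching lives in $T-e$, again giving $\nu(T-e) = \nu(T)$. So $e \notin M_T$ forces $\nu(T-e) = \nu(T)$, i.e., $\rank(T-e) = \rank(T)$; contrapositively, $\rank(T) = \rank(T-e)+2$ implies $e \in M_T$.

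The only real subtlety — and the step I'd be most careful about — is justifying $\rank(T-e) = 2\nu(T-e)$ when $T-e$ is disconnected: one must note that the Cvetkovi\'c--Gutman formula $\eta(F) = |V(F)| - 2\nu(F)$ holds for forests (apply it componentwise and use additivity of $|V|$, $\nu$, and $\eta$ over components), hence $\rank(F) = 2\nu(F)$ for any forest $F$. Everything else is elementary matching manipulation. I would state that forest extension as a one-line remark and then carry out the two directions above.
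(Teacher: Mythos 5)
Your proposal is correct and follows essentially the same route as the paper: translate rank to matching number via $\rank = 2\nu$ and observe that $e \in M_T$ is equivalent to $\nu(T-e) = \nu(T) - 1$. You are in fact slightly more careful than the paper, which applies the tree formula $\rank(X) = 2\nu(X)$ to the forest $T-e$ without comment, whereas you explicitly justify the componentwise extension.
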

\begin{proof}
Note that if $\nu(T')$ denotes the size of the maximum matching of the tree $T'$, then $\nu(T') = \rank(T') / 2$. 

Suppose first that $\rank(T) = \rank(T-e) + 2$. Then, $\nu(T) = \nu(T - e) + 1$. Thus, every maximum matching of $T$ contains $e$, giving $e \in M_T$. Now suppose that $e \in M_T$. Then, every maximum matching of $T$ contains $e$, so $\nu(T) = \nu(T - e) + 1$. By the relationship between $\nu(X)$ and $\rank(X)$ for any tree $X$, we have the desired result. 
\end{proof}

\begin{prop}
Let $T$ be a tree, and let $e \in E(T)$. If $\rank(T) = \rank(T-e) + 2$, then $\rank(T-e) = \rank(T/e)$. 
\end{prop}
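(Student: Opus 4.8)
The plan is to translate the statement into matching numbers and then into a purely combinatorial claim about contraction. Recall that $\rank(T') = 2\nu(T')$ for every tree $T'$ \cite{CveGut1972}, hence $\rank(F) = 2\nu(F)$ for every forest $F$ by summing over components. Since $e$ is a bridge of the tree $T$, the graph $T - e$ is a forest (with two components) and $T/e$ is again a simple tree -- contracting $e = uv$ cannot create a loop or a multi-edge, as either would force a cycle through $e$ in $T$. So it suffices to show $\nu(T/e) = \nu(T - e)$. By Proposition \ref{Prop:rankformandatory}, the hypothesis $\rank(T) = \rank(T-e) + 2$ says precisely that $e \in M_T$, i.e., $e$ lies in every maximum matching of $T$; in particular $\nu(T - e) = \nu(T) - 1$, so the goal reduces to proving $\nu(T/e) = \nu(T) - 1$.

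To get $\nu(T/e) \ge \nu(T) - 1$, I would pick a maximum matching $M$ of $T$ with $e = uv \in M$ (possible since $e \in M_T$). No edge of $M \setminus \{e\}$ is incident to $u$ or $v$, so each such edge remains an edge of $T/e$ not incident to the contracted vertex $w$; hence $M \setminus \{e\}$ is a matching of $T/e$ of size $\nu(T) - 1$.

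For the reverse inequality $\nu(T/e) \le \nu(T) - 1$, I would argue by contradiction: suppose $T/e$ has a matching $M'$ with $|M'| \ge \nu(T)$, and split on whether $M'$ saturates the contracted vertex $w$. If it does not, then $M'$ consists of edges of $T$ missing both $u$ and $v$, so it is a matching of $T$ of size at least $\nu(T)$ avoiding $e$, contradicting $e \in M_T$. If $M'$ saturates $w$ via an edge $wz$, then in $T$ the vertex $z$ is adjacent to exactly one of $u, v$ (adjacency to both would be a triangle through $e$); say $uz \in E(T)$. Replacing $wz$ by $uz$ in $M'$ produces an edge set of $T$ of the same size, which is a matching because the remaining edges of $M'$ avoid $w$ and hence $u, v, z$; it has size at least $\nu(T)$, saturates $u$ by an edge other than $e$, and leaves $v$ unsaturated, so it is a maximum matching of $T$ not containing $e$ -- again a contradiction.

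Combining the two bounds gives $\nu(T/e) = \nu(T) - 1 = \nu(T-e)$, so $\rank(T/e) = 2\nu(T/e) = 2\nu(T-e) = \rank(T - e)$. The steps I expect to need the most care are (i) making sure the identity $\rank = 2\nu$ is legitimately applied to the forest $T - e$ and the contracted tree $T/e$ (both reduce to the tree case), and (ii) the case analysis in the lifting step -- verifying that the lifted edge set is genuinely a matching of $T$ and genuinely omits $e$. Neither is hard, but the argument goes wrong if one is sloppy about whether $M'$ saturates $w$.
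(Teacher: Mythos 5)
Your proof is correct and follows essentially the same route as the paper's: reduce to matching numbers via $\rank = 2\nu$, use Proposition \ref{Prop:rankformandatory} to get $e \in M_T$, and rule out $\nu(T/e) = \nu(T)$ by lifting a maximum matching of $T/e$ to a maximum matching of $T$ that omits $e$. The only difference is that you spell out the lifting step carefully (splitting on whether the contracted vertex is saturated), a detail the paper's proof asserts without elaboration; your version is the more complete one.
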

\begin{proof}
Since $\rank(T) = \rank(T-e) + 2$, the proof of Proposition \ref{Prop:rankformandatory} gives that every maximum matching of $T$ contains $e$. Clearly, there are two possibilities for the relationship between $\nu(T)$ and $\nu(T/e)$. Either $\nu(T) = \nu(T/e)$ or $\nu(T) = \nu(T/e) + 1$. If the former is true, then let $M$ be a maximum matching of $T/e$. Then $M$ is also a maximum matching of $T$ which omits $e$, a contradiction. Thus, $\nu(T) = \nu(T/e) + 1$, implying $\rank(T) = \rank(T/e) + 2$.
\end{proof}

\begin{prop}
Let $T$ be a tree with $e \in E(T)$, so that $\rank(T) = \rank(T-e)$. Then $e \in O_T$ if and only if $\rank(T) = \rank(T/e)$. 
\end{prop}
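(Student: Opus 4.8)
The plan is to translate everything into matching numbers and then split on the thermal class of $e$. Since $T/e$ is again a tree, the identity $\rank(X) = 2\nu(X)$ for trees (from \cite{CveGut1972}, quoted above) lets us replace the conclusion ``$\rank(T) = \rank(T/e)$'' by ``$\nu(T) = \nu(T/e)$'' and the hypothesis ``$\rank(T) = \rank(T-e)$'' by ``$\nu(T) = \nu(T-e)$''. The hypothesis then says some maximum matching of $T$ avoids $e$, so $e \notin M_T$, i.e., $e \in O_T \cup F_T$. Hence it suffices to show $\nu(T/e) = \nu(T)$ when $e \in O_T$ and $\nu(T/e) = \nu(T) - 1$ when $e \in F_T$.

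The engine is the following lemma, which I would prove by elementary matching surgery: writing $e = uv$ and letting $w$ denote the contracted vertex of $T/e$, one always has $\nu(T) - 1 \le \nu(T/e) \le \nu(T)$, with $\nu(T/e) = \nu(T)$ precisely when some maximum matching of $T$ leaves $u$ or $v$ unsaturated. The bound $\nu(T/e) \le \nu(T)$ holds because a matching $M'$ of $T/e$ missing $w$ is already a matching of $T$, while if $M'$ uses an edge $wz$ coming from $uz \in E(T)$ then $(M' \setminus \{wz\}) \cup \{uz\}$ is a matching of $T$ of the same size; the bound $\nu(T/e) \ge \nu(T)-1$ holds by adapting a maximum matching of $T$ (deleting at most one edge after contraction). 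For the characterization: if a maximum matching $M$ of $T$ misses $u$, then by maximality $v$ is matched, say by $vy \in M$, and $(M \setminus \{vy\}) \cup \{wy\}$ is a matching of $T/e$ of size $\nu(T)$; conversely, if every maximum matching of $T$ saturates both $u$ and $v$, the two surgeries above turn any maximum matching of $T/e$ into a matching of $T$ missing $u$ or $v$, forcing its size to be at most $\nu(T)-1$.

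With the lemma in hand, the $F_T$ case is immediate: if $e = uv \in F_T$ but some maximum matching $M$ missed $u$, then $v$ is either also missed -- whence $M \cup \{e\}$ is larger, a contradiction -- or matched by some edge $vy$, whence $(M \setminus \{vy\}) \cup \{e\}$ is a maximum matching containing $e$, contradicting $e \in F_T$. So both endpoints of $e$ are saturated by every maximum matching, and the lemma gives $\nu(T/e) = \nu(T) - 1$. For the $O_T$ case, I would invoke the structural results already proved: by Theorem \ref{Thm:Frozenandthawed}(3), a matching-thawed edge $e$ is incident to two cover-frozen vertices, one lying in every minimum cover and the other in none; and by Theorem \ref{Thm:BigEquiv} (equivalence of (4) and (5)) the union of all minimum covers of $T$ is exactly the set of vertices saturated by every maximum matching. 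Hence the endpoint of $e$ omitted from all minimum covers is unsaturated by some maximum matching, and the lemma yields $\nu(T/e) = \nu(T)$. Combining the two cases proves the proposition.

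I expect the main obstacle to be the careful verification of the matching-surgery lemma -- in particular, checking that the edge swaps between $T$ and $T/e$ really produce valid matchings (no vertex covered twice) and tracking which vertex of $T/e$ the contracted vertex $w$ corresponds to. As an alternative to the structural citations, the $O_T$ direction can be argued directly from the block-cutpoint structure of the components of $\mathcal{F}(T)$: an edge of $O_T$ lies in a bc-tree component $X$ whose unique minimum cover $C_X$ is independent, so exactly one endpoint of $e$ lies outside $C_X$ and is unsaturated in some maximum matching of $X$ (Proposition \ref{Claim:bctreematchingflexible}), which extends to a maximum matching of $T$ missing that endpoint (Proposition \ref{Claim:maxmatchingiffrestricts}, using that no edge of $F_T'$ lies in any maximum matching).
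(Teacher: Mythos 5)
Your proof is correct, and it is organized differently from the paper's. Your central lemma --- $\nu(T)-1 \le \nu(T/e) \le \nu(T)$, with $\nu(T/e)=\nu(T)$ exactly when some maximum matching of $T$ misses an endpoint of $e$ --- checks out: the edge swaps are legitimate because any other edge of a matching containing an edge at the contracted vertex $w$ (resp.\ at $u$ or $v$) must avoid $w$ (resp.\ both $u$ and $v$). The paper distributes the work differently. Its forward direction ($\rank(T)=\rank(T/e)\Rightarrow e\in O_T$) is a direct surgery: lift a maximum matching of $T/e$ to a maximum matching of $T$ missing one endpoint of $e$, then swap $e$ in --- essentially one half of your lemma. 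Its converse is proved by contraposition through minimum covers: assuming $\rank(T)=\rank(T/e)+2$ and a maximum matching containing $e$, it deduces that a minimum cover of $T/e$ covers $T-e$, giving $c(T/e)\ge c(T-e)=c(T)$ and contradicting $c(T)=c(T/e)+1$. You avoid covers in that direction entirely, handling $e\in F_T$ by the elementary observation that both endpoints of a forbidden edge are saturated in every maximum matching, and you instead put the structural input into the $e\in O_T$ case by citing Theorem~\ref{Thm:Frozenandthawed}(3) together with the (4)$\Leftrightarrow$(5) equivalence of Theorem~\ref{Thm:BigEquiv} (or, in your alternative, Propositions~\ref{Claim:bctreematchingflexible} and~\ref{Claim:maxmatchingiffrestricts}). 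Those citations are not circular --- the results are established before this subsection and do not depend on it --- but they do make your argument lean on heavier machinery than the paper's self-contained surgery for that direction; in exchange, your lemma is a reusable statement about contraction and matching number, and your $F_T$ case is cleaner than the paper's cover-counting contradiction.
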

\begin{proof}
First suppose $\rank(T) = \rank(T/ e)$. Since $\rank(T) = \rank(T-e)$, $\nu(T) = \nu(T-e)$. Thus, there exists a maximum matching of $T$ which does not contain $e$, so $e \notin M_T$. So, at least one endpoint of $e$ is saturated by every maximum matching of $T$. On the other hand, since $\rank(T) = \rank(T/ e)$, there exists a maximum matching of $T$ which leaves one endpoint of $e$ unsaturated, since we can simply take a maximum matching of $T/e$ and extend it to a maximum matching of $T$. Let $M$ be this matching, $e = uv$, and suppose $u$ is the unique endpoint of $e$ saturated in $M$. Let $e'$ be the matching edge incident to $u$. Then $(M \setminus \{e'\}) \cup \{e\}$ is another maximum matching of $T$, proving $e \in O_T$, finishing the proof in this direction.  

Now, suppose $\rank(T) = \rank(T/ e) + 2$, and let $e = uv$. Note that $\rank(T) = \rank(T/ e) + 2$ implies $\nu(T) = \nu(T/ e) + 1$ and $c(T) = c(T/e) + 1$. Let the contracted vertex in $T/e$ be $v'$.  Suppose $M$ is a maximum matching of $T$ that contains $e$. Let $M' = M \setminus \{e\}$, so that $M'$ is a matching of $T/e$. Note that $\nu(T) = \nu(T/ e) + 1$ implies $M'$ is a maximum matching of $T/e$. Since $v'$ is unsaturated by $M'$, Proposition \ref{prop:CcoversM} gives that every minimum cover of $T/e$ omits $v'$, meaning all neighbors of $v'$ in $T/e$ are contained in every minimum cover of $T/e$. Let $C'$ be such a minimum cover. Then $C'$ is also a cover of $T-e$, giving that $c(T/e) \geq c(T-e)$, but the assumption $\rank(T) = \rank(T-e)$ implies $c(T) = c(T-e)$. Substitution gives $c(T/e) \geq c(T)$, but $c(T) = c(T/e) + 1$, a contradiction. Therefore, no maximum matching of $T$ containing $e$ exists, proving $e \in F_T$. 
\end{proof}

From the previous propositions, it is clear that
$e \in F_T$ if and only if $\rank(T) = \rank(T-e) = \rank(T\setminus e) + 2$. Therefore, we summarize these results as follows.
\begin{theorem} For any tree $T$ and edge $e \in E(G)$,
\begin{enumerate}
\item $e \in M_T$ iff $\rank(T) = \rank(T - e) + 2$ (this also implies $\rank(T) = \rank(T/e) + 2$).
\item $e \in O_T$ iff $\rank(T) = \rank(T/e)$ (this also implies $\rank(T) = \rank(T-e)$). 
\item $e \in F_T$ iff $\rank(T) = \rank(T-e)$ and $\rank(T) = \rank(T/e) + 2$.
\end{enumerate}
\end{theorem}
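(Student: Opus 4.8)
The theorem is a repackaging of the three propositions just proved, so the plan is simply to assemble them, using the fact that $M_T, O_T, F_T$ partition $E(T)$ together with a dichotomy for how $\rank$ responds to deleting or contracting an edge. First I would record two elementary facts. (i) For any tree $X$ we have $\rank(X) = 2\nu(X)$, as already noted via \cite{CveGut1972}. (ii) For any $e \in E(T)$, both $\nu(T-e)$ and $\nu(T/e)$ lie in $\{\nu(T)-1,\nu(T)\}$: deleting $e$ from a maximum matching of $T$ costs at most one edge while a matching of $T-e$ is a matching of $T$; and a maximum matching of $T/e$ lifts to a matching of $T$ of equal size (in a tree each vertex meets at most one endpoint of $e$, so the lift is unambiguous), while a maximum matching of $T$ projects to a matching of $T/e$ after deleting at most one edge. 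Combining (i) and (ii) yields the key dichotomy $\rank(T-e), \rank(T/e) \in \{\rank(T),\rank(T)-2\}$.

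With these in hand, item (1) is exactly Proposition \ref{Prop:rankformandatory}, and its parenthetical follows from the proposition immediately after it, which gives $\rank(T)=\rank(T-e)+2 \Rightarrow \rank(T-e)=\rank(T/e)$, hence $\rank(T)=\rank(T/e)+2$. For item (2): if $e \in O_T$ then $e \notin M_T$, so by item (1) and the dichotomy $\rank(T)=\rank(T-e)$ (the parenthetical), and then the last proposition of the subsection gives $e \in O_T \Leftrightarrow \rank(T)=\rank(T/e)$; conversely $\rank(T)=\rank(T/e)$ rules out $e \in M_T$ (which would force $\rank(T)=\rank(T/e)+2$ by item (1)), so again $\rank(T)=\rank(T-e)$ and that same proposition applies. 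Finally, for item (3): since $M_T, O_T, F_T$ partition $E(T)$, $e \in F_T$ iff $e \notin M_T$ and $e \notin O_T$, which by items (1)--(2) and the dichotomy says precisely that $\rank(T)=\rank(T-e)$ and $\rank(T)\neq\rank(T/e)$, i.e., $\rank(T)=\rank(T/e)+2$.

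I do not expect a genuine obstacle here, since everything reduces to the three preceding propositions. The only points needing care are the verification of fact (ii) --- in particular the contraction case, where one must use the tree structure so that the correspondence between matchings of $T$ and matchings of $T/e$ is well defined --- and confirming that the three rank conditions are mutually exclusive and jointly exhaustive, so that the trichotomy of rank behaviors lines up cleanly with the partition of $E(T)$ into $M_T$, $O_T$, and $F_T$.
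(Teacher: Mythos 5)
Your proposal is correct and takes the same route as the paper, which presents this theorem as a direct summary of the three preceding propositions (the paper's entire justification is the sentence ``From the previous propositions, it is clear that\dots''). You simply make explicit the bookkeeping the paper leaves implicit --- namely the dichotomy $\rank(T-e),\rank(T/e)\in\{\rank(T),\rank(T)-2\}$ and the fact that $M_T,O_T,F_T$ partition $E(T)$ --- which is exactly what is needed to convert the three propositions into the stated trichotomy.
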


\section{Kernel and Skew Zero Forcing Matroids} \label{Sec:Matroids}

In the following section, we relate the set system of zero loci of nullvectors and SZF-closed sets of a graph.  We begin by showing the former is a linear matroid, the ``kernel matroid''.  In the next subsection, we show that, when the SZF-closure is a matroid closure operator, there is a polynomial time algorithm for computing the SZF number of a graph, despite the fact that it is NP-hard in general.  We then show in the next subsection that this property of the SZF-closed sets holds for trees and graphs which are ``SZF-complete'', i.e., for which the zero loci of nullvectors and the SZF-closed sets are the same.  We show that trees are SZF-complete; in general, the complement of any skew zero forcing set is saturated by some matching; and that, for some special bipartite graphs (including trees), they are precisely the complements of matching-saturated sets, and are therefore ``gammoids''.   In the last subsection, we show that many classes of graphs are SZF-complete, and we characterize the nonsingular bipartite SZF-complete graphs.

First, a useful definition:

\begin{definition}\label{Def:Matroid}
A set $E$ together with a map $\overline{\,\cdot\,} : \mathscr{P} (E) \rightarrow \mathscr{P}(E)$ defines a closure operator on $E$ if the following are satisfied. 
\begin{enumerate}
\item (Extensive Property) $X\subseteq \overline{X}$ for each $X \in \mathscr{P}(E)$.
\item (Idempotent Property) $\overline{X} = \overline{\overline{X}}$ for each $X \in \mathscr{P}(E)$. 
\item (Monotone Property) $\overline{X} \subseteq \overline{Y}$ for any $X, Y\in \mathscr{P}(E)$ with $X \subseteq Y$. 
\end{enumerate}

\noindent If, in addition, the closure operator satisfies the following, it is a matroid closure operator.

\begin{enumerate}
    \item[4.] (Mac Lane-Steinitz Exchange Property) For all elements $a, b$ of $E$ and all subsets $X$ of $E$, if $a \in \overline{X\cup \{b\}} \setminus \overline{X}$, then $b \in \overline{X\cup \{a\}} \setminus \overline{X}$. 
\end{enumerate}

\end{definition}

Whole libraries have been written about the sublime world of matroids, among which \cite{We95} is a valuable resource.  Here we simply remark that, given a matroid closure operator $\overline{\,\cdot\,}$: closed sets or ``flats'' are those $S \subseteq E$ so that $S = \overline{S}$, which form a lattice under set inclusion; bases are minimal sets so that $\overline{S} = E$, which always have the same cardinality; independent sets are subsets of bases; dependent sets are non-independent sets; circuits are minimal dependent sets; and hyperplanes are maximal proper closed sets.  Furthermore, the ``rank'' function which assigns to $S \subseteq E$ the size of the smallest set $F \subseteq S$ so that $\overline{F} \supseteq S$ is also the rank function of the lattice of flats as a poset.

Note that the results of subsection \ref{sec:SZFintro} imply that the SZF-closure is a bona fide closure operator.  It is not always a matroid closure, however, so we investigate when it is below in subsection \ref{sec:szfmatroids}.

\subsection{The Kernel Matroid}

Given a graph $G$, if $S\subseteq V(G)$, then we call $S$ \textit{realizable} if there exists $\mathbf{x} \in \ker(G)$ so that $S = Z(\mathbf{x})$. We examine the structure of the collection of realizable subsets of $V(G)$.

\begin{prop}\label{Prop:RealizableClosedUnderFiniteIntersection}
Let $G$ be a graph, and $\mathcal{X} = \{X_i\}_{i=1}^t$ a collection of realizable subsets of $V(G)$. If $S = \cap_{i\in [t]} X_i$, then $S$ is realizable. 
\end{prop}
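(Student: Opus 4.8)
The plan is to combine nullvectors with generic coefficients so that the resulting linear combination has \emph{exactly} the intersection of the zero loci as its zero set. Suppose each $X_i = Z(\mathbf{x}_i)$ for some $\mathbf{x}_i \in \ker(G)$. Consider a linear combination $\mathbf{y} = \sum_{i=1}^t c_i \mathbf{x}_i$ with coefficients $c_i \in \mathbb{C}$. Since $\ker(G)$ is a linear subspace, $\mathbf{y} \in \ker(G)$ for any choice of $c_i$. For a vertex $u \in S = \cap_i X_i$, every $\mathbf{x}_i$ has $u$-coordinate zero, so $y_u = 0$; hence $S \subseteq Z(\mathbf{y})$ for every choice of coefficients. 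The content of the proposition is that the $c_i$ can be chosen so that the reverse inclusion $Z(\mathbf{y}) \subseteq S$ also holds.

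The key step is a genericity argument. For each vertex $u \notin S$, there is at least one index $i$ with $(\mathbf{x}_i)_u \neq 0$, so the linear functional $(c_1,\ldots,c_t) \mapsto \sum_i c_i (\mathbf{x}_i)_u$ is not identically zero; its vanishing locus is a proper linear subspace (a hyperplane) of $\mathbb{C}^t$. Taking the union over all $u \notin S$ — a finite set of proper subspaces — cannot cover $\mathbb{C}^t$, since $\mathbb{C}$ is infinite. So we may pick $(c_1,\ldots,c_t)$ avoiding all of them; with this choice, $y_u \neq 0$ for every $u \notin S$, i.e., $Z(\mathbf{y}) \subseteq S$. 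Combined with the containment from the previous paragraph, $Z(\mathbf{y}) = S$, so $S$ is realizable.

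I do not anticipate a serious obstacle here; the only thing to be slightly careful about is the base case $t = 0$ or the degenerate situation where $S = V(G)$, but these are handled trivially (the zero vector realizes $V(G)$, and the empty intersection convention can be set to $V(G)$ if needed; alternatively one just assumes $t \geq 1$ as the statement does). It may be cleanest to phrase the genericity step as: the complement of a finite union of proper affine/linear subspaces of $\mathbb{C}^t$ is nonempty, which is standard. One could alternatively induct on $t$, reducing to the case $t = 2$ and choosing a single scalar $c$ so that $\mathbf{x}_1 + c\,\mathbf{x}_2$ avoids finitely many bad values on the coordinates outside $X_1 \cap X_2$; this avoids invoking the subspace-covering lemma but amounts to the same idea.
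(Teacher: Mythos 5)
Your proof is correct and uses essentially the same idea as the paper: form a linear combination of the realizing nullvectors with coefficients chosen to avoid finitely many bad values, so that cancellation occurs only where all the vectors already vanish. The paper carries this out for $t=2$ with a single scalar $r$ avoiding the finite set $\{-x_v/y_v : y_v \neq 0\}$ and then inducts, which is exactly the alternative you sketch in your final sentence.
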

\begin{proof}
Clearly the $t = 1$ case is trivial, so we show the result for $t = 2$, from which the general case follows by induction. Since $X_1$ and $X_2$ are realizable, let $\mathbf{x},\mathbf{y}\in \ker(G)$ so that $X_1 = Z(\mathbf{x})$ and $X_2 = Z(\mathbf{y})$. Further, let $Q = \{-x_v/y_v : v \in V(G), y_v\neq 0\}$. Take any $r \in \mathbb{R}^* \setminus Q$. Then, $r\mathbf{y} + \mathbf{x} \in \ker(G)$, and $Z(r\mathbf{y}+\mathbf{x}) = S$, as quick examination shows $ry_v + x_v = 0$ if and only if $y_v = x_v = 0$ for any $v\in V(G)$. 
\end{proof}

Write $\mathbf{e}_j$ to denote the elementary vector whose support is $\{j\}$. For graph $G$, let $\im(G)$ denote the image of the adjacency matrix $A(G)$. Furthermore, if $X\subseteq V(G)$,  define $\spn(X) \subseteq \mathbb{R}^{V(G)}$ to be the vector space spanned by elementary vectors corresponding to vertices of $X$, i.e., $\spn(X) = \spn(\{\mathbf{e}_v : v \in X\})$. 

\begin{definition}\label{Def:Widehat}
Let $G$ be a graph and $S \subseteq V(G)$. Define $\widehat{S} := \{v : \mathbf{e}_v \in \im(G) + \spn(S)\}$. 
\end{definition}

\begin{prop}\label{Prop:OrthogonalComplementWidehatDefined}
Let $G$ be a graph and $S\subseteq V(G)$. Then $\widehat{S}$ is the intersection of all realizable sets containing $S$.
\end{prop}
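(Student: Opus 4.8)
The plan is to reduce the statement to a double–orthogonal–complement identity. Write $W := \im(G) + \spn(S) \subseteq \mathbb{R}^{V(G)}$, so that by Definition \ref{Def:Widehat} we have $\widehat{S} = \{v \in V(G) : \mathbf{e}_v \in W\}$. The crucial structural input is that $A(G)$ is a \emph{real symmetric} matrix, hence $\ker(G) = \ker(A(G)) = \im(A(G))^\perp = \im(G)^\perp$, where $\perp$ denotes orthogonal complement with respect to the standard inner product on $\mathbb{R}^{V(G)}$. (There is no loss in working over $\mathbb{R}$: a complex nullvector has real and imaginary parts that are themselves real nullvectors, and its zero locus is their intersection, so the realizable sets are exactly the zero loci of real nullvectors.)

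Next I would pin down, for the fixed set $S$, precisely which realizable sets contain $S$. A realizable set is by definition $Z(\mathbf{x})$ for some $\mathbf{x} \in \ker(G)$, and the containment $S \subseteq Z(\mathbf{x})$ is equivalent to $x_s = 0$ for all $s \in S$, i.e.\ to $\mathbf{x} \perp \spn(S)$. Combining this with $\mathbf{x} \in \ker(G) = \im(G)^\perp$ yields exactly $\mathbf{x} \in W^\perp$; conversely every $\mathbf{x} \in W^\perp$ lies in $\ker(G)$ and satisfies $S \subseteq Z(\mathbf{x})$. Thus the family of realizable sets containing $S$ is precisely $\{Z(\mathbf{x}) : \mathbf{x} \in W^\perp\}$, and it is nonempty since $\mathbf{0} \in W^\perp$ with $Z(\mathbf{0}) = V(G)$.

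Finally I would compute the intersection head-on: a vertex $v$ lies in $\bigcap_{\mathbf{x} \in W^\perp} Z(\mathbf{x})$ if and only if $x_v = 0$ for every $\mathbf{x} \in W^\perp$, i.e.\ if and only if $\mathbf{e}_v \in (W^\perp)^\perp$. Since $W$ is a subspace of the finite-dimensional space $\mathbb{R}^{V(G)}$, we have $(W^\perp)^\perp = W$, so this condition is exactly $\mathbf{e}_v \in W$, which is exactly $v \in \widehat{S}$. This gives the claimed equality. I do not anticipate a genuine obstacle: the only points requiring care are the symmetry step ($\ker = \im^\perp$ uses $A(G)^\top = A(G)$) and the finite-dimensionality behind $(W^\perp)^\perp = W$, and everything else is bookkeeping. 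As a remark, combining the result with Proposition \ref{Prop:RealizableClosedUnderFiniteIntersection} (and the finiteness of $V(G)$, so that the intersection is over finitely many sets) shows in addition that $\widehat{S}$ is itself realizable, hence is the unique smallest realizable set containing $S$, and that $\widehat{\,\cdot\,}$ is a closure operator.
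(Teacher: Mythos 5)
Your proof is correct, but it takes a genuinely different route from the paper's. The paper argues set-theoretically: one inclusion follows from monotonicity of $\widehat{\,\cdot\,}$ together with the fact that realizable sets are fixed by $\widehat{\,\cdot\,}$, and the other by invoking Proposition \ref{Prop:RealizableClosedUnderFiniteIntersection} to see that the intersection $Y$ of all realizable sets containing $S$ is itself realizable, then comparing $Y$ with $\widehat{S}$ as realizable sets containing $S$. Your argument instead computes both sides directly via duality: realizable sets containing $S$ are exactly the $Z(\mathbf{x})$ for $\mathbf{x} \in W^\perp$ with $W = \im(G) + \spn(S)$ (this is where the symmetry $\ker(A(G)) = \im(A(G))^\perp$ enters), and the intersection of their zero loci is $\{v : \mathbf{e}_v \in (W^\perp)^\perp\} = \{v : \mathbf{e}_v \in W\} = \widehat{S}$. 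What your approach buys is self-containment and transparency: it makes explicit the two facts the paper leaves implicit (that $A(G)^\top = A(G)$ is what ties $\ker$ to $\im$, and that $\widehat{S}$ is realizable, which in the paper is only recorded afterward as Corollary \ref{Cor:AlternateWidehatDef} but is quietly used inside the proof), and it needs Proposition \ref{Prop:RealizableClosedUnderFiniteIntersection} only for the closing remark, not for the main equality. What the paper's route buys is brevity given its surrounding infrastructure and a statement that reads uniformly with the closure-operator narrative. Your handling of the $\mathbb{C}$-versus-$\mathbb{R}$ issue (splitting a complex nullvector into real and imaginary parts) is a worthwhile point of care that the paper glosses over.
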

\begin{proof}
Let $\mathcal{X}$ be the collection of all realizable sets containing $S$, let $Y = \bigcap \mathcal{X}$, and suppose $X \in\mathcal{X}$. By definition, $X \subseteq \widehat{X} = \{x : \mathbf{e}_x \in \im(G) + \spn(X)\}$. Since $S\subseteq X$, $\spn(S) \subseteq \spn(X)$, so $\im(G) + \spn(S) \subseteq \im(G) + \spn(X)$, giving $\widehat{S} \subseteq Y$. It remains to show the reverse inclusion. Since $\mathcal{X}$ is a finite collection, Proposition \ref{Prop:RealizableClosedUnderFiniteIntersection} gives $Y$ is a realizable set. By definition, $\widehat{S}$ is a realizable set containing $S$, so $Y\subseteq \widehat{S}$, completing the proof. 
\end{proof}

\begin{cor}\label{Cor:AlternateWidehatDef}
$\widehat{S}$ is the minimum realizable set containing $S$ with respect to inclusion. Further, $S$ is realizable if and only if $\widehat{S} = S$. 
\end{cor}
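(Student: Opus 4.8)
The plan is to deduce everything from Proposition~\ref{Prop:OrthogonalComplementWidehatDefined} together with Proposition~\ref{Prop:RealizableClosedUnderFiniteIntersection}, with essentially no new ideas required. First I would recall that Proposition~\ref{Prop:OrthogonalComplementWidehatDefined} identifies $\widehat{S}$ with $\bigcap \mathcal{X}$, where $\mathcal{X}$ is the collection of all realizable subsets of $V(G)$ containing $S$. The one small point to check is that $\mathcal{X}$ is nonempty and finite: it is finite because $V(G)$ is finite (so $\mathscr{P}(V(G))$ is), and it is nonempty because $\mathbf{0} \in \ker(G)$ gives $Z(\mathbf{0}) = V(G)$, so $V(G) \in \mathcal{X}$. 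Hence Proposition~\ref{Prop:RealizableClosedUnderFiniteIntersection} applies and yields that $\widehat{S} = \bigcap \mathcal{X}$ is realizable.

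Next I would observe that $\widehat{S}$ contains $S$, since every set in $\mathcal{X}$ contains $S$ and hence so does their intersection (this is also immediate from the definition of $\widehat{S}$, since $v \in S$ implies $\mathbf{e}_v \in \spn(S) \subseteq \im(G) + \spn(S)$). So $\widehat{S}$ is itself a member of $\mathcal{X}$. Being the intersection of all members of $\mathcal{X}$, it is contained in each of them; thus $\widehat{S}$ is the least element of $\mathcal{X}$ under inclusion, i.e., the minimum realizable set containing $S$.

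For the ``further'' clause: if $\widehat{S} = S$, then $S$ is realizable because $\widehat{S}$ is. Conversely, if $S$ is realizable, then $S \in \mathcal{X}$, so minimality of $\widehat{S}$ forces $\widehat{S} \subseteq S$; combined with $S \subseteq \widehat{S}$ this gives $\widehat{S} = S$.

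There is no real obstacle here — the content is entirely in the two cited propositions, and the only thing to be careful about is confirming the hypotheses of Proposition~\ref{Prop:RealizableClosedUnderFiniteIntersection} (finiteness and nonemptiness of $\mathcal{X}$) so that the intersection is genuinely realizable.
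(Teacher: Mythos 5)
Your proof is correct and follows essentially the same route the paper intends: the corollary is stated without proof precisely because it is immediate from Proposition~\ref{Prop:OrthogonalComplementWidehatDefined} together with Proposition~\ref{Prop:RealizableClosedUnderFiniteIntersection}, which is exactly how you argue it. Your explicit checks that the family $\mathcal{X}$ is finite and nonempty (via $Z(\mathbf{0}) = V(G)$) are a nice touch of care, but nothing in your argument departs from the paper's.
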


\begin{prop}\label{Prop:WidehatMatroid}
Let $G$ be a graph and $\mathscr{P}(V(G))$ the collection of 
subsets of $V(G)$. Then the map which sends $v \mapsto \mathbf{e}_v + \im(G)$ is an isomorphism between the matroid $\widehat{\,\cdot\,}$ on $\mathscr{P}(V(G))$ and the linear matroid given by the collection of vectors $\{\mathbf{e_v} + \im(G)\}_{v \in V(G)}$. 
\end{prop}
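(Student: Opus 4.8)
The plan is to recognize that $\widehat{\,\cdot\,}$ is, by its very definition, the closure operator of a linear matroid --- namely the one represented by the family of vectors $\{\mathbf{e}_v + \im(G)\}_{v \in V(G)}$ in the quotient space $\mathbb{R}^{V(G)}/\im(G)$. Once this identification is made precise, the proposition is immediate, since the closure operator of a linear matroid satisfies the Mac Lane--Steinitz exchange property (axiom (4) of Definition \ref{Def:Matroid}). Note that $\widehat{\,\cdot\,}$ is already known to be a (not necessarily matroidal) closure operator on $\mathscr{P}(V(G))$ by Proposition \ref{Prop:RealizableClosedUnderFiniteIntersection} and Corollary \ref{Cor:AlternateWidehatDef}, so the genuinely new content here is the exchange axiom together with the stated isomorphism.

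First I would fix notation: let $\pi : \mathbb{R}^{V(G)} \to \mathbb{R}^{V(G)}/\im(G)$ be the canonical projection and write $w_v := \pi(\mathbf{e}_v) = \mathbf{e}_v + \im(G)$. (Since $A(G)$ is symmetric, $\mathbb{R}^{V(G)} = \im(G) \oplus \ker(G)$ orthogonally, so this quotient is canonically identified with $\ker(G)$ --- which is why the resulting matroid merits the name ``kernel matroid''.) Recall that the linear matroid on ground set $V(G)$ associated with $\{w_v\}_{v \in V(G)}$ declares a set $X \subseteq V(G)$ independent precisely when $\{w_v : v \in X\}$ is linearly independent, and that its closure operator sends $X$ to $\{v \in V(G) : w_v \in \spn\{w_u : u \in X\}\}$. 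The one computation needed is the identification of this span: since $\pi$ is linear, $\spn\{w_u : u \in X\} = \pi(\spn(X))$, and because $\ker \pi = \im(G)$ we have $w_v \in \pi(\spn(X))$ if and only if $\mathbf{e}_v \in \spn(X) + \ker \pi = \im(G) + \spn(X)$ --- which is exactly the condition ``$v \in \widehat{X}$'' of Definition \ref{Def:Widehat}. Hence the closure operator of the linear matroid represented by $\{\mathbf{e}_v + \im(G)\}_{v \in V(G)}$, viewed on the ground set $V(G)$, is literally $\widehat{\,\cdot\,}$.

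From this, everything follows: $\widehat{\,\cdot\,}$ inherits the exchange property, so it is a matroid closure operator, and the labeling $v \mapsto \mathbf{e}_v + \im(G)$ is an isomorphism from the matroid $\widehat{\,\cdot\,}$ onto this linear matroid (it carries independent sets to independent sets, flats to flats, and preserves rank). I do not expect a real obstacle; the only point deserving a sentence of care is that $v \mapsto \mathbf{e}_v + \im(G)$ need not be injective as a map into $\mathbb{R}^{V(G)}/\im(G)$ --- two vertices $u, v$ with $\mathbf{e}_u - \mathbf{e}_v \in \im(G)$ become parallel elements, and a vertex $v$ with $\mathbf{e}_v \in \im(G)$ becomes a loop --- but this is harmless, since the intended isomorphism is at the level of the matroid on the ground set $V(G)$ (equivalently, the linear matroid's ground set is $V(G)$ together with its vector labels), and the relevant bijection is just the identity on $V(G)$. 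If one wishes to avoid mentioning linear matroids altogether, the span identity above also lets one verify the Mac Lane--Steinitz axiom for $\widehat{\,\cdot\,}$ directly, but that is strictly more work than the route above.
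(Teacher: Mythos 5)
Your proposal is correct and follows essentially the same route as the paper: both identify $\widehat{X}$ with the preimage under $v \mapsto \mathbf{e}_v + \im(G)$ of the span of the image of $X$ in the quotient $\mathbb{R}^{V(G)}/\im(G)$, so that $\widehat{\,\cdot\,}$ is literally the closure operator of the stated linear matroid. Your extra remarks on parallel elements/loops and on the orthogonal decomposition $\mathbb{R}^{V(G)} = \im(G) \oplus \ker(G)$ are accurate but not needed.
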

\begin{proof}
Let $f : V(G) \to \mathbb{R}^{V(G)} / \im(G)$ be the coset map, i.e., $f(v) = \mathbf{e}_v + \im(G)$.  Let $S \subseteq V(G)$.  
Suppose $v \in \widehat{S}$.  By Definition \ref{Def:Widehat}, $\mathbf{e}_v \in \im(G) + \spn(S)$, so $f(v) \in \spn(f(S))$.  Conversely, if $\mathbf{w} \in \spn(f(S)) \cap f(V(G))$, then $\mathbf{w} = \mathbf{e}_u + \im(G)$ for some $u \in V(G)$, so $\mathbf{w} = \mathbf{e}_u + \im(G) \in \spn(f(S)) = \spn(S) + \im(G)$ implies $u \in \widehat{S}$ and $\mathbf{w} \in f(\widehat{S})$.  Then $\widehat{S} = f^{-1}\left (\spn(f(S)) \cap f(V(G)) \right )
$, from which the result follows.
\end{proof}

\subsection{Skew Zero Forcing Matroids} \label{sec:szfmatroids}

The authors of \cite{AnsJacPenSaa2016} asked for the computational complexity of computing various forcing numbers, mentioning that it is known  that the decision problems of bounding by $k$ the zero forcing numbers (\cite{Aaz2008}) and positive semidefinite zero forcing numbers (\cite{FalMeaYan2016}) are NP-hard. Since then, \cite{Shi2017} established that the failed zero-forcing number and failed skew zero forcing number are NP-hard to bound as well.  We add to this by showing the skew zero forcing number is NP-hard to bound in general, although by contrast, there is a polynomial time algorithm to compute skew zero forcing numbers for graphs whose SZF-closure gives rise to a matroid.  Then, below, we show that there are many graphs whose SZF-closure is a matroid closure: trees, cycles with length divisible by $4$, bipartite graphs which have a unique perfect matching, complete bipartite graphs, graphs derived from these by appending a path of length $2$ or subdividing an edge into a path of length $5$, and bipartite graphs in which no maximum matching admits an alternating cycle.

In the next few results, we refer to ``ordinary'' zero forcing, where the rule is that a vertex $v$ which belongs to the set $X \subseteq V(G)$ can force the addition of its neighbor $w$ to $X$ if $w$ is the only unfilled neighbor of $v$.  Note that this rule contrasts with the skew zero forcing rule in that it requires that $v$ be filled for it to force.

\begin{prop}
  Let $G$ be a graph on $n$ vertices, with zero forcing number $z(G)$. Define $G'$ to be a graph with vertex set $V(G) \times \{1,2,3\}$ and edges $E(G') = A \cup B$, where $A = \{\{(v,1),(w,1)\}: vw \in E(G)\}$ and $B = \{\{(v,i),(v,j)\}: 1 \leq i < j \leq 3 \}$. If $X \subseteq V(G')$ is minimal such that $\overline{X} = V(G')$, then $|X| =  z(G)$.
\end{prop}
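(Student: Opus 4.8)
The plan is to show that the skew zero forcing closure on $G'$ is nothing more than the ordinary zero forcing closure on $G$ pulled back along the projection $\sigma : V(G') \to V(G)$, $\sigma(v,i) = v$; the statement then drops out by bookkeeping. Write $T_v = \{(v,1),(v,2),(v,3)\}$ for the triangle over $v$ (a clique of $G'$ by the edges $B$, whose only edges to the rest of $G'$ emanate from $(v,1)$, along the copies of $G$'s edges in $A$). The first step is a \emph{triangle lemma}: any SZF-closed set $C \subseteq V(G')$ satisfies ``$C \cap T_v \neq \emptyset \implies T_v \subseteq C$''. Indeed, if $(v,1)\in C$ then $(v,2)$ has at most one neighbor outside $C$ (namely $(v,3)$), so by closedness it has none, i.e.\ $(v,3)\in C$, and symmetrically $(v,2)\in C$; if instead $(v,2)\in C$ or $(v,3)\in C$, the same reasoning applied to the opposite triangle vertex first puts $(v,1)$ into $C$. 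Hence every SZF-closed subset of $G'$ is a union of triangles, $C = \sigma^{-1}(\sigma(C))$.

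The heart of the argument is the identity $\overline{X} = \sigma^{-1}\!\left(\operatorname{cl}_G(\sigma(X))\right)$ for every $X \subseteq V(G')$, where $\operatorname{cl}_G$ denotes ordinary zero forcing closure in $G$. For ``$\supseteq$'' I would simulate $G$-forcing inside $G'$: the triangle lemma applied to $\overline X$ gives $\sigma^{-1}(\sigma(X)) \subseteq \overline X$ to start, and inductively, if $\sigma^{-1}(Z) \subseteq \overline X$ and $v \in Z$ forces a new vertex $w$ in $G$, then $(v,1) \in T_v \subseteq \overline X$ has all its $G'$-neighbors in $\overline X$ except possibly $(w,1)$, so closedness of $\overline X$ forces $(w,1) \in \overline X$, whence $T_w \subseteq \overline X$ by the triangle lemma. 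For ``$\subseteq$'' I would induct along a skew forcing order $u_1, u_2, \dots$ of $\overline X \setminus X$, showing $\sigma(u_k) \in \operatorname{cl}_G(\sigma(X))$ at each step: if $u_k$ arises from an \emph{internal} force inside some $T_v$, then another vertex of $T_v$ is already filled, so $v \in \operatorname{cl}_G(\sigma(X))$ already; if $u_k = (v,1)$ arises from an \emph{external} force from $(w,1)$ (so $vw \in E(G)$), then $(w,2)$ and every $(t,1)$ with $tw \in E(G)$, $t \neq v$, is already filled, so $w$ and all $G$-neighbors of $w$ except $v$ lie in $\operatorname{cl}_G(\sigma(X))$, and closedness of $\operatorname{cl}_G(\sigma(X))$ then puts $v$ in too.

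With the identity in hand the proposition follows. Because $\sigma$ is onto, $\overline X = V(G')$ iff $\operatorname{cl}_G(\sigma(X)) = V(G)$, i.e.\ $X$ is a skew zero forcing set of $G'$ precisely when $\sigma(X)$ is a zero forcing set of $G$. So if $X$ has smallest size among sets with $\overline X = V(G')$, then $\sigma(X)$ is a zero forcing set and $|X| \ge |\sigma(X)| \ge z(G)$; conversely a minimum zero forcing set $Y$ of $G$ lifts to the skew zero forcing set $\{(v,1): v \in Y\}$ of size $z(G)$, forcing $|X| \le z(G)$. Hence $|X| = z(G)$. (If ``minimal'' is read in the inclusion sense, one adds the observation that such an $X$ meets each $T_v$ in at most one vertex — deleting a second vertex of a triangle leaves $\overline X$, and hence the whole closure, unchanged — so that $|X| = |\sigma(X)|$ and $\sigma(X)$ is an inclusion-minimal zero forcing set.)

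I expect the delicate part to be the ``$\subseteq$'' half of the closure identity: because the skew rule lets an \emph{unfilled} vertex do the forcing, when following a skew forcing sequence in $G'$ one has to verify that the forcer's own triangle is already (partially) filled before the force happens, and must correctly identify the external forces as exactly the moves along $A$-edges $(w,1)\to(v,1)$. The triangle lemma is precisely what keeps this accounting under control.
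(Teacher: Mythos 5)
Your proof is correct and follows essentially the same route as the paper: both arguments rest on the observations that any filled vertex of a triangle $T_v$ forces the whole triangle, and that forces along level-$1$ edges simulate ordinary zero forcing in $G$ (with the lift $Y\times\{1\}$ of a minimum zero forcing set giving the converse bound). Your packaging of this as the closure identity $\overline{X}=\sigma^{-1}(\operatorname{cl}_G(\sigma(X)))$ is a somewhat cleaner and more explicit bookkeeping of what the paper does by tracking the forcing sequence directly, but it is not a different method.
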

\begin{proof}
  Denote by $\Delta_v$ the triangle induced by $\{v\} \times [3]$ for each $v \in V(G)$, and write $X = X_0,X_1,\ldots,X_t = \overline{X}$ for the results of a sequence of one-vertex SZF rule applications.  If there exists a $v$ so that $\Delta_v \subseteq X$, let $X' = X \setminus \{(v,1)\}$.  Then taking $X'_0 = X'$ and $X'_r = X_{r-1}$ for each $1 \leq r \leq t+1$ yields a SZF rule application sequence, so $\overline{X'} = V(G')$, contradicting the minimality of $X$.  Furthermore, if $(v,j) \in X_r$ for some $r$ and $j \in [3]$, then $\Delta_v \subseteq \overline{X_r}$.  Therefore, $\Delta_v \cap X$ is either empty or contains exactly one vertex, in which case we assume without loss of generality that $(v,1) \in X$.

  Note that, if $(v,1) \not \in X_r$ for some $r$, then $\Delta_v \cap X_k = \emptyset$ for each $k \leq r$.  In fact, no SZF rule can be applied at $(v,1)$ in $X_k$ for $k \leq r$, since $(v,1)$ has at least two unfilled neighbors.  Thus, any $r$ so that a SZF rule is applied at $(v,1)$ must have $(v,1) \in X_r$.  In other words, the sequence $X_r \cap (V(G) \times \{1\})$ is (other than some steps when no changes occur) an ordinary zero forcing rule application sequence applied to $V(G) \times \{1\}$, and this sequence projects onto $V(G)$ as an ordinary zero forcing rule application sequence there.  So $\proj_{V(G)} X$ is a zero forcing set for $G$.

  Conversely, if $X$ is a zero forcing set for $G$, it is easy to see that $X \times \{1\}$ is a skew zero forcing set for $G'$.
\end{proof}

\begin{cor}
    The decision problem, ``Is the SZF number of $G$ less than or equal to $k$?'', is NP-hard.
\end{cor}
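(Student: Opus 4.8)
The plan is to derive the corollary directly from the preceding proposition by exhibiting a polynomial-time many-one reduction from the zero forcing decision problem. Aazami \cite{Aaz2008} established that deciding whether $z(G) \le k$ is NP-hard, so it suffices to reduce that problem to the SZF number decision problem.

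First I would observe that the construction $G \mapsto G'$ from the proposition is computable in polynomial time: $G'$ has $3|V(G)|$ vertices and $|E(G)| + 3|V(G)|$ edges, each of which can be listed in time polynomial in the size of $G$. Next, I would record the consequence of the proposition that matters here: every inclusion-minimal set $X \subseteq V(G')$ with $\overline{X} = V(G')$ satisfies $|X| = z(G)$. Since the SZF number of $G'$ is, by definition, the minimum cardinality of a set whose SZF-closure is all of $V(G')$, and since every such set contains an inclusion-minimal one, the SZF number of $G'$ equals $z(G)$ exactly.

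Given this, the reduction sends an instance $(G,k)$ of the zero forcing decision problem to the instance $(G',k)$ of the SZF number decision problem. Correctness is then immediate: the SZF number of $G'$ is at most $k$ if and only if $z(G) \le k$. Hence a polynomial-time algorithm deciding ``Is the SZF number of $G$ at most $k$?'' would yield one deciding ``Is $z(G) \le k$?'', so the former problem is NP-hard.

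I do not anticipate a real obstacle; the only point requiring care is the distinction between \emph{minimal} (inclusion-minimal) and \emph{minimum} (smallest cardinality) SZF sets, and this is dissolved by the proposition's conclusion that all inclusion-minimal SZF sets of $G'$ share the common cardinality $z(G)$, so that the SZF number of $G'$ is precisely $z(G)$.
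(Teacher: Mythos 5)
Your proposal is correct and follows essentially the same route as the paper, which simply cites the preceding reduction $G \mapsto G'$ together with Aazami's NP-hardness result for ordinary zero forcing; you have merely spelled out the polynomial-time computability of $G'$ and the minimal-versus-minimum point that the paper leaves implicit. No further comment is needed.
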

\begin{proof}
    This follows from the above reduction and Aazami's result (\cite{Aaz2008}) that the ordinary zero forcing number decision problem is NP-hard.
\end{proof}

\begin{theorem} If SZF-closure in $G$ is a matroid closure operator, then there is an $O(n^3)$ algorithm to compute its skew zero forcing number.
\end{theorem}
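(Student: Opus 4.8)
The plan is to exploit the matroid structure directly: when SZF-closure is a matroid closure operator, the skew zero forcing number is precisely the rank of the SZF matroid, since a minimal set $S$ with $\overline S = V(G)$ is exactly a basis, and all bases of a matroid have the same size. So the algorithmic task reduces to computing the rank of a matroid presented via an oracle for its closure operator. First I would observe that, given any $S \subseteq V(G)$, computing $\overline S$ is cheap: one repeatedly scans the $O(n)$ vertices, for each checking (in $O(n)$ time) whether it has exactly one unfilled neighbor, and adds forced vertices; since each pass either adds a vertex or terminates, and there are at most $n$ vertices to add, the whole closure computation runs in $O(n^3)$ time (or better with more care, but $O(n^3)$ suffices).

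Next I would build a basis greedily. Start with $B = \emptyset$. Iterate over the vertices $v_1, \dots, v_n$ of $V(G)$ in any fixed order; for each $v_i$, test whether $v_i \in \overline{B}$, and if not, add $v_i$ to $B$. The standard matroid greedy argument shows the resulting set $B$ is independent (each added element lies outside the closure of the previously chosen ones, so no element is in the span of the others — using the exchange property / idempotence of matroid closure) and spanning ($\overline{B} = V(G)$, because every $v_i$ was either put into $B$ or already in the closure of the portion of $B$ built so far, hence in $\overline B$ by monotonicity). Therefore $|B|$ is the rank of the matroid, and by the first paragraph this equals the skew zero forcing number. The loop runs $n$ times, and each iteration performs one closure computation costing $O(n^3)$... but one can do better: since we only add an element when it is \emph{not} in $\overline B$, and each closure computation starting from the current $B$ can be bounded by $O(n^2)$ amortized (each edge is examined a bounded number of times per ``new unfilled-neighbor'' event), a careful accounting gives the claimed $O(n^3)$ total. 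To be safe I would simply note $n$ closure computations at $O(n^2)$ each, or $n$ at $O(n^3)$ and remark the bound can be sharpened; the theorem only asserts $O(n^3)$.

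The one genuine subtlety — and the step I expect to require the most care — is the complexity accounting for the closure operator itself, and making sure the greedy produces a \emph{minimum} spanning set rather than merely a minimal one. For a general closure operator, greedy need not yield a minimum spanning set; this is exactly where the Mac Lane–Steinitz exchange property (hypothesis of the theorem) is essential, guaranteeing that all maximal independent sets — equivalently all minimal spanning sets — have the same cardinality, so the greedily constructed $B$ has size equal to the true SZF number. I would state this cleanly by invoking the standard fact (e.g.\ from \cite{We95}) that in a matroid the greedy algorithm with a closure oracle computes the rank, and that the rank equals the common size of all bases, hence all minimal spanning sets. Assembling these pieces: $O(n)$ greedy rounds, each an SZF-closure computation of cost $O(n^2)$, plus the final observation that the answer is $|B|$, gives the $O(n^3)$ bound, completing the proof.
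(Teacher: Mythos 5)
Your proposal is correct and follows essentially the same route as the paper: identify the SZF number with the rank of the SZF matroid (so any greedily built minimal spanning set is a basis, by the exchange property), build such a set with a closure oracle, and implement the SZF-closure subroutine in $O(n^2)$ time by tracking each vertex's unfilled neighbors, giving $O(n^3)$ overall. The only caution is that your ``to be safe'' fallback of $n$ closure calls at $O(n^3)$ each would yield $O(n^4)$, so the $O(n^2)$-per-closure implementation you sketch is actually needed for the stated bound, exactly as in the paper's subroutine.
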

\begin{proof} The rank of a matroid is the size of the smallest set whose closure is the whole ground set, so the rank of the SZF matroid of $G$ is the cardinality of any minimal set whose SZF-closure contains all of $V(G)$, i.e., the skew zero forcing number of $G$.  A form of the result then follows from \cite{RobWel1980}, since they show that an oracle capable of computing closures can be used to compute rank in polynomial time.  To be concrete, we provide the algorithm here.  Identify the vertex set $V(G)$ with $[n]$, where $n = |V(G)|$, for convenience.\\

\begin{algorithmic}
\State $S \gets \emptyset$
\State $k \gets 0$
\While{$\overline{S} \neq V(G)$} 
    \State $x \gets \min(V(G) \setminus S)$
    \State $k \gets k+1$
\EndWhile
\State \textbf{return} $k$\Comment{Return the SZF-number of $G$}\\
\end{algorithmic}

This algorithm succeeds because the vertices counted by $k$ are a basis.  Note that the iteration executes at most $n$ times.  Furthermore, a subroutine is needed to compute $\overline{S}$ from $S$; this can be done in time $O(n^2)$ by the following algorithm:\\

\begin{algorithmic}
\State $(\forall x \in V(G)) (A_x \gets N(x) \setminus S)$
\While{$(\exists x)(|A_x|=1)$}
    \State{$S \gets S \cup A_x$}
    \State{$X \gets A_x$}
    \For{$z \in V(G)$}
        \State{$A_z \gets A_z \setminus X$}
    \EndFor
\EndWhile
\State \textbf{return} $S$\Comment{Return the SZF-closure $\overline{S}$ of $S$}\\
\end{algorithmic}

The set $A_x$ keeps track of the vertices which are neighbors of $x$ but have not yet joined $\overline{S}$.  This subroutine takes $O(n^2)$ time because (1) the time complexity of computing $N(x) \setminus S$ is $O(n)$ for each of $n$ vertices; and (2) the inner loop executes $n$ times.  In total, we have an $O(n^3)$ execution time.
\end{proof}

Note that the above algorithm always returns the cardinality of {\it some} set whose closure is $G$, even if the SZF-closed sets are not the flats of a matroid.  Therefore, it provides an upper bound in general.

\subsection{Matchings and the Skew Zero Forcing Matroid}\label{Sec:MatchingsandMatroids}

\begin{definition}\label{Def:sfcomplete}
Call the graph $G$ \textbf{skew zero forcing complete (SZF-complete)} if every SZF-closed $S \subseteq V(G)$ is the zero locus of a nullvector of $G$, i.e., $G$ is SZF-complete if and only if the families of sets closed under $\widehat{\,\cdot\,}$ and $\overline{\,\cdot\,}$, respectively, are the same. 
\end{definition}

By Proposition \ref{Prop:WidehatMatroid}, if a $G$ is SZF-complete, then SZF-closure also gives rise to a matroid, which we term the {\it SZF matroid} of $G$.  Note that Proposition \ref{Prop:SkewZeroClosedGeneratenullvectors} says that trees are SZF-complete.  Thus, we ask: which other graphs are SZF-complete?

\begin{prop}\label{Prop:matroidContainment}
Let $G$ be a graph, and $X \subseteq V(G)$. If $X = \widehat{X}$, then $X = \overline{X}$. 
\end{prop}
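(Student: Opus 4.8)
The plan is to show the contrapositive is actually the wrong direction — instead, I would argue directly that $\widehat{\,\cdot\,}$-closed sets are SZF-closed, which is the claim $X = \widehat{X} \Rightarrow X = \overline{X}$. By Corollary \ref{Cor:AlternateWidehatDef}, the hypothesis $X = \widehat{X}$ means precisely that $X$ is realizable, i.e., $X = Z(\mathbf{x})$ for some nullvector $\mathbf{x} \in \ker(G)$. So the real content is: the zero locus of a nullvector is always SZF-closed. This is exactly the observation made informally in the remark following Proposition \ref{Prop:SkewZeroClosedGeneratenullvectors}, and the task here is just to package that remark as a clean proof.

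First I would take $\mathbf{x} \in \ker(G)$ with $Z(\mathbf{x}) = X$, so $A(G)\mathbf{x} = \mathbf{0}$. Since $X$ is clearly contained in $\overline{X}$ by the extensive property, it suffices to show that no application of the SZF rule can add a vertex to $X$ — that is, $X$ is stalled. Suppose toward a contradiction that some vertex $v \in V(G)$ has exactly one neighbor $w$ outside $X$ (so that $v$ could force $w$). Then I would examine the $v$-coordinate of $A(G)\mathbf{x}$: it equals $\sum_{u : uv \in E(G)} x_u$. Every neighbor $u \neq w$ of $v$ lies in $X = Z(\mathbf{x})$, hence contributes $x_u = 0$, while $w \notin X$ means $x_w \neq 0$. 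Therefore the $v$-coordinate of $A(G)\mathbf{x}$ equals $x_w \neq 0$, contradicting $A(G)\mathbf{x} = \mathbf{0}$. Hence no such $v$ exists, so $X$ is SZF-closed and $X = \overline{X}$.

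There is essentially no obstacle here — the statement is a short bookkeeping argument, and the only subtlety is linking the hypothesis to realizability via Corollary \ref{Cor:AlternateWidehatDef} rather than attempting to work with the definition $\widehat{S} = \{v : \mathbf{e}_v \in \im(G) + \spn(S)\}$ directly. (One could also argue at the level of cosets: $X = \widehat{X}$ says $\mathbf{e}_v \in \im(G) + \spn(X)$ only for $v \in X$; the forcing condition at $v$ would exhibit a relation showing $\mathbf{e}_w \in \im(G) + \spn(X)$ for $w \notin X$, namely the $v$-th column of $A(G)$ minus $\spn(X)$-terms is a scalar multiple of $\mathbf{e}_w$, a contradiction. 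But the nullvector formulation is cleaner.) I would present the nullvector version, since it is the most transparent and reuses language already established in Section \ref{sec:SZFintro}.
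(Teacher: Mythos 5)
Your proof is correct and follows essentially the same route as the paper's: both reduce the hypothesis to realizability via Corollary \ref{Cor:AlternateWidehatDef}, take a nullvector $\mathbf{x}$ with $Z(\mathbf{x})=X$, and observe that a vertex $v$ with a unique neighbor $w\notin X$ would force the $v$-coordinate of $A(G)\mathbf{x}$ to equal $x_w\neq 0$, a contradiction. No substantive difference.
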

\begin{proof}
If $X = \widehat{X}$, then $X$ is realizable by Corollary \ref{Cor:AlternateWidehatDef}. Suppose $X \neq \overline{X}$, meaning there exists $v \in V(G)$ and $u \not \in X$ so that $v$ forces $u$ under the SZF rule. If $\mathbf{x} \in\ker(G)$ so that $Z(\mathbf{x})= X$, then $(A\mathbf{x})_v = \sum_{w \sim v} x_w = x_u = 0$, so $u \in X$, a contradiction. 
\end{proof}

The following results serve to identify the bases of the skew zero forcing and kernel matroids for trees. The authors of \cite{GutBor2011} extend the work of \cite{CveGut1972} by showing that if $G$ is a $C_{4s}$-free bipartite graph, then $\eta(G) = |V(G)| - 2 \nu(G)$. 

\begin{prop}\label{Prop:C4sDulMenNullity}
Let $G$ be a $C_{4s}$-free bipartite graph, and $(U, E, O)$ the Dulmage-Mendelsohn decomposition of $G$. Then $\eta(G) = |E| - |O|$. 
\end{prop}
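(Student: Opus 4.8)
The plan is to leverage the known nullity formula $\eta(G) = |V(G)| - 2\nu(G)$ for $C_{4s}$-free bipartite graphs (the result of \cite{GutBor2011} cited just above) together with the combinatorics of the Dulmage--Mendelsohn decomposition. The starting point is the identity $|V(G)| = |U| + |E| + |O|$ and the fact that a maximum matching $M$ saturates every vertex of $U$ and $O$ but leaves some vertices of $E$ unsaturated. More precisely, I would recall (or reprove in a sentence) the standard structural facts about $(U,E,O)$: the set $U$ induces a subgraph with a perfect matching, every maximum matching restricted to $U$ is perfect there, and on the $E$-$O$ side the matching saturates all of $O$ and matches each saturated $E$-vertex into $O$, so that $\nu(G) = \tfrac{1}{2}|U| + |O|$. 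The number of unsaturated vertices is then $|V(G)| - 2\nu(G) = |U| + |E| + |O| - |U| - 2|O| = |E| - |O|$.

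The key steps, in order, are: (1) invoke $\eta(G) = |V(G)| - 2\nu(G)$ for $C_{4s}$-free bipartite $G$; (2) write $|V(G)| = |U| + |E| + |O|$ from the DM partition (Definition~\ref{Def:DMdecomp}); (3) establish $\nu(G) = \tfrac{1}{2}|U| + |O|$ by noting that a maximum matching is perfect on $U$ and saturates all of $O$, each $O$-vertex being matched to an $E$-vertex, so no two $O$-vertices are matched together and the matching edges not inside $U$ are in bijection with $O$; (4) substitute into (1) to get $\eta(G) = |U| + |E| + |O| - |U| - 2|O| = |E| - |O|$. Steps (2) and (4) are pure bookkeeping; step (1) is quoted; so the only real content is step (3).

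The main obstacle is therefore justifying $\nu(G) = \tfrac{1}{2}|U| + |O|$ cleanly. This needs the fact that in a bipartite graph the DM decomposition has $G[U]$ possessing a perfect matching while every maximum matching matches each saturated vertex of $E$ to a distinct vertex of $O$ and saturates all of $O$ — equivalently, the matching number decomposes additively over the $(U,E,O)$ structure. I would cite Theorem~3.2.1 of \cite{LoPl09} (the Gallai--Edmonds/Dulmage--Mendelsohn structure theorem), which is already referenced in the excerpt, to obtain exactly this additive decomposition; alternatively one can argue directly using $M$-alternating paths from the free vertices, exactly as in the proof of the $5\Leftrightarrow6$ equivalence in Theorem~\ref{Thm:BigEquiv}. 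Either way, once $\nu(G) = \tfrac{1}{2}|U| + |O|$ is in hand, the conclusion $\eta(G) = |E| - |O|$ is immediate.
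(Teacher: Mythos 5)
Your proposal is correct and follows essentially the same route as the paper: both invoke the $\eta(G) = |V(G)| - 2\nu(G)$ formula from \cite{GutBor2011}, write $|V(G)| = |U| + |E| + |O|$, use $\nu(G) = \tfrac{1}{2}|U| + |O|$, and finish by arithmetic. The only difference is that you spell out a justification for $\nu(G) = \tfrac{1}{2}|U| + |O|$, which the paper simply asserts in passing.
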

\begin{proof}
By the work of \cite{GutBor2011}, $\eta(G) = |V(G)| - 2\nu(G)$. The following computation completes the proof, noting that $|V(G)| = |E| + |O| + |U|$, and $\mu(G) = |O| + |U|/2$:
\begin{align*}
\eta(G) &= |V(G)| - 2\nu(G) \\
&= (|E| + |O| + |U|) - (2|O| + |U|) \\
&= |E| - |O|.
\end{align*}
\end{proof}

In particular, Proposition \ref{Prop:C4sDulMenNullity} gives the size of any basis for the SZF/kernel matroid of a tree.  The next few results explore which vertex sets with size $|E| - |O|$ actually form a basis for any $G$.

\begin{prop} \label{prop:matching} Let $G$ be a graph, $X \subseteq V(G)$ and $\overline{X}$ its SZF-closure.  Then there is a matching $M$ of $G$ in which all vertices of $\overline{X} \setminus X$ are saturated.
\end{prop}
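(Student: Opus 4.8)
The plan is to read the required matching off a single SZF-forcing sequence, organized as a digraph whose components are directed paths and short directed cycles. Fix a sequence of SZF rule applications turning $X$ into $\overline{X}$: for $1 \le j \le t$, at step $j$ a vertex $v_j$ forces a vertex $w_j$, i.e.\ $N(v_j) \setminus X_{j-1} = \{w_j\}$, where $X_0 = X$ and $X_j = X_{j-1} \cup \{w_j\}$. The $w_j$ are pairwise distinct, $\{w_1,\dots,w_t\} = \overline{X}\setminus X$, and each $\{v_j,w_j\}$ is an edge of $G$. I would first note that no vertex forces twice: if $v_j = v_{j'}$ with $j < j'$, then $N(v_j)\setminus X_{j-1} = \{w_j\}$ gives $N(v_j)\setminus X_j = \emptyset$, hence $N(v_j)\setminus X_{j'-1} = \emptyset$ since $X_j \subseteq X_{j'-1}$, contradicting $N(v_{j'})\setminus X_{j'-1} = \{w_{j'}\}$. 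Thus $j \mapsto v_j$ is injective.

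Next, form the digraph $D$ on vertex set $V(G)$ with arcs $w_j \to v_j$ for $1 \le j \le t$; every arc of $D$ is an edge of $G$. By injectivity of $j \mapsto v_j$ and distinctness of the $w_j$, every vertex of $D$ has in-degree $\le 1$, and its out-degree is $1$ if it lies in $S := \overline{X}\setminus X$ and $0$ otherwise. A digraph with all in- and out-degrees $\le 1$ is a vertex-disjoint union of isolated vertices, simple directed paths, and simple directed cycles. Each directed-cycle component consists entirely of out-degree-$1$ vertices, hence lies inside $S$; each non-trivial directed-path component $p_0 \to p_1 \to \cdots \to p_m$ ends with an out-degree-$0$ vertex $p_m \notin S$, while $p_0,\dots,p_{m-1} \in S$.

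The crux is to exclude directed cycles $u_0 \to u_1 \to \cdots \to u_{\ell-1} \to u_0$ with $\ell \ge 3$ (indices mod $\ell$), via a timing argument. Let $s(i)$ be the step at which $u_i$ is forced (by $u_{i+1}$), so $N(u_{i+1})\setminus X_{s(i)-1} = \{u_i\}$. Since $u_{i+1} \to u_{i+2}$ is an edge of $G$ and $u_{i+2} \ne u_i$ (here $\ell \ge 3$ is used), the vertex $u_{i+2}$ is a neighbour of $u_{i+1}$ distinct from $u_i$, hence is already filled at step $s(i)$: $u_{i+2} \in X_{s(i)-1}$. But $u_{i+2}$ enters the filled set only at step $s(i+2)$, so $s(i+2) < s(i)$. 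Iterating this inequality around the cycle---the index advances by $2 \pmod{\ell}$ and eventually returns to $0$---gives $s(0) < s(0)$, a contradiction. Therefore every cycle component of $D$ has length $2$.

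Finally I would assemble $M$ componentwise. For each $2$-cycle component $u_0 \to u_1 \to u_0$, put the single edge $\{u_0,u_1\}$ into $M$, saturating $u_0,u_1 \in S$. For each path component $p_0 \to \cdots \to p_m$, put the alternating edges $\{p_0,p_1\},\{p_2,p_3\},\dots$ into $M$, finishing with $\{p_{m-1},p_m\}$ when $m$ is odd, so that all of $p_0,\dots,p_{m-1}$---the $S$-vertices of that component, together with the harmless $p_m$ when $m$ is odd---are saturated. The selected edges lie in pairwise vertex-disjoint components of $D$, so their union is a matching of $G$, and every vertex of $S = \overline{X}\setminus X$ lies in some component and is saturated there. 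The one genuinely substantive step is the cycle-exclusion argument of the third paragraph; granted that, the structural decomposition of $D$ and the componentwise matching construction are essentially bookkeeping.
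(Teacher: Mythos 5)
Your proof is correct and follows essentially the same route as the paper's: encode the forcing sequence as a digraph with in- and out-degrees at most one, show every cycle component has length two via a timing contradiction, and then extract the matching from the $2$-cycles and alternating path edges. The only cosmetic differences are the reversed arc orientation and your iterated inequality $s(i+2)<s(i)$ in place of the paper's ``first vertex added to the cycle'' argument, which are interchangeable.
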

\begin{proof} Let $x_1,\ldots,x_m$ be the sequence of vertices added to $X = X_0,X_1,\ldots,X_m=\overline{X}$ as the skew zero forcing rule is applied, where $m = |\overline{X} \setminus X|$.  Note that $x_j$ is added only because there was a vertex $y_j \in V(G)$ whose only neighbor outside $X_j$ is $x_j$.  A vertex $y_j$ never recurs in such a sequence, since, once its single unfilled neighbor is filled, it never has any unfilled neighbors again.  No $x_j$ occurs twice in the sequences of $x_i$'s, since it only gets filled once.  Furthermore, $x_jy_j \in E(G)$ for each $1 \leq j \leq m$, since $y_j$ is a neighbor of $x_j$.  Therefore, the set $D$ of directed edges $\{(y_j,x_j)\}_{j=1}^m$ is an oriented subgraph of $G$ with out-degrees and in-degrees at most one. Some components of $D$ are cycles and others are paths; furthermore, $D$ spans $\overline{X}$.  

We claim that any such cycle has length at most $2$.  Suppose the vertex set of a cycle of length $t$ in $D$ is (in order) $v_0, \ldots, v_{t-1}$.  Note that each $v_i$ is a $y_j$ for some $j$ and an $x_k$ for some $k$. Without loss of generality, $v_1 = x_j$ is the first vertex of $D$ added by SZF rule application.  Then, if $v_{t-1} = x_k$, then $k > j$ and so $x_k = v_{t-1} \not \in X_j$.  But then $v_{t-1}$ and $v_1$ are neighbors of $v_0 = y_j$ not contained in the set $X_j$, contradicting the fact that $v_1 = x_j$ was the only unfilled neighbor of $v_0$ when $x_j$ was added to $X_j$ to obtain $X_{j+1}$ unless $v_1 = v_{t-1}$, i.e., $t=2$.  Thus, $D$ consists of $2$-cycles and paths.  From each such cycle, add the corresponding undirected edge to a set $M$; for each path, add alternating edges to $M$, starting with the sink vertex.  Since the source $y$ of such a path has in-degree zero, it is not $x_j$ for any $j$, so $y \not \in \overline{X}$.  Thus, $M$ is a matching of all vertices of $\overline{X}$ except a subset of $S \subseteq X$.
\end{proof}


The following result is stated with only one direction of proof in the unpublished manuscript \cite{DeA2014}.  An important consequence is that, for trees $T$, minimal sets $X \subseteq V(T)$ so that $\overline{X}=V(T)$ are precisely the sets of vertices unsaturated by some maximum matching.  Note that the proof only invokes Corollary \ref{prop:matching}, which itself does not use any consequences of Theorem \ref{Thm:BigEquiv}.

\begin{cor}\label{Cor:Gammoid}
For any bipartite graph $G$ in which no maximum matching admits an alternating cycle, the SZF matroid is dual to the matching matroid of $G$.  That is, the SZF matroid is a ``gammoid.''
\end{cor}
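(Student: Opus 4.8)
The plan is to show that the bases of the SZF matroid of $G$ are exactly the complements of maximum-matching-saturated vertex sets, which is precisely the statement that the SZF matroid equals the dual of the matching matroid (a transversal matroid, hence the resulting dual is a gammoid). I would first recall that the matching matroid $\cM(G)$ has as its independent sets those $S \subseteq V(G)$ that are saturated by some matching, and its bases are the maximum-size such sets; the bases of the dual $\cM(G)^*$ are thus the complements of the bases of $\cM(G)$, i.e., the sets $V(G) \setminus S$ where $S$ is a set of vertices missed by some maximum matching.

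The first containment I would establish is that every minimal SZF set $X$ (i.e., minimal with $\overline{X} = V(G)$) is the complement of a maximum-matching-saturated set. By Proposition \ref{prop:matching}, $\overline{X} \setminus X$ is saturated by some matching $M$; since $\overline{X} = V(G)$, the complement $V(G) \setminus X$ is contained in the $M$-saturated set, so $V(G) \setminus X$ is saturated by a matching, hence independent in $\cM(G)$. I then need minimality of $X$ (as an SZF set) to force $V(G) \setminus X$ to be a \emph{maximum} independent set of $\cM(G)$. This is where the hypothesis ``no maximum matching admits an alternating cycle'' does its work: if $V(G)\setminus X$ were not maximum, there would be a larger matching-saturated set, and bipartiteness plus an augmenting-path / alternating-path argument (using that no maximum matching has an alternating cycle, so alternating structure is governed entirely by augmenting and alternating \emph{paths}) would let me find a vertex $x \in X$ such that $X \setminus \{x\}$ still SZF-closes to $V(G)$ — contradicting minimality. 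Concretely, I would take a maximum matching $M$ saturating $V(G)\setminus (X\setminus\{x\})$, look at the $M$-alternating path from $x$ (which must terminate, by maximality and acyclicity, at an $M$-unsaturated vertex, or else run into $X\setminus\{x\}$), and run the SZF rule backwards along it just as in the final paragraph of the proof of the $2 \Leftrightarrow 5$ equivalence in Theorem \ref{Thm:BigEquiv}.

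For the reverse containment, I would start from an arbitrary maximum matching $M$ and let $X = V(G) \setminus S$ where $S$ is the set of $M$-unsaturated vertices, and show $\overline{X} = V(G)$ and that $X$ is minimal with this property. To see $\overline{X}=V(G)$: suppose not, so $\overline{X}$ is a proper SZF-closed set containing $X$; then some $M$-unsaturated vertex $u$ lies outside $\overline{X}$. Since $\overline{X}$ is SZF-closed, \emph{every} vertex outside $\overline{X}$ has at least two neighbors outside $\overline{X}$ (or zero); combined with the fact that the complement of $\overline{X}$ is a subset of the $M$-unsaturated set and $M$ is maximum, I build an $M$-alternating walk that, by acyclicity of $M$-alternating cycles, is actually a path, and terminate it to get an augmenting path for $M$ — contradicting maximality. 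Minimality of $X$ then follows because removing any $v$ from $X$ gives a set whose complement is an $M$-saturated set of size $|S|+1 > |S| = $ (matching deficiency), which cannot be independent in $\cM(G)$ since the maximum independent sets of $\cM(G)$ have size exactly $|V(G)| - |S|$; hence $\overline{X\setminus\{v\}} \neq V(G)$. Since SZF-closure is already known to be a matroid closure under the hypothesis (this is the forthcoming claim the subsection is building toward, but at minimum Proposition \ref{prop:matching} plus the two containments above identify the bases), equality of the two matroids follows from equality of their bases.

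I expect the main obstacle to be the minimality direction — specifically, turning ``$V(G)\setminus X$ is not a maximum matching-saturated set'' into ``some single vertex can be removed from $X$.'' The no-alternating-cycle hypothesis is exactly what prevents the pathological case where alternating structure wraps around a cycle and the naive backward-forcing argument stalls; I would need to be careful that every relevant $M$-alternating path either reaches an unsaturated vertex or re-enters $X$, and that in the latter case the forcing still propagates. Bipartiteness is used both to invoke $\eta(G) = |V(G)| - 2\nu(G)$-type rank control (via Proposition \ref{Prop:C4sDulMenNullity} and the Dulmage–Mendelsohn decomposition, which is well-defined by Theorem \ref{Thm:DMdecomp}) and to ensure the matching matroid is transversal so that its dual is genuinely a gammoid.
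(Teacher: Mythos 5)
Your overall target is right --- the bases of the SZF matroid should be exactly the sets of vertices left unsaturated by maximum matchings, i.e., the complements of the bases of the matching matroid --- and your first containment follows the paper's route (Proposition \ref{prop:matching} to get a matching saturating $V(G)\setminus X$, then Berge's lemma to extend it to a maximum matching). But your ``reverse containment'' proves the wrong statement: you set $X = V(G)\setminus S$ with $S$ the $M$-unsaturated vertices, so your $X$ is the \emph{saturated} set, and you then try to show that this $X$ is a minimal skew forcing set. That is false in general: for the path $a$--$b$--$c$ with maximum matching $\{ab\}$, the saturated set $\{a,b\}$ forces but is not minimal ($\{a\}$ already forces everything, and the actual basis is the unsaturated set $\{c\}$). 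Consistently, your minimality argument does not close: you compare $|S|+1$ against the matching-matroid rank $|V(G)|-|S|$, and $|S|+1 \le |V(G)|-|S|$ whenever $2|S|+1\le |V(G)|$, which is the typical situation, so no contradiction arises.

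The substantive half of the corollary --- that the unsaturated set $S$ itself satisfies $\overline{S}=V(G)$ and is minimal --- is exactly what is missing. The paper gets this from DeAlba's argument: under the no-alternating-cycle hypothesis the induced subgraph on the saturated vertices is acyclic, so one repeatedly peels off a degree-one saturated vertex together with its matched partner, and then runs the SZF rule in two passes (forward along the peeling order, then backward along the matched partners) to fill everything. Your sketch never produces this forcing sequence; the walk you describe through a vertex $w$ with two unsaturated neighbors $u,u'$ traverses two consecutive non-matching edges $uw$ and $wu'$, so it is not $M$-alternating and does not yield an augmenting path. (A secondary caution: you invoke the technique from the $2\Leftrightarrow 5$ step of Theorem \ref{Thm:BigEquiv}, but that theorem's proof itself cites Corollary \ref{Cor:Gammoid}, so you must re-derive rather than cite anything from there.)
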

\begin{proof}
    DeAlba (\cite{DeA2014} Proposition 4.1) showed that the unsaturated vertices of every maximum matching are a minimal skew forcing set, i.e., a basis of the SZF matroid.  The proof proceeds as follows: Let $M$ be a maximum matching, of cardinality $r=|M|$, and let $X = S \cup S'$ be the set of its saturated vertices bipartitioned according to the bipartition of $G$. For a vertex $v \in V(M)$, write $v'$ for the vertex to which it is matched. Note that $V(G) \setminus X$ are filled. Since $G[X]$ contains no cycles, there is a vertex $v_1 \in S$ of degree one in this subgraph, so $v_1$ forces $v_1' \in S'$.  Then $G[X \setminus \{v_1,v_1'\}]$ is acyclic, so there is another vertex $v_2 \in S$ which forces $v_2' \in S'$, and so on until $v_{r}$ forces $v_{r}'$.  For each $j \in [r]$, $v_j$ has no edges in $G$ to $v_i'$ if $i>j$.  Now, $v_r$ is the only unfilled neighbor of $v_r'$, so the former is forced by latter; then, $v_{r-1}$ is the only unfilled neighbor of $v_{r-1}'$, so $v_{r-1}$ is forced; and so on, until finally all of $X$, and therefore $V(G)$, is filled.
    
    Conversely, if $B$ is a basis of the SZF matroid, then $\overline{X} = V(G)$, so Proposition \ref{prop:matching} implies that $G$ admits a matching $M$ which saturates $V(G) \setminus X$.  Then $M$ can always be turned into maximum matching $M'$ so that $\bigcup M \subseteq \bigcup M'$ by applying augmenting paths, per Berge's Lemma.
\end{proof}

\subsection{SZF-Completeness for Classes of Graphs}\label{Sec:SZFComplete}

Here we describe some explicit classes of graphs which are SZF-complete.

\begin{prop} \label{prop:onlyfourven}
The cycle $C_n$ is SZF-complete if and only if $4 | n$. 
\end{prop}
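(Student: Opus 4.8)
The plan is to analyze both the SZF-closed sets and the zero loci of nullvectors of $C_n$ directly, exploiting the highly regular structure of the cycle. Label the vertices $0, 1, \ldots, n-1$ cyclically. The first observation is that the adjacency matrix $A(C_n)$ is a circulant, so $\ker(C_n)$ is nontrivial if and only if $-2$ is an eigenvalue... more precisely, the eigenvalues are $2\cos(2\pi k/n)$, which vanish exactly when $n$ is even (giving a two-dimensional nullspace) and we get additional structure when $4 \mid n$. In fact, when $n$ is odd, $C_n$ is nonsingular, so the only realizable set is $V(C_n)$ itself; and $V(C_n)$ is trivially SZF-closed and is the zero locus of the zero vector, so $C_n$ is vacuously SZF-complete for... wait, we must be careful: the empty set is also SZF-closed when no vertex has a unique unfilled neighbor, which for $C_n$ with $n \geq 3$ happens since every vertex has degree $2$. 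So $\emptyset$ is SZF-closed, but it is realizable only if there is a nowhere-zero nullvector. So the odd case already shows non-SZF-completeness, and the $n \equiv 2 \pmod 4$ case needs separate care.

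The key steps, in order: (1) Compute $\ker(C_n)$ explicitly. For $n$ even, nullvectors satisfy $v_{i-1} + v_{i+1} = 0$ for all $i$, so $v_{i+2} = -v_i$; this forces $v_i = (-1)^{\lceil i/2 \rceil}$-type patterns, and consistency around the cycle requires $v_n = v_0$, i.e., $(-1)^{n/2} = 1$, so $4 \mid n$. Thus for $n \equiv 2 \pmod 4$, $\ker(C_n) = 0$, the only realizable set is $V(C_n)$, but $\emptyset$ is SZF-closed, so $C_n$ is not SZF-complete. For $n$ odd, $\ker(C_n) = 0$ as well, same conclusion. (2) For $4 \mid n$, describe $\ker(C_n)$: it is spanned by two vectors, e.g. $\mathbf{a}$ supported on even-indexed vertices with alternating-in-pairs signs and $\mathbf{b}$ supported on odd-indexed vertices similarly. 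Characterize the zero loci: a general nullvector is $s\mathbf{a} + t\mathbf{b}$, and its zero locus is (zeros of $s\mathbf{a}$ on evens) $\cup$ (zeros of $t\mathbf{b}$ on odds). Since $\mathbf{a}$ is nowhere zero on evens when $s \neq 0$, the zero loci among even vertices are just "all evens" (if $s=0$) or "no evens" (if $s \neq 0$), and similarly for odds. Hence the realizable sets are exactly $\emptyset$, the even vertices, the odd vertices, and $V(C_n)$. (3) Independently compute the SZF-closed sets of $C_n$ for $4 \mid n$ and check they coincide with these four sets. A set $S$ is SZF-closed iff no vertex has exactly one neighbor outside $S$, i.e., for every $i$, $|\{i-1,i+1\} \setminus S| \neq 1$. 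This says the complement $V \setminus S$ is a union of vertices such that... each vertex's two neighbors are either both in $S$ or both out. One shows this forces $V \setminus S$ to be a union of "every-other-vertex" classes; working it out, the SZF-closed sets are again exactly $\emptyset$, evens, odds, $V(C_n)$. Comparing the two lists gives SZF-completeness.

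The main obstacle I anticipate is step (3): carefully enumerating the SZF-closed sets of $C_n$ when $4 \mid n$ and confirming there are no "unexpected" ones beyond the four realizable sets. The condition "$S$ is SZF-closed" is a local constraint at each vertex, and propagating it around the cycle requires a small case analysis on the structure of maximal runs of consecutive vertices in $V \setminus S$ — a run of length $1$ is forbidden (its single... actually its endpoints' neighbors), a run of length $\geq 2$ forces its neighbors into certain states, and these interact through the parity $4 \mid n$. An alternative and perhaps cleaner route for step (3) is to invoke Proposition \ref{Prop:matroidContainment}: every realizable set is SZF-closed, giving one inclusion for free; for the reverse, show any SZF-closed proper subset must miss at least two "antipodal-in-parity" vertices and then argue it equals one of the parity classes or $\emptyset$. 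I would also double-check the boundary cases $C_3$ and $C_4$ by hand to make sure the general argument's edge conditions are right.
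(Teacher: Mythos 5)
Your proposal is correct and follows essentially the same route as the paper: for $4\nmid n$ both arguments note that $C_n$ is nonsingular (zero is an eigenvalue $2\cos(2\pi k/n)$ only when $4\mid n$) while $\emptyset$ is SZF-closed, and for $4\mid n$ both enumerate the four SZF-closed sets ($\emptyset$, evens, odds, $V(C_n)$, since SZF-closedness on a cycle is exactly the condition $u\in S\Leftrightarrow u+2\in S$) and realize each by a nullvector, with Proposition \ref{Prop:matroidContainment} giving the reverse inclusion. The only blemishes are the early throwaway remarks (``$-2$ is an eigenvalue,'' ``vanish exactly when $n$ is even''), which are wrong as stated but harmless since your recurrence $v_{i+2}=-v_i$ later establishes the correct criterion.
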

\begin{proof}
For convenience, suppose the vertices of $C_n$ are labeled by $[n]$ in order around the cycle. 

Suppose $4|n$. By Proposition \ref{Prop:matroidContainment}, it suffices to show the SZF-closed sets are realizable. Quick examination shows $C_n$ has four SZF-closed sets, namely $\emptyset$, $V(C_n)$, $\{x\in [n] : 2 |x\}$, and $\{x\in [n] : 2 \nmid x\}$. For $\emptyset$, take the corresponding vector to have $j$ coordinate $(-1)^j$. For $V(C_n)$, the corresponding vector is the trivial $\mathbf{0}$. For the remaining two SZF-closed sets, the corresponding vector places zeros at vertices of the specified set and alternates $1$ and $-1$ at vertices outside the specified set. It is straightforward to verify that these are nullvectors.

Now suppose $4 \nmid n$. It is well known that the eigenvalues of the cycle $C_n$ are $2\cos(2\pi j/n)$ for $0 \leq j \leq n-1$. Thus, $0$ is an eigenvalue of a cycle if and only if its length is divisible by four. Therefore, zero is not an eigenvalue of $C_n$, but $\emptyset$ is an SZF-closed set, completing the proof. 
\end{proof}

\begin{prop}
The complete bipartite graph $K_{m,n}$ is SZF-complete. 
\end{prop}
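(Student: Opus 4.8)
The plan is to determine the SZF-closed sets of $K_{m,n}$ explicitly, compute $\ker(K_{m,n})$, and then check via Proposition \ref{Prop:matroidContainment} that every SZF-closed set is realizable. Write the bipartition as $V(K_{m,n}) = X \sqcup Y$ with $|X| = m$, $|Y| = n$, and assume without loss of generality $m \leq n$. First I would catalog the SZF-closed sets. Let $S \subseteq V(K_{m,n})$ be SZF-closed. If $S$ omits at least two vertices of $X$ and at least two vertices of $Y$, then no forcing can occur, so such an $S$ is closed; conversely, if $S$ omits exactly one vertex $y \in Y$ and $|X \setminus S| \neq \emptyset$, any $x \notin S$ has $y$ as its unique unfilled neighbor and forces, so $S$ is not closed unless it also omits no vertex of $Y$, i.e., $Y \subseteq S$. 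Carrying this case analysis through, the SZF-closed sets are exactly: (i) all $S$ with $|X \setminus S| \geq 2$ and $|Y \setminus S| \geq 2$; (ii) $S \supseteq X$ (arbitrary on $Y$); (iii) $S \supseteq Y$ (arbitrary on $X$); (iv) $S = V$. Note (ii)--(iv) overlap and include the case where one side is entirely filled.

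Next I would compute $\ker(K_{m,n})$. Since $A(K_{m,n})$ in block form is $\begin{pmatrix} 0 & J \\ J^\top & 0 \end{pmatrix}$ with $J$ the all-ones $m \times n$ matrix, a vector $(\mathbf{a}, \mathbf{b})$ with $\mathbf{a} \in \mathbb{R}^X$, $\mathbf{b} \in \mathbb{R}^Y$ is a nullvector iff $\sum_{y} b_y = 0$ and $\sum_{x} a_x = 0$. So $\ker(K_{m,n}) = \{(\mathbf{a},\mathbf{b}) : \mathbf{a} \perp \mathbf{1}_X,\ \mathbf{b} \perp \mathbf{1}_Y\}$, which has dimension $(m-1) + (n-1)$. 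Now I verify realizability of each closed set. For a set $S$ of type (i): on $X$, we need a vector $\mathbf{a}$ summing to zero whose zero set is exactly $S \cap X$; since $|X \setminus S| \geq 2$ there is room to choose nonzero values on $X \setminus S$ summing to $0$ (e.g. values $1, -1, 0, \ldots$ if two free coordinates, or generic nonzero values otherwise), and likewise on $Y$. Concatenating gives a nullvector with zero locus exactly $S$. For type (ii) with $S = X \cup T$, $T \subseteq Y$: take $\mathbf{a} = \mathbf{0}$ (which sums to zero), and on $Y$ pick $\mathbf{b}$ summing to zero with zero set exactly $T$; this is possible as long as $|Y \setminus T| \neq 1$, and when $|Y \setminus T| = 1$ we would need $|Y \setminus T| \geq 2$ — but observe that if $|Y \setminus T| = 1$ then $S$ is not actually SZF-closed (the lone unfilled $y$ gets forced by any $x$, and $X \subseteq S$ so... wait, $X \subseteq S$ means every $x$ is filled and can force). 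So in fact type (ii) closed sets have either $Y \subseteq S$ or $|Y \setminus S| \geq 2$, and realizability goes through in both subcases; symmetric reasoning handles type (iii), and type (iv) is realized by $\mathbf{0}$.

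The main obstacle — really the only subtlety — is getting the boundary cases of the SZF-closed-set classification exactly right: precisely when one side is fully filled, a single unfilled vertex on the other side is forcible, so the "two free coordinates per side" heuristic must be replaced by the correct statement that a closed set restricted to a side is either all of that side or misses at least two vertices of it. Once that dichotomy is pinned down, matching it against the "sum-zero with prescribed support" realizability condition for $\ker(K_{m,n})$ is routine, and Proposition \ref{Prop:matroidContainment} supplies the reverse inclusion (every realizable set is SZF-closed) for free, completing the proof that $K_{m,n}$ is SZF-complete.
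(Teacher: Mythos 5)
Your proposal is correct and follows essentially the same route as the paper: observe that an SZF-closed set must miss either zero or at least two vertices of each side of the bipartition, and then realize it by a vector whose restriction to each side is supported exactly off $S$ and sums to zero. The only wobble is your momentary hesitation about whether a forcing vertex must itself be filled --- in skew zero forcing it need not be, which is precisely why a single unfilled vertex on one side is always forced regardless of the state of the other side --- but you arrive at the correct characterization anyway.
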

\begin{proof}
Let the partition classes of $K_{m,n}$ be $M$ and $N$ so that $|M| = m$ and $|N| = n$. By Proposition \ref{Prop:matroidContainment}, it suffices to show an arbitrary SZF-closed set is realizable. Suppose $X \subseteq V(K_{m,n})$ is an SZF-closed set so that $X \cap M = R$ and $X \cap N = B$. By definition, $|M| - |R| \neq 1$ and $|N| - |B| \neq 1$. If $|M| - |R| > 1$ (resp. $|N| - |B| > 1$), then define $R' := M\setminus R$ (resp. $B' := N\setminus B$). In the corresponding vector, assign $1$ to the coordinate corresponding to one arbitrarily chosen vertex of $R'$ (resp. $B'$), and the remaining vertices of $R'$ (resp. $B'$) with $-1 / (|R'| - 1)$ (resp. $-1 / (|B'| - 1)$). All other coordinates (i.e., the ones corresponding to vertices of $X$) are assigned $0$.  It is easy to see that this is a nullvector realizing the set $X$. 
\end{proof}

Appending a path of length $2$ to an SZF-complete graph yields another SZF-complete graph.

\begin{prop}
Suppose $G$ is SZF-complete and $x\in V(G)$. If $G' = (V(G) \cup \{y,z\}, E(G) \cup \{xy,yz\})$, where $y,z\notin V(G)$, then $G'$ is also SZF-complete.  
\end{prop}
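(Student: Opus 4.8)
The plan is to show that the SZF-closed sets of $G'$ are exactly the extensions of SZF-closed sets of $G$ according to a small finite case analysis on how $\{x,y,z\}$ can be colored, and then to realize each such set by extending a nullvector of $G$. First I would observe that $z$ is a pendant vertex with unique neighbor $y$, and $y$ has neighborhood $\{x,z\}$ in $G'$. So in any SZF-closed set $S \subseteq V(G')$: if $z \notin S$ then (since $z$ is the only neighbor of $y$ that could be unfilled) we must have $y \in S$ or else $y$ forces $z$; arguing more carefully, $z \notin S$ forces $y \in S$ and then $y$'s only possible unfilled neighbor is... wait, $y \in S$, so instead consider $x$: if $x \notin S$ as well, then... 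I would carefully enumerate the possibilities for $S \cap \{x,y,z\}$ that are consistent with $S$ being SZF-closed in $G'$, which I expect to be: (a) $\{x,y,z\} \subseteq S$; (b) $z \in S$, $y \in S$, $x \notin S$; (c) $z \in S$, $y \notin S$, $x \notin S$; (d) none of them in $S$; and to rule out the mixed cases like $z \notin S, y \in S$ (then $y$ has no unfilled neighbor, fine, but then what forces things in $G$?) — the key point is that the presence of the pendant path interacts with whether $x$ can be forced from inside $G$.

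Next I would set up the correspondence. The cleanest formulation: a set $S \subseteq V(G')$ is SZF-closed in $G'$ if and only if $S \cap V(G)$ is SZF-closed in $G$ and $S \cap \{y,z\}$ is "compatible" with whether $x \in S$, in the precise sense dictated by the case analysis above. The subtle interaction is that in $G'$, the vertex $x$ has an extra neighbor $y$, so the condition "$x$ forces some vertex of $G$" changes: $x$ can force $w \in N_G(x)$ only if all of $N_G(x) \setminus \{w\}$ are filled \emph{and} $y$ is filled. I would handle this by noting that $y$ ends up filled in $\overline{S}$ precisely in cases (a),(b),(c), i.e., whenever $z$ gets filled, and that when $y$ is filled the forcing dynamics on $V(G)$ inside $G'$ coincide with the dynamics in $G$; when $y$ is unfilled (case (d), $z \notin \overline S$), then $y$ never becomes fillable (it would need its sole possibly-unfilled neighbor among $\{x,z\}$... but it has two, $x$ and $z$), and moreover $x$ can never force into $G$, and $x$ itself can only be forced by a $G$-vertex if that vertex's sole unfilled neighbor is $x$ — which is exactly the $G$-dynamics with $x$ unfilled. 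This reduces everything to SZF-closed sets of $G$ together with a consistent choice on the appended path.

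Then, for realizability: given an SZF-closed $S \subseteq V(G')$, restrict to $S \cap V(G)$, which is SZF-closed in $G$, hence (by SZF-completeness of $G$) equals $Z(\mathbf{v})$ for some $\mathbf{v} \in \ker(G)$; but I must also control whether $v_x = 0$. Here I would use that the case analysis tells me exactly which of $\{x,y,z\}$ are in $S$, and I need a nullvector of $G$ with $v_x = 0$ iff $x \in S$. In cases where $x \in S$ I want a nullvector of $G$ vanishing on all of $S \cap V(G)$ (which includes $x$) — that exists by SZF-completeness since $S \cap V(G)$ is a realizable set — and I extend by $v_y, v_z$ chosen to satisfy the equations at $x$ (automatically $0 = \sum_{w \sim_G x} v_w + v_y$ forces $v_y = 0$ when that sum is $0$, hmm, need $v_y$ such that the $x$-equation holds: $(A(G')\mathbf v')_x = (A(G)\mathbf v)_x + v_y = v_y$, so $v_y = 0$), then $(A(G')\mathbf v')_y = v_x + v_z = v_z$ forces $v_z = 0$ too — consistent with case (a). In the cases where $x \notin S$, I pick $\mathbf v \in \ker(G)$ realizing $S \cap V(G)$ (so $v_x \neq 0$), set $v_z = -v_x \neq 0$... wait need the $y$-equation $v_x + v_z = 0$, and the $x$-equation $(A(G)\mathbf v)_x + v_y = 0$, i.e. $v_y = 0$ since $\mathbf v$ is a $G$-nullvector — but then $y \in Z(\mathbf v')$, matching case (b); and case (c)/(d) where $y \notin S$ requires $v_y \neq 0$, which means $(A(G)\mathbf v)_x \neq 0$, i.e. $\mathbf v$ need \emph{not} be a nullvector of $G$ but only vanish appropriately — so in those cases I instead take a vector in $\ker(G')$ directly: extend with $v_y$ free and $v_z = -v_x$ and need $v_w$-equations for $w \in V(G)$, $w \neq x$, to hold and the $x$-equation to hold with the extra $v_y$. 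I expect the main obstacle to be exactly this bookkeeping: producing, for each admissible coloring of $\{x,y,z\}$, a vector in $\ker(G')$ with the prescribed zero locus, which will require invoking not just that $S \cap V(G)$ is realizable in $G$ but also a mild statement about realizing sets in $G$ by vectors $\mathbf v$ with $(A(G)\mathbf v)_x$ prescribed to be zero or nonzero — likely packaged via the observation that $S \cap V(G)$ being SZF-closed in $G$ and $G$ SZF-complete gives enough flexibility, perhaps using Proposition~\ref{Prop:RealizableClosedUnderFiniteIntersection} to combine a $G$-nullvector with a well-chosen $\mathbf{e}$-like perturbation. I would close by checking each of the (at most four) cases satisfies $Z(\mathbf v') = S$, completing the proof that $G'$ is SZF-complete.
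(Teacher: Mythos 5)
Your overall strategy matches the paper's (restrict an SZF-closed set of $G'$ to $G$, use SZF-completeness of $G$ to get a nullvector, extend it over $y$ and $z$), but there is a genuine error in the step you build everything on: the enumeration of the possible traces $S \cap \{x,y,z\}$. Under the \emph{skew} zero forcing rule (Definition \ref{Def:SkewZeroForcing}), a vertex forces its unique unfilled neighbor regardless of whether the forcing vertex is itself filled. The pendant vertex $z$ has exactly one neighbor, $y$; so if $y \notin S$ then $z$ has exactly one unfilled neighbor and forces $y$, no matter the status of $z$. Hence \emph{every} SZF-closed set $S \subseteq V(G')$ contains $y$. Your cases (c) ($z \in S$, $y \notin S$) and (d) (none of $x,y,z$ in $S$) are therefore vacuous, and your claim in case (d) that ``$y$ never becomes fillable'' because $y$ has two potentially unfilled neighbors confuses what $y$ can force with what can force $y$ — you are implicitly using the ordinary zero forcing rule, where the forcing vertex must be filled. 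The correct analysis is short: $y \in S$ always, and since $N_{G'}(y) = \{x,z\}$, closedness at $y$ gives $x \in S \Leftrightarrow z \in S$, leaving only two admissible traces, $\{y\}$ and $\{x,y,z\}$.

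This error matters because the ``main obstacle'' you identify — realizing sets with $y \notin S$, which you correctly observe would require $(A(G)\mathbf{v})_x \neq 0$ and hence a vector that is not a nullvector of $G$ — is exactly the part of your argument that does not go through, and it is precisely the part that never arises. Once you know $y \in S$ always, the rest collapses: since $y$ is always filled, the number of unfilled $G'$-neighbors of $x$ equals its number of unfilled $G$-neighbors, so $S \cap V(G)$ is SZF-closed in $G$; SZF-completeness gives $\mathbf{v} \in \ker(G)$ with $Z(\mathbf{v}) = S \cap V(G)$ (which automatically has $v_x = 0$ iff $x \in S$, so no extra control over $v_x$ is needed); and the single extension $v'_y = 0$, $v'_z = -v_x$ satisfies the equations at $x$, $y$, and $z$ and has zero locus exactly $S$ in both admissible cases. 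There is no need for Proposition \ref{Prop:RealizableClosedUnderFiniteIntersection} or any perturbation argument. I recommend redoing the case analysis with the skew rule firmly in mind and then writing out the two-case verification; at that point your proof coincides with the paper's.
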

\begin{proof}
By Proposition \ref{Prop:matroidContainment} it suffices to show an arbitrary SZF-closed set is realizable. 

Suppose $S' \subseteq V(G')$ is an SZF-closed set. Note that $y \in S'$, since otherwise $z$ would have exactly one non-$S'$ neighbor.  Furthermore, $z \in S'$ if and only if $x \in S'$, since otherwise $y$ would have exactly one non-$S'$ neighbor.  Let $S = V(G) \cap S'$.  The number of non-$S$ neighbors of every vertex of $V(G)$, including $x$, is equal to its number of non-$S'$ neighbors in $G'$, so $S$ is SZF-closed in $G$.  Since $G$ is SZF-complete, there is a nullvector $\mathbf{v}$ with zeros given by $S$.  Let $\mathbf{v}'$ be the vector in $\mathbb{R}^{G'}$ given by $\mathbf{v} \oplus (0,-\mathbf{v}_x)$.

Then it is easy to check that the zero locus of $\mathbf{v}'$ is exactly $S'$, and that $\mathbf{v}'$ is a nullvector for $G'$, completing the proof. 
\end{proof}

A graph $G$ \textbf{uniquely perfect matchable (UPM)} if $G$ has exactly one perfect matching. Godsil showed (\cite{God1985}) that bipartite UPM graphs are exactly the perfectly-matchable subgraphs of the half-graph.  We will employ the following additional characterization from Theorem 2 of \cite{WanShaYua2015}:

\begin{lemma} \label{lem:UPM} A bipartite graph $G$ with bipartition $(X, Y )$ is a UPM-graph if and only if
\begin{enumerate}
    \item each of $X$ and $Y$ contains a pendant vertex, and
    \item when the pendant vertices and their neighbors are deleted, the resulting subgraph has a unique perfect matching.
\end{enumerate}
\end{lemma}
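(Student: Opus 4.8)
The plan is to prove both implications by reducing to the subgraph $G'$ obtained from $G$ by deleting the set $P$ of pendant vertices together with their neighbour set $Q = N(P)$. The guiding observation is that, whenever $G$ has a perfect matching, perfect matchings of $G$ are in bijection with perfect matchings of $G'$: each pendant vertex $p$ is forced onto its unique neighbour $q(p)$, and as $p$ ranges over $P$ these neighbours exhaust $Q$, so the edges of any perfect matching lying inside $P\cup Q$ always form the same matching $M_{PQ} = \{p\,q(p) : p \in P\}$, with the remaining edges forming a perfect matching of $G'$.

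For the forward direction, suppose $G$ has a unique perfect matching $M$. To obtain (1), recall that $M$ is the unique perfect matching exactly when $G$ has no $M$-alternating cycle. Form the digraph $D$ on vertex set $X$ with an arc $x\to x'$ (for $x\neq x'$) whenever $x$ is adjacent to the $Y$-vertex that $M$ matches to $x'$; a directed cycle of $D$ is precisely an $M$-alternating cycle, so $D$ is acyclic and hence has a sink. A sink is a vertex of $X$ whose only neighbour is its own $M$-partner, i.e.\ a pendant vertex of $X$, and the mirror-image argument on $Y$ produces a pendant vertex there. For (2), note that since $M$ exists no two pendant vertices can share a neighbour, so $M$ restricted to $P\cup Q$ is a perfect matching $M_{PQ}$ of $G[P\cup Q]$ and $M' := M\setminus M_{PQ}$ is a perfect matching of $G'$; any second perfect matching $N'$ of $G'$ would make $M_{PQ}\cup N'$ a second perfect matching of $G$, so $G'$ is UPM.

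For the reverse direction, assume (1) and (2). One must also know that $G$ has a perfect matching — this is the natural standing hypothesis, and it is genuinely needed, since a ``double star'' (two adjacent centres, each carrying two further leaves) satisfies (1) and (2) yet has no perfect matching. Granting it, restricting a perfect matching of $G$ shows $G[P\cup Q]$ has a perfect matching, which by the forcing remark must equal $M_{PQ}$ and is therefore unique; combining $M_{PQ}$ with the unique perfect matching $M'$ of $G'$ supplied by (2) gives the perfect matching $M_{PQ}\cup M'$ of $G$, and the forcing remark shows that every perfect matching of $G$ is $M_{PQ}$ together with a perfect matching of $G'$, hence equals $M_{PQ}\cup M'$. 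Thus $G$ is UPM.

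The step I expect to be the main obstacle is the bookkeeping around degenerate configurations in this reduction: $K_2$-components, where $P$ meets $Q$; the possibility of several pendant vertices of one class sharing a neighbour (which cannot happen once a perfect matching is present, but is exactly what breaks the naive reverse implication otherwise); and verifying that the pendant vertex of $X$ and the pendant vertex of $Y$ are distinct and non-adjacent unless they form a $K_2$-component. As an alternative to this self-contained route, the lemma follows quickly from Godsil's description (\cite{God1985}) of bipartite UPM graphs as the perfectly-matchable subgraphs of the half-graph: there a pendant vertex appears in each class, and deleting the pendant vertices together with their neighbours returns a perfectly-matchable subgraph of a smaller half-graph.
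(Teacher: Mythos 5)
Your proof is correct, but there is nothing in the paper to compare it to: Lemma \ref{lem:UPM} is imported verbatim from Theorem 2 of \cite{WanShaYua2015} and the paper supplies no argument for it, so your self-contained proof is genuinely additional content. Your forward direction (acyclicity of the auxiliary digraph on $X$ forces a sink, which is a pendant vertex; the forced matching $M_{PQ}$ on $P\cup Q$ then reduces uniqueness to $G'$) is sound, and your handling of the degenerate configurations you list ($K_2$-components where $P$ meets $Q$, adjacent vertices of $Q$) goes through without trouble. Your most valuable observation is that the ``if'' direction, as transcribed in the paper, is false without the standing hypothesis that $G$ has a perfect matching: your double-star example does satisfy (1) and (2) (the deletion empties the graph, and the empty graph must be granted a unique perfect matching for the lemma to hold even for $K_2$) yet has no perfect matching at all, precisely because two pendant vertices share a neighbour. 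That hypothesis is surely present, implicitly or explicitly, in the cited source and was dropped in the restatement. This does not affect the paper, which invokes the lemma only for graphs already assumed to be UPM (to extract a pendant vertex $v$ with neighbour $w$ such that $G-\{v,w\}$ is again UPM), but it would be worth flagging to the authors that the lemma as stated needs ``with a perfect matching'' added to its hypothesis.
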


The following result adds a property to the TFAE statement Theorem 2.12 of \cite{AnsJacPenSaa2016}, and characterizes SZF-completeness for nonsingular bipartite graphs.

\begin{theorem}
A nonsingular bipartite graph $G$ is SZF-complete if and only if it is UPM. 
\end{theorem}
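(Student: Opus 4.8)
The plan is to prove both directions using the structural characterization of UPM bipartite graphs from Lemma~\ref{lem:UPM} (peeling pendant vertices) together with Proposition~\ref{Prop:matroidContainment} (realizable $\Rightarrow$ SZF-closed), which reduces SZF-completeness to showing that every SZF-closed set is realizable. Throughout, ``nonsingular'' means $\emptyset$ is realizable (the all-zeros vector is not a nullvector, but the nullspace is trivial, so $\emptyset = Z(\mathbf{v})$ fails—wait, one must be careful: for a nonsingular graph $\ker(G) = \{\mathbf{0}\}$, so the only realizable set is $V(G)$ itself, since $Z(\mathbf{0}) = V(G)$). Hence SZF-completeness of a nonsingular $G$ is equivalent to the assertion that $V(G)$ is the \emph{only} SZF-closed set, i.e., $\overline{\emptyset} = V(G)$, i.e., the empty set skew-forces the whole graph.

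\textbf{($\Leftarrow$) UPM implies SZF-complete.} Suppose $G$ is bipartite and UPM; I would show $\overline{\emptyset} = V(G)$ by induction on $|V(G)|$ using Lemma~\ref{lem:UPM}. The base case is a single edge $K_2$: starting from $\emptyset$, each endpoint has exactly one unfilled neighbor, so one forces the other, and then that vertex forces back—actually one force suffices to fill one vertex, and then the remaining vertex is forced by its now-sole-unfilled-neighbor criterion; in any case $\overline{\emptyset} = V(K_2)$. For the inductive step, Lemma~\ref{lem:UPM} gives a pendant vertex $p \in X$ with neighbor $q \in Y$, and a pendant vertex in $Y$ as well. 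Deleting $p$ and $q$ (and $p', q'$ in the other class, as in the lemma) leaves a UPM graph $G''$ on fewer vertices, which by induction satisfies $\overline{\emptyset} = V(G'')$. I would then argue that in $G$, starting from $\emptyset$: the pendant $p$ has the single neighbor $q$, so... no—$p$ itself is not filled, so $p$ cannot force. Instead, note $q$ might have many neighbors. The right move is to run the forcing on $G''$ first (this is valid since filling vertices of $G''$ only helps in $G$, as $G''$ is an induced subgraph and no vertex of $V(G) \setminus V(G'')$ being unfilled can block a force that was valid in $G''$—need to check a vertex $v \in V(G'')$ doesn't have extra unfilled neighbors outside $G''$; this is exactly the content one must verify, using that $p, q$ and the other deleted pair are pendant/neighbors-of-pendant so their adjacencies into $G''$ are controlled). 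Once $V(G'')$ is filled, the neighbor $q$ of the pendant $p$ now has only $p$ unfilled (all its $G''$-neighbors are filled), so $q$ forces $p$; symmetrically the other pendant gets filled, and any leftover vertices (the $p', q'$) are handled the same way. Hence $\overline{\emptyset} = V(G)$.

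\textbf{($\Rightarrow$) SZF-complete (nonsingular bipartite) implies UPM.} For the contrapositive, suppose $G$ is nonsingular bipartite but \emph{not} UPM. Nonsingular bipartite graphs have a perfect matching (else a zero eigenvalue by a rank/matching count — indeed $\rank \le 2\nu < |V|$), but since $G$ is not UPM it has at least two distinct perfect matchings $M_1 \neq M_2$; their symmetric difference contains an $M_1$-alternating (equivalently $M_2$-alternating) cycle $C$. The idea is to produce a nontrivial SZF-closed set $S \neq V(G)$, contradicting SZF-completeness (given nonsingularity). The natural candidate: I would show that the set $S = V(G) \setminus V(C)$ — or more robustly, that starting from $\emptyset$ the forcing \emph{stalls} before filling everything — because the alternating cycle $C$ is an obstruction to forcing: along a cycle, every vertex has two potential "directions" of propagation and the SZF rule (which requires a \emph{unique} unfilled neighbor) can never get started on an even cycle from the empty set. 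Concretely, I would prove: if $G$ has a maximum matching admitting an alternating cycle then $\overline{\emptyset} \ne V(G)$. One clean route: by Corollary~\ref{Cor:Gammoid}'s proof technique in reverse, or directly: consider $V(C)$; I claim $V(G) \setminus V(C)$... hmm, outside vertices may have unfilled neighbors on $C$. Better: build an explicit nullvector-like stalled set. Since this is the crux, let me state it as the main obstacle below and sketch the intended argument: alternate $+1/-1$ around $C$ to get a vector $\mathbf{w}$ supported on $V(C)$ with $(A\mathbf{w})_v = 0$ for $v \in V(C)$ (using that $C$ is a cycle, consecutive $\pm 1$ cancel) and for $v \notin V(C)$, $(A\mathbf{w})_v = $ sum of $\pm 1$ over neighbors of $v$ on $C$; this need not vanish, so $\mathbf{w}$ is generally \emph{not} a nullvector — which is consistent with $G$ nonsingular. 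So instead I would argue combinatorially that $Z := \overline{\emptyset}$ must be a proper subset: suppose $Z = V(G)$; by Proposition~\ref{prop:matching} the forced vertices $\overline{\emptyset} \setminus \emptyset = V(G)$ are all saturated by some matching $M$, so $M$ is perfect; retracing the forcing sequence as in the proof of Proposition~\ref{prop:matching} shows the oriented digraph $D$ of forces decomposes into $2$-cycles and paths, and a perfect matching forces $D$ to consist only of $2$-cycles (no path sources, since a source is unfilled hence not in $\overline{\emptyset}$, impossible if $\overline{\emptyset} = V(G)$). But a digraph of $2$-cycles covering $V(G)$ with the forcing-validity constraints pins down a \emph{unique} perfect matching: at the moment $x_j$ is forced by $y_j$, $x_j$ is the only unfilled neighbor of $y_j$, and one shows inductively (ordering the $2$-cycles by force-time) that this edge $x_jy_j$ must lie in \emph{every} perfect matching, forcing $M_1 = M_2$; contradiction with not-UPM. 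This last inductive uniqueness argument is the heart of the matter and the step I expect to require the most care — it is essentially showing that a "skew forcing from empty" certificate for a perfect matching doubles as a uniqueness certificate, which is where the bipartiteness and the absence of alternating cycles (equivalently UPM) enter. Assembling the two directions completes the proof.
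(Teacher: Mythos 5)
Your opening reduction is correct and is a genuinely different organizing principle from the paper's: for nonsingular $G$ the only realizable set is $V(G)$ (since $\ker(G)=\{\mathbf{0}\}$ and $Z(\mathbf{0})=V(G)$), so by monotonicity of the closure, SZF-completeness is equivalent to $\overline{\emptyset}=V(G)$. Your $(\Rightarrow)$ direction then amounts to proving that $\overline{\emptyset}=V(G)$ forces a unique perfect matching, and it works: in the force-digraph $D$ of Proposition \ref{prop:matching} every vertex has in-degree one (everything gets forced), so $D$ consists solely of $2$-cycles and yields a perfect matching $M$; and the time-ordered induction you flag as delicate does close, because at the moment $w$ forces $z$ all neighbors of $w$ other than $z$ are already filled and, by the induction hypothesis, already matched to their own forcers, so $w$'s partner in any perfect matching must be $z$. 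Combined with the (correct) observation that a nonsingular bipartite graph has a perfect matching, this gives the contrapositive cleanly, and the uniqueness step does not even use bipartiteness. This is a genuinely different route from the paper, which in both directions inducts on deleting a pendant vertex $v$ and its neighbor $w$, using $|\det(G-\{v,w\})|=|\det(G)|$ and Lemma \ref{lem:UPM} to transfer nullvectors and SZF-closed sets between $G$ and $G-\{v,w\}$.

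The $(\Leftarrow)$ direction, however, contains a genuine error. You assert that the pendant vertex $p$ ``cannot force'' its neighbor $q$ because $p$ is not filled --- but that is the \emph{ordinary} zero forcing rule, not the skew one. Under Definition \ref{Def:SkewZeroForcing} an unfilled vertex with exactly one unfilled neighbor does force, so starting from $\emptyset$ every pendant $p$ immediately forces its unique neighbor $q$. Having ruled this move out, you fall back on ``run the forcing on $G''$ first,'' and the verification you defer (that a vertex of $G''$ has no extra unfilled neighbors outside $G''$) actually fails in that order: a vertex of $G''$ adjacent to a still-unfilled deleted vertex $q$ (a neighbor of a pendant) has at least two unfilled neighbors in $G$ and cannot perform a force that was legal in $G''$. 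The repair is precisely the move you dismissed: first let each pendant force its neighbor (in a graph with a perfect matching no vertex is adjacent to two pendants, so this is consistent), after which every vertex of $G''$ has the same set of unfilled neighbors in $G$ as in $G''$, the induction applies, and finally each $q$, now having $p$ as its sole unfilled neighbor, forces $p$ while $p$ forces $q$. With that correction your induction goes through, but as written the inductive step would fail.
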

\begin{proof}
$(\Rightarrow)$: We proceed by strong induction on $|V(G)|$.  It is easy to check the statement holds for 1 or 2 vertices.  Suppose $G$ is bipartite, SZF-complete, and nonsingular on at least 3 vertices. Since $G$ is nonsingular, $V(G)$ is the only zero locus of a nullvector of $G$. Moreover, since $G$ is SZF-complete, $\emptyset$ is not SZF-closed. So, there must be a vertex $v$ of degree one, with neighbor $w$.  Then (see \cite{AkbKir2007}, Lemma 1), $|\det(G - \{v,w\})| = |\det(G)|$, so $G'=G-\{v,w\}$ is nonsingular.  Take a set $S'$ which is SZF-closed for $G'$.  If $w$ has at least one neighbor in $V(G')-S'$, let $S = S' \cup \{w\}$.  Then $S$ is SZF-closed for $G$, so there is a nullvector $\mathbf{x}$ whose zero coordinates are exactly $S$.  Restrict $\mathbf{x}$ to $G'$ to obtain $\mathbf{x}'$, which is a nullvector for $G'$ corresponding to $S'$.  If $w$ has no $S'$-neighbors, let $S = S' \cup \{v,w\}$.  Then $S$ is SZF-closed for $G$, so there is a nullvector $\mathbf{x}$ whose zero coordinates are exactly $S$. Restrict $\mathbf{x}$ to $G'$ to obtain $\mathbf{x}'$, which is a nullvector for $G'$ corresponding to $S'$.  Therefore, each SZF-closed set for $G'$ corresponds to a nullvector for $G'$, and $G'$ is SZF-complete as well.  Now, since $G$ is nonsingular, the permanent of the adjacency matrix of $G$ is nonzero; since this permanent is the square of the number of perfect matchings in $G$ (consider its adjacency matrix written in block form), $G$ must have a perfect matching.  Every such perfect matching must include the edge $vw$.  Now $G'$ is bipartite, nonsingular, and SZF-complete, so $G'$ also has a unique perfect matching $M$ by the induction hypothesis, and $M \cup \{vw\}$ is the unique perfect matching of $G$.

$(\Leftarrow)$: Again, $G$ is UPM implies that $|\det(G)|$ is 1, so $G$ is nonsingular.  We show SZF-completeness by induction.  The base case is easy.  By Lemma \ref{lem:UPM}, $G$ has a pendant vertex $v$ with neighbor $w$ so that $G' = G-\{v,w\}$ is also bipartite UPM and therefore nonsingular.  Let $S$ be an SZF-closed set of vertices for $G$.  Then $S$ contains $w$, or else $v$ would have exactly one non-$S$ neighbor.  Let $S'=S-\{v,w\}$ in $G'$.  Then $S'$ is SZF-closed in $G$ because the number of non-$S'$ neighbors of vertices $u$ in $V(G')$ is the same as the number of non-$S$ neighbors of $u$ in $G$.  Thus, by the induction hypothesis, there is a nullvector $\mathbf{x'}$ for $G'$ whose zero coordinates are exactly $S'$.  Extend $\mathbf{x'}$ to a vector $\mathbf{x}$ by setting the $w$ coordinate to 0 and the $v$ coordinate equal to the negative of the sum $C$ of the $\mathbf{x'}$ coordinates arising from neighbors of $w$.  Then $\mathbf{x}$ is a nullvector for $G$, and its set of zeros is exactly $S$ unless $S$ does not contain $v$, but the $v$-coordinate $-C$ of $\mathbf{x}$ is zero.  Then the nullspace of $G$ has a nontrivial intersection with the space where the $v$ and $w$ coordinates are equal, contradicting the nonsingularity of $G$, unless $S'$ is all of $V(G')$.  But then $S=V(G)$, and the zero vector corresponds to $S$.  So $G$ is SZF-complete.
\end{proof}

Subdividing an edge of an SZF-complete graph into a path of length $5$ yields another SZF-complete graph.

\begin{prop}
Let $G$ be an SZF-complete graph, and $xy \in E(G)$. If $G' = (V(G) \cup \{x_i\}_{i=1}^4, [E(G) \setminus \{xy\}] \cup \{xx_1,x_1x_2,x_2x_3,x_3x_4,x_4y\})$, then $G'$ is SZF-complete.
\end{prop}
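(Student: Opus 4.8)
By Proposition \ref{Prop:matroidContainment} it suffices to show that every SZF-closed set $S' \subseteq V(G')$ is realizable, i.e., that $S' = Z(\mathbf{v}')$ for some $\mathbf{v}' \in \ker(G')$. The plan is to first determine exactly how $S'$ can meet the new path $x,x_1,x_2,x_3,x_4,y$, then show that $S := S' \cap V(G)$ is SZF-closed in $G$, and finally use SZF-completeness of $G$ to produce a nullvector of $G$ which extends uniquely across the path to a nullvector of $G'$ realizing $S'$.

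For the first step I would use that each internal path vertex $x_i$ has exactly two neighbors in $G'$, so SZF-closedness forces the number of its non-$S'$ neighbors to be $0$ or $2$; equivalently, the two neighbors of $x_i$ lie in $S'$ together or lie outside $S'$ together. Applying this successively at $x_1, x_2, x_3, x_4$ yields $x \in S' \Leftrightarrow x_2 \in S' \Leftrightarrow x_4 \in S'$ and $x_1 \in S' \Leftrightarrow x_3 \in S' \Leftrightarrow y \in S'$. In particular the restriction of $S'$ to the path is completely determined by whether $x$ and $y$ lie in $S'$.

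Next, set $S = S' \cap V(G)$. For $v \in V(G) \setminus \{x,y\}$ the neighborhoods of $v$ in $G$ and $G'$ coincide, so $v$ has the same number of non-$S$ neighbors in $G$ as non-$S'$ neighbors in $G'$, which is $\neq 1$. At $x$, passing from $G'$ to $G$ replaces the $G'$-neighbor $x_1$ by the $G$-neighbor $y$; since $x_1 \in S' \Leftrightarrow y \in S'$, this does not change the count of non-closed-set neighbors, so $x$ has $\neq 1$ non-$S$ neighbors in $G$. Symmetrically at $y$, using $x_4 \in S' \Leftrightarrow x \in S'$. Hence $S$ is SZF-closed in $G$, and since $G$ is SZF-complete there is $\mathbf{v} \in \ker(G)$ with $Z(\mathbf{v}) = S$. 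Now extend $\mathbf{v}$ to a vector $\mathbf{v}'$ on $V(G')$ by leaving it unchanged on $V(G)$ and setting $v'_{x_1} = v_y$, $v'_{x_2} = -v_x$, $v'_{x_3} = -v_y$, $v'_{x_4} = v_x$. Verifying $A(G')\mathbf{v}' = \mathbf{0}$ is routine: at $v \in V(G)\setminus\{x,y\}$ it is the equation for $G$; at $x$ it reads $\big(\textstyle\sum_{w \in N_G(x)} v_w\big) - v_y + v'_{x_1} = 0$, which holds by the choice of $v'_{x_1}$, and symmetrically at $y$; and at $x_1,\dots,x_4$ the equations collapse to $v_x - v_x = 0$ or $v_y - v_y = 0$ after substitution. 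Finally $Z(\mathbf{v}') \cap V(G) = Z(\mathbf{v}) = S = S' \cap V(G)$, while on the path $v'_{x_1} = 0 \Leftrightarrow v_y = 0 \Leftrightarrow y \in S' \Leftrightarrow x_1 \in S'$, and likewise $v'_{x_2} = 0 \Leftrightarrow x \in S' \Leftrightarrow x_2 \in S'$, $v'_{x_3} = 0 \Leftrightarrow y \in S' \Leftrightarrow x_3 \in S'$, $v'_{x_4} = 0 \Leftrightarrow x \in S' \Leftrightarrow x_4 \in S'$, using the pattern equivalences above. Therefore $Z(\mathbf{v}') = S'$, so $S'$ is realizable and $G'$ is SZF-complete.

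The point requiring insight rather than bookkeeping is recognizing why a subdivision into a path of length exactly $5$ works: the four internal vertices produce a chain of four equivalences that rigidly and alternately couple $x$ to $y$ (needed to transport SZF-closedness back to $G$), and the forced extension values $v_y, -v_x, -v_y, v_x$ happen to satisfy the interior path equations $v'_{x_2}+v'_{x_4}=0$ and $v'_{x_3}+v'_y=0$ with no further hypothesis on $\mathbf{v}$. For a subdivision of a different length, either the coupling between $x$ and $y$ or the consistency of the extension would fail (one would be forced into an extra relation such as $v_x = v_y$); so the main content is checking that length $5$ is the sweet spot, and everything else is routine.
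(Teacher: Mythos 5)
Your proof is correct and follows essentially the same route as the paper's: you derive the same parity-coupling equivalences along the path ($x \Leftrightarrow x_2 \Leftrightarrow x_4$ and $y \Leftrightarrow x_3 \Leftrightarrow x_1$), use them to show $S' \cap V(G)$ is SZF-closed in $G$, and extend a realizing nullvector with exactly the same values $v_y, -v_x, -v_y, v_x$ on $x_1,\dots,x_4$ before invoking Proposition \ref{Prop:matroidContainment}. The only difference is that you spell out the verification that the extension is a nullvector with the right zero locus, which the paper leaves as "easy to check."
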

\begin{proof}
Let $X'\subseteq V(G')$ be SZF-closed, and let $X = X'\cap 
V(G)$. Notice that $x \in X'$ if and only if $x_2,x_4 \in X'$. Moreover, $y \in X'$ if and only if $x_1,x_3 \in X'$. We show $X$ is SZF-closed in $G$. 

Clearly, $N_G(v) = N_{G'}(v)$ for each $v \in V(G) \setminus \{x,y\}$. Additionally, $N_G(x) = (N_{G'}(x) \setminus \{x_1\})\cup \{y\}$, and $N_G(y) = (N_{G'}(y) \setminus \{x_4\})\cup \{x\}$. Since $x \in X'$ if and only if $x_4 \in X'$, and $y \in X'$ if and only if $x_1 \in X'$, we have that $X$ is SZF-closed in $G$. 

Since $G$ is SZF-complete, there exists $\mathbf{z} \in \ker(G)$ so that $X$ is the zero locus of $\mathbf{z}$. Extend $\mathbf{z} \in \mathbb{R}^{V(G)}$ to vector $\mathbf{z}' \in \mathbb{R}^{V(G')}$ so that $z_v = z'_v$ for each $v \in V(G)$, $z'_{x_2} := -z_x$, $z'_{x_4} := z_x$, $z'_{x_1} := z_y$, and $z'_{x_3} := -z_y$. Clearly, $\mathbf{z}' \in \ker(G')$, so $X'$ is realizable, and Proposition \ref{Prop:matroidContainment} completes the proof. 
\end{proof}

Note that SZF-completeness is {\it not} required for a graph's SZF-closed set system to arise from a matroid.  For example, the SZF-closed sets of $C_6$ with vertex set $[6]$ are $\emptyset$, $\{1,3,5\}$, $\{2,4,6\}$, and $[6]$, which form a $2$-dimensional Boolean lattice, but $C_6$ is not SZF-complete by Proposition \ref{prop:onlyfourven}.

\section{Hypergraphs} \label{Sec:Hypergraphs}

The present work began with an investigation into adjacency nullvectors of hypergraphs, so we return to that topic here. As is often the case, hypergraphs add significant additional complexities to the situation for ordinary graphs.  At least for linear hypertrees -- connected hypergraphs with no nontrivial cycles and for which pairs of edges intersect in at most one vertex -- we extend some of Theorem \ref{Thm:BigEquiv} in the first subsection below.  This involves a new definition of SZF-closed sets for hypergraphs, which we show gives rise to a set system in containment-preserving bijection with the lattice of subvarieties of linear hypertrees' nullvarieties (terminology defined below).  The following section then describes the kernel-closed and SZF-closed sets of complete hypergraphs.

Spectral hypergraph theory is a large and growing area, so it is not possible to offer a thorough introduction here.  We provide only key definitions.  For a starting point on hypergraph spectra, we refer the reader to \cite{CoDu12}; for more on eigenvarieties, see \cite{FanBaoHua19}; for a broader view from the theory of tensors, see \cite{QiLu17}.  A {\it hypergraph} $\mathcal{H}$ is a pair $(V(\mathcal{H}), E(\mathcal{H}))$ of vertices and edges, with $E(\mathcal{H}) \subseteq \mathscr{P}(V(\mathcal{H}))$, where we assume that $|e| > 1$ for each $e \in E(\mathcal{H})$; we denote the number of vertices of $\mathcal{H}$ by $n = n_\mathcal{H}$ and typically identify $V(\mathcal{H})$ with $[n]$.  The {\it rank} of an edge $e \in E(\mathcal{H})$ is its cardinality, and $\mathcal{H}$ is said to be $k$-uniform, or a $k$-graph, if all edges have rank $k$. The {\it adjacency hypermatrix} $\mathcal{A}_\mathcal{H}$ of a $k$-uniform hypergraph $\mathcal{H}$ on $n$ vertices is a dimension-$n$, order-$k$ hypermatrix (often identified with the tensor of which it is the coordinate matrix), i.e., an element of $\mathbb{C}^{[n]^k}$, whose $(i_1,\ldots,i_k)$ entry $\mathcal{A}_\mathcal{H}(i_1,\ldots,i_k)$ is $1/(k-1)!$ if $\{i_1,\ldots,i_k\}$ is an edge of $\cH$ and zero otherwise.  The factor of $1/(k-1)!$ is sometimes omitted in this definition -- for present purposes, it is immaterial.  The value $\lambda \in \mathbb{C}$ is an eigenvalue with eigenvector $\mathbf{x} = (x_1,\ldots,x_n) \in \mathbb{C}^n$ if 
$$
\sum_{i_2,\ldots,i_k} \mathcal{A}_\mathcal{H}(i,i_2,\ldots,i_k) x_{i_2} \cdots x_{i_k} = \lambda x_i^{k-1} 
$$
for each $i \in [n]$.  The $k$-form $p(x_1,\ldots,x_k)=\sum_{i_1,i_2,\ldots,i_k} \mathcal{A}_\mathcal{H}(i_1,\ldots,i_k) x_{i_1} \cdots x_{i_k}$ whose gradient appears on the left-hand side above is sometimes known as the \textit{Lagrangian polynomial} of $\cH$.  It is straightforward to show (see \cite{Qi05} proof of Proposition 1) that the coordinate $\partial p/\partial x_i$ is $k$ times the Lagrangian polynomial of the \textit{link} of vertex $v_i$ in $\cH$, i.e., the hypergraph whose edges are $\{e \setminus \{v_i\} \,|\, v_i \in e \in E(\cH)\}$. So, for $k$-uniform hypergraphs, it is equivalent to define an eigenvalue $\lambda$ with eigenvector $\mathbf{x}$ as a simultaneous solution to 
\begin{equation} \label{eq:eq1}
\sum_{\substack{e\in E(\mathcal{H})\\ v\in e }} \prod_{\substack{w\in e \\ w \neq v}} x_w = \lambda x_v^{k-1}
\end{equation}
for all $v \in V(\mathcal{H})$. For ease of notation, we use $f_{\mathcal{H}, v}$ to denote the polynomial on the left-hand side of (\ref{eq:eq1}), the Lagrangian polynomial of the link of $v$ in $\mathcal{H}$. Even for mixed rank hypergraphs, i.e., non-uniform hypergraphs, we adopt this definition for eigenpairs $(\lambda,\mathbf{x})$. Notice that for graphs, i.e., when $k = 2$, this agrees with standard adjacency spectra in graph theory. 

Throughout this section, we are interested in the eigenvalue zero, in the same way as for graphs previously. For hypergraph $\mathcal{H}$, the collection of all eigenvectors associated to the eigenvalue zero, called \textit{nullvectors}, form an affine variety, called the \textit{nullvariety}. The authors of \cite{CooFic2021} use $\mathcal{V}_0(\mathcal{H})$ to denote this collection, but since these vectors comprise the kernel of $\mathcal{A}_\mathcal{H}$ as a $(k-1)$-form (at least for $k$-uniform hypergraphs), we denote the collection of nullvectors by $\ker(\mathcal{H})$ here to emphasize its relationship to graphs' nullspaces.

Given the collection of links' polynomials, $f_{\mathcal{H}, v}$, we require a notation for evaluating some variables at zero. Write  $\mathbf{x}_U = \{x_v\}_{v \in U}$ for any set $U \subseteq V(\mathcal{H})$ and $\langle \mathcal{F} \rangle$ for the polynomial ideal in $\mathbb{C}[\mathbf{x}] = \mathbb{C}[\{x_v\}_{v \in V(\mathcal{H})}]$ generated by a collection of polynomials $\mathcal{F}$ over $\{x_v\}_{v \in V(\mathcal{H})}$.  Furthermore, let $\phi_{U} : \mathbb{C}[\mathbf{x}] \rightarrow \mathbb{C}[\mathbf{x}]$ be the evaluation homomorphism obtained by extending the maps $x_v \mapsto 0$ if $v \in U$ and $x_v \mapsto x_v$ otherwise. Lastly, for $U \subseteq V(\mathcal{H})$, define $\mathcal{V}^{\mathcal{H}}(U)$ to be the affine variety defined by the ideal $\langle \mathbf{x}_U \cup \{ f_{\mathcal{H}, v} : v \in V(\mathcal{H})\}\rangle = \langle \mathbf{x}_U \rangle + \langle \phi_{U} f_{\mathcal{H}, v} : v \in V(\mathcal{H})\}\rangle $, which captures what is left of $\langle \{f_{\mathcal{H}, v} : v \in V(\mathcal{H})\} \rangle$ after variables indexed by elements of $U$ are set to zero. Notice that for any $U\subseteq V(\mathcal{H})$, $\mathcal{V}^{\mathcal{H}}(U)$ is a subvariety of $\ker(\mathcal{H})$, and $\ker(\mathcal{H}) = \mathcal{V}^{\mathcal{H}}(\emptyset)$ in particular.

The collection of graph nullvectors form a vector space; analogously, the collection of hypergraph nullvectors forms an algebraic variety. While there is a unique generating set for $\ker(T)$ when $T$ is a tree, for hypertrees $\mathcal{T}$, $\ker(\mathcal{T})$ breaks into many irreducible components, each having its own generating set.  We therefore examine the generating sets of irreducible components of $\ker(\mathcal{T})$.

\subsection{Linear Hypertrees} 

A hypergraph $\mathcal{H}$ is {\it linear} if every pair of edges intersect in at most one vertex.  A {\it cycle} in $\mathcal{H}$ is a sequence $x_0,e_1,x_1,\ldots,x_{t-1},e_t,x_t$ of alternating vertices $x_j \in V(\mathcal{H})$ and edges $e_j \in E(\mathcal{H})$ so that the $x_j$ are distinct except that $x_0 = x_t$, and the $e_j$ are distinct.  A hypergraph $\mathcal{H}$ is a {\it hypertree} if it admits no cycles.  A {\it pendant vertex} is a vertex of degree one, and a {\it leaf edge} is an edge containing at most one non-pendant vertex.

\begin{prop}\label{Prop:vertex cover}
Let $\mathcal{T}$ be a linear hypertree on $n$ vertices so that every edge has rank at least two. Let $\mathbf{x} \in \mathbb{C}^n$ be a nullvector of $\mathcal{T}$. Then for each edge $e\in E(\mathcal{T})$, there exists vertex $v\in e$ so that $x_v = 0$. 
\end{prop}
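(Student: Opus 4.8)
The plan is to argue by contradiction, using only the defining equations together with linearity and acyclicity. Suppose there is an edge $e^{*} \in E(\mathcal{T})$ on which $\mathbf{x}$ vanishes nowhere, i.e., $x_v \neq 0$ for all $v \in e^{*}$. Let $N = \{v \in V(\mathcal{T}) : x_v \neq 0\}$ be the support of $\mathbf{x}$, put $\mathcal{E} = \{e \in E(\mathcal{T}) : e \subseteq N\}$, and form the sub-hypergraph $\mathcal{K} = \bigl(\bigcup_{e \in \mathcal{E}} e,\ \mathcal{E}\bigr)$. Since $e^{*} \subseteq N$, we have $e^{*} \in \mathcal{E}$, so $\mathcal{K}$ has at least one edge.

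The crux is to show that every vertex of $\mathcal{K}$ has degree at least $2$ in $\mathcal{K}$. Fix $v \in V(\mathcal{K})$, say $v \in e_{1} \in \mathcal{E}$; then $x_v \neq 0$. The nullvector condition at $v$ reads $f_{\mathcal{T},v}(\mathbf{x}) = \sum_{e \ni v} \prod_{w \in e \setminus \{v\}} x_w = 0$, the sum being over all $e \in E(\mathcal{T})$ with $v \in e$. The summand for $e_{1}$ is $\prod_{w \in e_{1} \setminus \{v\}} x_w$, a product over the nonempty set $e_{1} \setminus \{v\}$ of coordinates lying in $N$, hence nonzero. As the whole sum vanishes, at least one further summand is nonzero; it is indexed by an edge $e_{2} \ni v$ with $e_{2} \neq e_{1}$ for which $x_w \neq 0$ for every $w \in e_{2} \setminus \{v\}$. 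Combined with $x_v \neq 0$, this forces $e_{2} \subseteq N$, so $e_{2} \in \mathcal{E}$, and hence $v$ lies in two distinct edges of $\mathcal{E}$.

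It remains to reach a contradiction. As a sub-hypergraph of the linear hypertree $\mathcal{T}$, the hypergraph $\mathcal{K}$ is linear and acyclic, every one of its edges has rank at least $2$, and it has at least one edge. I would finish by observing that such a hypergraph must contain a vertex of degree at most $1$: its vertex--edge incidence bipartite graph $B$ is a forest (a $4$-cycle in $B$ would contradict linearity, and any longer cycle of $B$ would be a cycle of $\mathcal{K}$), and a nonempty forest has a leaf, which cannot be an ``edge'' vertex of $B$ since each of those has $B$-degree $|e| \geq 2$; hence it is a vertex $v$ of $\mathcal{K}$ with $\deg_{\mathcal{K}}(v) \leq 1$, contradicting the previous paragraph. (Equivalently, compare the hyperforest identity $\sum_{e \in \mathcal{E}}(|e|-1) = |V(\mathcal{K})| - c$, where $c \geq 1$ is the number of components of $\mathcal{K}$, with $\sum_{e \in \mathcal{E}}|e| = \sum_{v} \deg_{\mathcal{K}}(v) \geq 2\,|V(\mathcal{K})|$.) I expect this final step --- pinning down the right ``a nonempty linear hyperforest has a low-degree vertex'' statement and its brief justification --- to be the only delicate point; everything else is a direct reading of the eigenvalue equations, and the argument does not care whether $\mathcal{T}$ is uniform or of mixed rank.
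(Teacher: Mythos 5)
Your proposal is correct and is essentially the paper's argument: the paper also forms the sub-hypergraph of edges on which $\mathbf{x}$ is nowhere zero, notes it is a nonempty (hyper)forest and hence has a pendant vertex $v$, and then observes that $f_{\mathcal{T},v}(\mathbf{x})$ reduces to a single nonzero monomial, a contradiction. You merely reorganize the same contradiction (deriving ``every vertex has degree at least $2$'' from the eigenvalue equations and then contradicting the existence of a pendant vertex), and your explicit justification that a nonempty linear acyclic hypergraph has a degree-$\le 1$ vertex fills in a step the paper takes for granted.
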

\begin{proof}
By way of contradiction, suppose that there exists an edge $e$ so that $x_v \neq 0$ for all $v\in e$. Let $E$ be the collection of all such edges of $\mathcal{T}$ with this property, and further define $\mathcal{T}'$ to be the subgraph of $\mathcal{T}$ containing all edges in $E$ (and no isolated vertices). Then $\mathcal{T}'$ is a nonempty forest, so $\mathcal{T}'$ contains a pendant vertex, $v$. Since $f_{\mathcal{T}',v}$ contains as an addend exactly one monomial which evaluates to a product of nonzero values, $f_{\mathcal{T}',v}(\mathbf{x}) \neq 0$. By the construction of $\mathcal{T}'$, $f_{\mathcal{T}',v}(\mathbf{x}) = f_{\mathcal{T},v}(\mathbf{x})$ (since any monomials corresponding to edges in $E(\mathcal{T}) \setminus E$ incident to $v$ have at least one vertex $u$ with $x_u = 0$), further implying $f_{\mathcal{T},v}(\mathbf{x}) \neq 0$, a contradiction. 
\end{proof}

\begin{prop}\label{Prop:noMoreThanTwo}
Let $\mathcal{T}$ be a (not necessarily uniform) linear hypertree with pendant edge $e$. Furthermore, let $S \subseteq V(\mathcal{T})$ so that $\mathcal{V}^{\mathcal{T}}(S)$ is an irreducible component of $\ker(\mathcal{T})$. Then $|e \cap S| \leq 2$. 
\end{prop}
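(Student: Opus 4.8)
The plan is to argue by contradiction: assuming $|e\cap S|\ge 3$, I will produce an irreducible subvariety of $\ker(\mathcal{T})$ that properly contains $\mathcal{V}^{\mathcal{T}}(S)$, contradicting the fact that $\mathcal{V}^{\mathcal{T}}(S)$ is a maximal irreducible subvariety. The case $|e|\le 2$ is trivial, so I assume $|e|\ge 3$ and write $e=\{v_1,\ldots,v_r,u\}$ where $v_1,\ldots,v_r$ are pendant vertices of $\mathcal{T}$ and $u$ is the remaining vertex of $e$ (taking $u$ to be the unique non-pendant vertex of $e$ if one exists); since $e$ is a leaf edge and $|e|\ge 3$ we have $r\ge 2$. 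Set $\mathcal{T}^-:=\mathcal{T}-\{v_1,\ldots,v_r\}$, the linear hypertree obtained by deleting these pendant vertices (which deletes $e$ and nothing else), so that $u\in V(\mathcal{T}^-)$ and $E(\mathcal{T}^-)=E(\mathcal{T})\setminus\{e\}$.

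The technical core is a ``splitting'' identity: for any $W\subseteq V(\mathcal{T})$ with $|e\cap W|\ge 2$,
$\mathcal{V}^{\mathcal{T}}(W)=\mathcal{V}^{\mathcal{T}^-}(W\cap V(\mathcal{T}^-))\times L_W$,
where $L_W\subseteq \mathbb{C}^{\{v_1,\ldots,v_r\}}$ is the coordinate subspace cut out by $\{x_{v_j}=0: v_j\in W\}$. This is purely algebraic: for a pendant $v_j$ we have $f_{\mathcal{T},v_j}=x_u\prod_{l\ne j}x_{v_l}$; for $u$ we have $f_{\mathcal{T},u}=f_{\mathcal{T}^-,u}+\prod_{k=1}^{r}x_{v_k}$ (linearity of $\mathcal{T}$ guarantees $e$ contributes exactly the one monomial $\prod_k x_{v_k}$ to the link of $u$); and $f_{\mathcal{T},v}=f_{\mathcal{T}^-,v}$ for every other $v\in V(\mathcal{T}^-)$, since such $v$ is not in $e$. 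When at least two vertices of $e$ are set to zero on $\mathcal{V}^{\mathcal{T}}(W)$, every monomial arising from $e$ vanishes identically: if $u\in W$ then $x_u=0$ annihilates all the $f_{\mathcal{T},v_j}$ at once, while if $u\notin W$ then two of the pendant coordinates vanish, which annihilates each $\prod_{l\ne j}x_{v_l}$ and $\prod_k x_{v_k}$. Hence the surviving defining equations of $\mathcal{V}^{\mathcal{T}}(W)$ are exactly $\{x_v=0:v\in W\cap V(\mathcal{T}^-)\}$ together with $\{f_{\mathcal{T}^-,v}=0:v\in V(\mathcal{T}^-)\}$ and $\{x_{v_j}=0:v_j\in W\cap\{v_1,\ldots,v_r\}\}$, with the remaining pendant coordinates free, which is precisely the asserted product.

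Granting this, I conclude as follows. Since $|e\cap S|\ge 3$ and at most one vertex of $e$ is $u$, at least two pendant vertices of $e$ lie in $S$; pick one, $v_{i_0}$, and set $S':=S\setminus\{v_{i_0}\}$, so $|e\cap S'|\ge 2$. Because $v_{i_0}\notin V(\mathcal{T}^-)$ we have $S\cap V(\mathcal{T}^-)=S'\cap V(\mathcal{T}^-)=:T$, and the splitting identity gives $\mathcal{V}^{\mathcal{T}}(S)=\mathcal{V}^{\mathcal{T}^-}(T)\times L_S$ and $\mathcal{V}^{\mathcal{T}}(S')=\mathcal{V}^{\mathcal{T}^-}(T)\times L_{S'}$ with $L_S\subsetneq L_{S'}$ (the space $L_{S'}$ has exactly one more free coordinate, $x_{v_{i_0}}$). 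Since $\mathbf{0}\in\mathcal{V}^{\mathcal{T}^-}(T)$, this inclusion is proper, so $\mathcal{V}^{\mathcal{T}}(S)\subsetneq\mathcal{V}^{\mathcal{T}}(S')$; moreover $\mathcal{V}^{\mathcal{T}}(S')\cong\mathcal{V}^{\mathcal{T}}(S)\times\mathbb{C}$, hence $\mathcal{V}^{\mathcal{T}}(S')$ is irreducible exactly because $\mathcal{V}^{\mathcal{T}}(S)$ is (over the algebraically closed field $\mathbb{C}$, a product of a variety with an affine line is irreducible iff the variety is). Finally $\mathcal{V}^{\mathcal{T}}(S')\subseteq\mathcal{V}^{\mathcal{T}}(\emptyset)=\ker(\mathcal{T})$, so $\mathcal{V}^{\mathcal{T}}(S)$ is not maximal among irreducible subvarieties of $\ker(\mathcal{T})$, contradicting the hypothesis that it is an irreducible component. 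Therefore $|e\cap S|\le 2$.

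I expect the main obstacle to be stating and verifying the splitting identity cleanly, and in particular the bookkeeping that all $e$-monomials vanish on $\mathcal{V}^{\mathcal{T}}(W)$ under $|e\cap W|\ge 2$ and that the non-$W$ pendant coordinates really are unconstrained; this is where the bound ``two, not merely one'' is genuinely used, since with only a single pendant vertex of $e$ zeroed, the equation $f_{\mathcal{T},v_j}=x_u\prod_{l\ne j}x_{v_l}$ for that $v_j$ would survive and the product structure would fail. The rest --- the product decomposition, irreducibility of $X\times\mathbb{C}$, and the maximality contradiction --- is routine.
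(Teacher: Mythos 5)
Your proposal is correct and follows essentially the same route as the paper: the paper also assumes $|e \cap S| \geq 3$, removes one pendant vertex $v_1 \in e \cap S$, observes that since at least two vertices of $e$ remain in $S \setminus \{v_1\}$ every evaluated link polynomial $\phi_{S\setminus\{v_1\}} f_{\mathcal{T},v}$ coincides with $\phi_S f_{\mathcal{T},v}$, and concludes that $\mathcal{V}^{\mathcal{T}}(S\setminus\{v_1\})$ is an irreducible variety properly containing $\mathcal{V}^{\mathcal{T}}(S)$, contradicting maximality. Your explicit splitting identity $\mathcal{V}^{\mathcal{T}}(W)=\mathcal{V}^{\mathcal{T}^-}(W\cap V(\mathcal{T}^-))\times L_W$ and the check that $\mathbf{0}$ witnesses properness simply make precise the cylinder structure the paper leaves implicit.
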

\begin{proof}
By way of contradiction, suppose $|e \cap S| \geq 3$. Let $A = e \cap S$. Then at least two vertices of $A$ are pendant vertices of $e$, namely $v_1$ and $v_2$. Then $\mathcal{V}^\mathcal{T}(S) \subsetneq \mathcal{V}^{\mathcal{T}}(S \setminus \{v_1\})$, but $\mathcal{V}^{\mathcal{T}}(S \setminus \{v_1\})$ is irreducible since $\phi_{S\setminus \{v_1\}} f_{\mathcal{T}, v} = \phi_S f_{\mathcal{T}, v}$ for every $v$ not a pendant vertex of $e$, and $\phi_{S\setminus \{v_1\}} f_{\mathcal{T}, u} = 0$ for every pendant vertex $u \in e$, since $|A\setminus \{v_1\}| \geq 2$. Thus, the irreducibility of $\mathcal{V}^{\mathcal{T}}(S \setminus \{v_1\})$ contradicts that $\mathcal{V}^{\mathcal{T}}(S)$ is an irreducible component of $\ker(\mathcal{T})$, completing the proof. 
\end{proof}

Notice also that Proposition \ref{Prop:vertex cover} applies to $2$-trees as well as general linear hypertrees: the zero locus of any null vector is a vertex cover. However, in both the tree and hypertree settings, this proposition is not an equivalence, i.e., there are vertex covers of trees/hypertrees which do not correspond to the zero set of a nullvector. Proposition \ref{Prop:SkewZeroClosedGeneratenullvectors} gives that the vertex cover must also be skew zero forcing closed. A similar situation arises for hypertrees. Consider the following example (note that smaller examples exist), where the filled vertices denote a vertex cover of the given hypertree. 

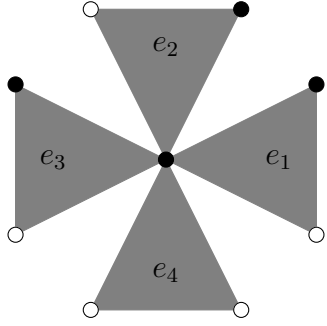
\begin{figure}[H]
\centering
\begin{tikzpicture}
\filldraw[gray] (0,0) -- (2,1) -- (2,-1) -- (0,0) -- (1,-2) -- (-1,-2) -- (0,0) -- (-2,-1) -- (-2,1) -- (0,0) -- (-1,2) -- (1,2) -- (0,0);

\filldraw[black](0,0)circle(0.1);
\filldraw[black](2,1)circle(0.1);
\filldraw[white](2,-1)circle(0.1);
\draw[black](2,-1)circle(0.1);
\filldraw[black](1,2)circle(0.1);
\filldraw[white](-1,2)circle(0.1);
\draw[black](-1,2)circle(0.1);
\filldraw[black](-2,1)circle(0.1);
\filldraw[white](-2,-1)circle(0.1);
\filldraw[white](-1,-2)circle(0.1);
\filldraw[white](1,-2)circle(0.1);
\draw[black](-2,-1)circle(0.1);
\draw[black](-1,-2)circle(0.1);
\draw[black](1,-2)circle(0.1);

\node at (1.5,0) {$e_1$};
\node at (0,1.5) {$e_2$};
\node at (-1.5,0) {$e_3$};
\node at (0,-1.5) {$e_4$};
\end{tikzpicture}
\caption{$3$-uniform hyperstar on four edges}
\label{fig:hypergraphEx}
\end{figure}
If $U$ is the set of filled vertices depicted in the previous figure and $v$ is the unique vertex of degree four, then $\phi_{U}f_{\mathcal{T}, v}$ contains exactly one monomial, namely the monomial corresponding to $e_4$. If a product of variables is zero, then at least one of the variables is zero, so $U$ is not the set of zero entries for any nullvector of this hypergraph. This observation leads to a hypergraph skew zero forcing rule (which differs from the hypergraph zero forcing rule presented as Def.~1.5 in \cite{Hog2020}). 
\begin{definition}\label{Def:SZFruleHyper}
Let $\mathcal{H}$ be a hypergraph. 
\begin{itemize}
\item A subset $Z \subseteq V(\mathcal{H})$ defines an initial coloring by filling all vertices of $Z$ and all other vertices remain unfilled. 
\item The skew zero forcing rule at a vertex $v$ says: If a vertex $v$ is incident to exactly one edge $e$ with no filled vertices in $e \setminus\{v\}$, then change the color of one vertex in $e\setminus \{v\}$ to filled. 
\item An (SZF-)derived set of an initial coloring $Z$ is the result of applying the skew zero forcing rule until no more changes are possible. 
\end{itemize} 
\end{definition}
In contrast to trees, the skew zero forcing  rule for hypergraphs, even just for hypertrees, does not necessarily generate a unique derived set. For example, if $\mathcal{T}$ is the $3$-uniform hyperedge with $V(\mathcal{T}) = \{u,v,w\}$ and $Z = \{u\}$, then $\{u,v\}$ and $\{u,w\}$ are both derived sets of $Z$. Similarly, if $Z = \emptyset$, then all elements of $\binom{V(\mathcal{T})}{2}$ are derived sets of $Z$. Thus, the ``skew zero forcing closure of $\emptyset$" that is considered in Theorem \ref{Thm:BigEquiv} is not well defined for hypertrees, as there are many sets derived from $\emptyset$.  However, we still sometimes refer to sets which are ``stalled'' in the sense that the SZF rule cannot be applied to them anywhere as ``SZF-closed sets''.

On the other hand, a reasonable choice of analogue to kernel-closed sets in this context is the family of zero loci of nullvectors.  Indeed, throughout the sequel, we refer to a set $S \subseteq V(\mathcal{T})$ as ``kernel-closed'' if it is the zero locus of a nullvector.  This raises the question if the family of SZF-closed vertex covers and the kernel-closed sets are the same. Unfortunately, they are not. Consider the hypergraph given in Figure \ref{fig:hypergraphEx}. Let $\mathcal{T}$ be the subhypertree given by edges $e_1$ and $e_2$. If $v$ is the vertex of degree $2$ in $\mathcal{T}$, the generating sets for irreducible components of $\ker(\mathcal{T})$ are $\{v\}$ and $V(\mathcal{T}) \setminus \{v\}$. However, $v$ together with a pendant vertex on each edge form a derived set of $\emptyset$. So, not all sets derived from $\emptyset$ generate irreducible components of $\ker(\mathcal{T})$. However, the converse is true, as is shown by the following result.

\begin{theorem}\label{thm:allcomponents} For each irreducible component $C$ of $\ker(\mathcal{T})$, there is an SZF-closed vertex cover $S \subseteq V(\mathcal{T})$ so that $C = \mathcal{V}^\mathcal{T}(S)$.
\end{theorem}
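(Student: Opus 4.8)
The plan is to prove the contrapositive-flavored statement constructively: starting from an irreducible component $C$ of $\ker(\mathcal{T})$, produce an SZF-closed vertex cover $S$ with $C = \mathcal{V}^{\mathcal{T}}(S)$. First I would recall from the discussion preceding Proposition~\ref{Prop:vertex cover} that every component of $\ker(\mathcal{T})$ is of the form $\mathcal{V}^{\mathcal{T}}(S)$ for \emph{some} $S \subseteq V(\mathcal{T})$, and one can take $S$ to be the generating set of $C$, i.e., $S = \{v : x_v = 0 \text{ for all } \mathbf{x} \in C\}$. By Proposition~\ref{Prop:vertex cover}, any point of $C$ has zero locus a vertex cover, and since $C$ is irreducible and $\mathcal{T}$ is a linear hypertree, a generic point of $C$ has zero locus exactly $S$ (otherwise $C$ would be covered by the proper subvarieties obtained by additionally zeroing out one more coordinate, contradicting irreducibility — this is essentially the argument already used in Proposition~\ref{Prop:noMoreThanTwo}). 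So $S$ is a vertex cover; the content of the theorem is that $S$ is also SZF-closed.

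The key step is to show $S$ is SZF-closed, i.e., stalled. Suppose not: some vertex $v$ is incident to exactly one edge $e$ with $(e \setminus \{v\}) \cap S = \emptyset$, so the SZF rule applies at $v$. I claim this forces a contradiction with $\mathcal{V}^{\mathcal{T}}(S) = C$ being an irreducible \emph{component}. The idea is to analyze $\phi_S f_{\mathcal{T}, v}$: since $S$ is a vertex cover, every edge $e' \ni v$ with $e' \neq e$ contains a vertex of $S$ other than $v$ (it can't be $v$ that's covering $e'$ if... actually $v$ might be in $S$ or not). Let me be careful: the hypothesis is that $e$ is the \emph{unique} edge at $v$ all of whose other vertices avoid $S$. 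Every other edge $e' \ni v$ has some vertex $\neq v$ in $S$, hence its monomial in $f_{\mathcal{T},v}$ dies under $\phi_S$. Thus $\phi_S f_{\mathcal{T}, v} = \prod_{w \in e \setminus \{v\}} x_w$, a single monomial. On the variety $\mathcal{V}^{\mathcal{T}}(S)$ this monomial must vanish, so $\mathcal{V}^{\mathcal{T}}(S)$ decomposes as $\bigcup_{w \in e \setminus \{v\}} \mathcal{V}^{\mathcal{T}}(S \cup \{w\})$ — a union of proper subvarieties, since $|e \setminus \{v\}| \geq 1$ and each added $w \notin S$ strictly shrinks the variety (some point of $C$ has $x_w \neq 0$). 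This contradicts the irreducibility of $C = \mathcal{V}^{\mathcal{T}}(S)$ unless the decomposition is trivial, which it is not. Hence no SZF rule applies at any $v$, and $S$ is SZF-closed.

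The main obstacle I anticipate is the first step: justifying that the generating set $S$ of an irreducible component $C$ really satisfies $C = \mathcal{V}^{\mathcal{T}}(S)$ on the nose, rather than merely $C \subseteq \mathcal{V}^{\mathcal{T}}(S)$. One direction, $C \subseteq \mathcal{V}^{\mathcal{T}}(S)$, is immediate from the definition of $S$ (every $f_{\mathcal{T},v}$ vanishes on $C \subseteq \ker(\mathcal{T})$, and every $x_w$ with $w \in S$ vanishes on $C$ by choice of $S$). For the reverse, I would use that $\mathcal{V}^{\mathcal{T}}(S)$ is \emph{irreducible}: this is exactly the kind of fact established in the proof of Proposition~\ref{Prop:noMoreThanTwo}, where zeroing variables on a linear hypertree kills the Lagrangian polynomials of the pendant vertices and leaves the remaining ideal with a product structure that one can track inductively. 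Given that $\mathcal{V}^{\mathcal{T}}(S)$ is irreducible and contains $C$, and $C$ is a \emph{component} (maximal irreducible subvariety) of $\ker(\mathcal{T})$, while $\mathcal{V}^{\mathcal{T}}(S) \subseteq \ker(\mathcal{T})$ is itself irreducible, maximality forces $C = \mathcal{V}^{\mathcal{T}}(S)$. So the bulk of the work is an induction on the structure of the linear hypertree to verify irreducibility of $\mathcal{V}^{\mathcal{T}}(S)$ for the relevant $S$, peeling off pendant edges and applying Proposition~\ref{Prop:noMoreThanTwo} to control $|e \cap S|$ at each leaf.
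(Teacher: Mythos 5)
Your proposal is correct in substance and reaches the theorem by a route that differs from the paper's mainly in its direction of travel. The paper argues bottom-up: for each individual nullvector $\mathbf{x}$, the zero locus $Z(\mathbf{x})$ is a vertex cover (Proposition \ref{Prop:vertex cover}) and is SZF-closed (otherwise some $\phi_{Z(\mathbf{x})}f_{\mathcal{T},v}$ would be a single monomial evaluating to a nonzero value at $\mathbf{x}$), so Lemma \ref{Lem:IisPrime} makes each $\mathcal{V}^{\mathcal{T}}(Z(\mathbf{x}))$ irreducible; these sets cover $\ker(\mathcal{T})$, and Noetherianity then forces every irreducible component to equal one of them. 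You instead work top-down from the component $C$, taking $S$ to be its generating set and verifying directly that $S$ is an SZF-closed vertex cover before invoking irreducibility of $\mathcal{V}^{\mathcal{T}}(S)$ and maximality of $C$. Both arguments lean on the same key technical input, namely the irreducibility statement of Lemma \ref{Lem:IisPrime}: your appeal to ``the kind of fact established in Proposition \ref{Prop:noMoreThanTwo}'' is really an appeal to that lemma, whose inductive peeling of leaf edges is exactly the bulk of work you anticipate. What your version buys is a slightly more canonical conclusion (the $S$ produced is the generating set of $C$, not merely the zero locus of some point of $C$); what the paper's version buys is that SZF-closedness of $Z(\mathbf{x})$ comes essentially for free from $\mathbf{x}$ being a nullvector, with no need for a generic-point argument.

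One point needs repair. In your SZF-closedness step you derive a contradiction with ``the irreducibility of $C = \mathcal{V}^{\mathcal{T}}(S)$,'' but the equality $C = \mathcal{V}^{\mathcal{T}}(S)$ is only established in your final paragraph, and establishing it there uses the irreducibility of $\mathcal{V}^{\mathcal{T}}(S)$ (Lemma \ref{Lem:IisPrime}), which in turn requires already knowing that $S$ is SZF-closed. The circle is easy to break: run the single-monomial argument on $C$ itself rather than on $\mathcal{V}^{\mathcal{T}}(S)$. Every $f_{\mathcal{T},v}$ and every $x_w$ with $w \in S$ vanishes identically on $C$, hence so does $\phi_S f_{\mathcal{T},v} = \prod_{w \in e\setminus\{v\}}x_w$; as $C$ is irreducible, $C \subseteq \{x_w = 0\}$ for some $w \in e \setminus \{v\}$, which puts $w$ in the generating set $S$ and contradicts $(e\setminus\{v\})\cap S = \emptyset$. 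With that reordering your argument is sound.
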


We postpone the proof to build a few useful tools.

\begin{prop}[Prop. 5.20 in \cite{Milne17}] \label{prop:cartesian} If $V$ and $W$ are irreducible affine varieties over an algebraically closed field, then $V \times W$ is as well.
\end{prop}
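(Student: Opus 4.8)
The plan is to argue purely topologically, leveraging the irreducibility of the fibers of the projection onto $V$. Since irreducible varieties are nonempty, $V \times W$ is nonempty, and it suffices to rule out writing it as a union $V \times W = Z_1 \cup Z_2$ of two proper closed subsets. First I would fix such a decomposition (for contradiction) and examine one fiber at a time: for each $v \in V$, the fiber $\{v\} \times W$ is isomorphic as a variety to $W$ — the inclusion $W \cong \{v\} \times W \hookrightarrow V \times W$ is a closed immersion, so this isomorphism is a homeomorphism onto a closed subset and the subspace topology on the fiber is exactly that of $W$. Hence the fiber is irreducible, and the decomposition $\{v\} \times W = (Z_1 \cap (\{v\} \times W)) \cup (Z_2 \cap (\{v\} \times W))$ into relatively closed pieces forces $\{v\} \times W \subseteq Z_1$ or $\{v\} \times W \subseteq Z_2$.

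Next I would partition $V$ accordingly, setting $V_i = \{v \in V : \{v\} \times W \subseteq Z_i\}$ for $i \in \{1,2\}$, so that $V = V_1 \cup V_2$ by the previous step. If both $V_i$ are closed, then irreducibility of $V$ gives $V = V_1$ or $V = V_2$; in the former case every fiber lies in $Z_1$, whence $Z_1 = V \times W$, and symmetrically for the latter, contradicting that $Z_1, Z_2$ are proper. This reduces everything to showing each $V_i$ is closed.

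The crux, and the step needing the most care, is this closedness claim. For each fixed $w \in W$, the slice inclusion $\iota_w : V \to V \times W$, $v \mapsto (v,w)$, is a morphism of varieties and hence continuous, so $\iota_w^{-1}(Z_i) = \{v : (v,w) \in Z_i\}$ is closed in $V$. Since $v \in V_i$ precisely when $(v,w) \in Z_i$ for every $w \in W$, I can write
\[
V_i = \bigcap_{w \in W} \iota_w^{-1}(Z_i),
\]
an intersection of closed sets, which is therefore closed. With $V_i$ closed, the argument of the second paragraph concludes. The only subtlety is the geometric input that fibers and slices carry the expected Zariski topology as subvarieties of the product, which is exactly what the closed-immersion property of the coordinate inclusions supplies; granting this, the proof is complete.
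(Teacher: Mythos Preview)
Your argument is correct and is the standard topological proof of irreducibility of products: fiber-by-fiber containment in one of the two closed pieces, followed by the observation that the set of base points whose fiber lands in a given piece is an intersection of closed slices and hence closed. You are also careful about the one genuine subtlety, namely that the Zariski topology on $V \times W$ is not the product topology, and you handle it correctly by using that the coordinate inclusions $w \mapsto (v,w)$ and $v \mapsto (v,w)$ are morphisms (indeed closed immersions), so preimages of closed sets are closed.

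As for comparison with the paper: there is nothing to compare. The paper does not give its own proof of this proposition; it simply quotes it as Proposition~5.20 of \cite{Milne17} and uses it as a black box (chiefly in the ideal-theoretic form stated immediately after: if $I$ and $J$ are prime ideals in disjoint sets of variables, then $I+J$ is prime in the combined polynomial ring). Your proof is essentially the one found in Milne's notes, so you have supplied exactly the argument the paper defers to the reference.
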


In fact, the way we will often use Proposition \ref{prop:cartesian} is: if $I \subset \mathbb{C}[x_1,\ldots,x_n]$ and $J \subset \mathbb{C}[y_1,\ldots,y_m]$ are prime ideals and $I',J'$ are the ideals they generate in $\mathbb{C}[x_1,\ldots,x_n,y_1,\ldots,y_m]$, respectively, then $I'+J'$ is also a prime ideal, and $\cV(I'+J') = \cV(I) \times \cV(J)$.  The following lemma establishes that some ideals of the form $L_{\mathcal{T}, V} = \langle \mathbf{x}_V \cup \{\phi_{V} f_{\mathcal{T}, v} : v \in V(\mathcal{T})\} \rangle$ are prime. 

\begin{lemma}\label{Lem:IisPrime}
Let $\mathcal{T}$ be a linear hypertree where each edge has rank at least three. For any $U \subseteq V(\mathcal{T})$ with $U$ an SZF-closed vertex cover of $\mathcal{T}$, $L_{\mathcal{T}, U}$ is a prime ideal, and $\mathbf{x}_U \cup \{\phi_{U} f_{\mathcal{T}, v} : v \in V(\mathcal{T})\}\setminus\{0\}$ is an irredundant set of generators for it.
\end{lemma}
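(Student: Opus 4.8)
The plan is to first pin down exactly which of the listed generators are nonzero and what they look like, then push everything into the polynomial ring on the ``free'' variables. Set $F := V(\mathcal{T})\setminus U$. Since $U$ is a vertex cover, every edge meets $U$; hence for $v\notin U$ every monomial of $f_{\mathcal{T},v}$ contains a variable killed by $\phi_U$, so $\phi_U f_{\mathcal{T},v}=0$, while for $v\in U$,
$$\phi_U f_{\mathcal{T},v}=g_v:=\sum_{e\,:\,e\cap U=\{v\}}\ \prod_{w\in e\setminus\{v\}}x_w .$$
The hypothesis that $U$ is SZF-closed, read at $v$, says the number of edges $e\ni v$ with $(e\setminus\{v\})\cap U=\emptyset$ — equivalently with $e\cap U=\{v\}$ — is never exactly $1$, so each $g_v$ is either $0$ or a sum of at least two monomials. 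Linearity of $\mathcal{T}$ forces the supports $e\setminus\{v\}$ occurring in a fixed $g_v$ to be pairwise disjoint (two edges through $v$ meet only in $v$), and rank $\ge 3$ forces each support to have size $\ge 2$; in particular every monomial of every $g_v$ lies in $\mathbb{C}[\mathbf{x}_F]$. Writing $U^\ast:=\{v\in U:g_v\neq 0\}$, we obtain $L_{\mathcal{T},U}=\langle x_v:v\in U\rangle+J$ with $J:=\langle g_v:v\in U^\ast\rangle\subseteq\mathbb{C}[\mathbf{x}_F]$, so modding out the variables $x_v$ ($v\in U$) reduces both assertions to: $J$ is prime in $\mathbb{C}[\mathbf{x}_F]$, and $\{g_v:v\in U^\ast\}$ is an irredundant generating set of $J$.

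Irredundancy is immediate from two evaluations. Fixing $v_0\in U^\ast$, pick an edge $e^\ast$ with $e^\ast\cap U=\{v_0\}$ and evaluate at the point with $x_w=1$ for $w\in e^\ast\setminus\{v_0\}$, all other free coordinates $0$, and $x_v=0$ for $v\in U$: since $|e\cap e^\ast|\le 1<|e\setminus\{v\}|$ for every other edge, the only monomial of any $g_v$ not hitting a zero coordinate is the $e^\ast$-monomial of $g_{v_0}$, so every generator vanishes there except $g_{v_0}$, which equals $1$; hence $g_{v_0}\notin\langle g_v:v\in U^\ast\setminus\{v_0\}\rangle$. Likewise the point with $\mathbf{x}_F\equiv0$, $x_{v_0}=1$, and the remaining $x_v$ ($v\in U$) zero kills every generator but $x_{v_0}$. (This half uses only linearity, not acyclicity.)

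For primality I would induct on $|E(\mathcal{T})|$ (the edgeless case being trivial). If $\mathcal{T}$ has a leaf edge $e_0$ with $|e_0\cap U|\ge 2$ — which includes the case $|E(\mathcal{T})|=1$ — then $e_0$ contributes no monomial to any $g_v$, the pendant vertices of $e_0$ outside $U$ contribute only inert polynomial variables, and $\mathcal{T}':=\mathcal{T}-e_0$ with $U':=U\cap V(\mathcal{T}')$ is again a componentwise linear hypertree of rank $\ge 3$ carrying an SZF-closed vertex cover (the SZF and cover conditions are local, and every surviving vertex keeps the same ``counting'' edges). One checks $L_{\mathcal{T},U}=\langle x_p:p\in e_0\cap U\rangle+L_{\mathcal{T}',U'}$ with the two summands on disjoint variable sets, so primality follows from the inductive hypothesis and Proposition \ref{prop:cartesian} (in its ``a sum of prime ideals on disjoint variable sets is prime'' form).

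Otherwise every leaf edge meets $U$ exactly in its attachment vertex — a pendant vertex being the sole cover vertex of its leaf edge would violate SZF-closedness there — so $U^\ast\ne\emptyset$; root $\mathcal{T}$ and let $q$ be a star vertex of maximal depth. Every edge of $E_q:=\{e:e\cap U=\{q\}\}$ other than the parent edge of $q$ descends into a subtree with no star vertex, so the free vertices of such a ``downward'' edge occur in no $g_v$ except $g_q$, where each appears in the single monomial $\prod_{w\in e\setminus\{q\}}x_w$; and $E_q$ has at least one downward edge (at most one of its $\ge 2$ members is the parent edge). Fix a downward edge $e^\downarrow=\{q,p_1,\dots,p_r\}$ with $r\ge 2$ and write $g_q=x_{p_1}\cdots x_{p_r}+g_q^\natural$, where $g_q^\natural$ (the sum over $E_q\setminus\{e^\downarrow\}$) is nonzero and lies in $\mathbb{C}[\mathbf{x}_{F\setminus\{p_1,\dots,p_r\}}]$, as do all $g_v$ with $v\neq q$; hence, putting $D:=\mathbb{C}[\mathbf{x}_{F\setminus\{p_1,\dots,p_r\}}]/\langle g_v:v\in U^\ast\setminus\{q\}\rangle$,
$$\mathbb{C}[\mathbf{x}_F]/J\ \cong\ D[x_{p_1},\dots,x_{p_r}]\big/\langle x_{p_1}\cdots x_{p_r}+\bar c\rangle,\qquad \bar c:=\text{image of }g_q^\natural\text{ in }D .$$
Now $\langle g_v:v\in U^\ast\setminus\{q\}\rangle$ is (the free part of) $L_{\mathcal{T}^\sharp,U^\sharp}$ for $\mathcal{T}^\sharp:=\mathcal{T}$ with every edge of $E_q$ deleted, and $U^\sharp:=U\cap V(\mathcal{T}^\sharp)$ remains an SZF-closed cover of the (disconnected) hypertree $\mathcal{T}^\sharp$ — deleting the $E_q$-edges zeroes the count at $q$ and leaves all other counts untouched — so $D$ is a domain by the inductive hypothesis and Proposition \ref{prop:cartesian}, and an evaluation as before gives $\bar c\neq0$. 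It then suffices to invoke the elementary lemma that for a domain $D$ and $c\in D\setminus\{0\}$, the ring $D[y_1,\dots,y_r]/(y_1\cdots y_r+c)$ is again a domain (because $y_1$ is a prime element of $D[y_1,\dots,y_r]$ not dividing $c$, hence a nonzerodivisor mod $(y_1\cdots y_r+c)$; localizing at $y_1\cdots y_{r-1}$ and solving for $y_r$ identifies the localization with the domain $D[y_1^{\pm1},\dots,y_{r-1}^{\pm1}]$, into which the ring injects); this also absorbs the sub-case where all of $E_q$ is downward. The real work, and the main obstacle, sits here: choosing precisely which edges to delete so the residual hypergraph — typically disconnected, possibly with isolated vertices — still carries an SZF-closed cover governed by the inductive hypothesis, which is exactly why a maximal-depth star vertex is used. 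Everything else — the structural description of the $g_v$, the two evaluations, the reduction to $D[\mathbf y]/(y_1\cdots y_r+\bar c)$, and the stated commutative-algebra lemma — is routine, with Proposition \ref{prop:cartesian} the only external input.
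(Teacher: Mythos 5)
Your proof is correct, but it takes a genuinely different route from the paper's. The paper fixes a pendant vertex $\ell$, orders the vertices $u_i$ with $\phi_U f_{\mathcal{T},u_i}\neq 0$ by distance from $\ell$, and inducts on the number of generators: because each new generator contains a variable not yet seen (a descendant of $u_{i}$), irredundancy is immediate, and primality is preserved at each step by formally doubling the shared variables, applying Proposition \ref{prop:cartesian} to the resulting disjoint-variable ideals, and then re-identifying the doubled variables via a quotient map. You instead induct on $|E(\mathcal{T})|$ with a two-case edge-deletion scheme, and your key algebraic input is the localization lemma that $D[y_1,\dots,y_r]/(y_1\cdots y_r+c)$ is a domain for $c\neq 0$; irredundancy comes from explicit point evaluations rather than from the fresh-variable observation. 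Each approach has its merits: the paper's is shorter once the gluing step is granted, whereas your localization lemma is fully self-contained and replaces the most delicate move in the paper's argument (re-identifying variables after invoking Proposition \ref{prop:cartesian} requires care, since images of prime ideals under surjections are not prime in general), and your evaluation technique does double duty by also certifying $\bar c\neq 0$. Two points you should tighten, neither of which is a real gap: the induction hypothesis must be carried for linear \emph{hyperforests} (your intermediate objects $\mathcal{T}'$ and $\mathcal{T}^{\sharp}$ are typically disconnected), which is harmless since components have disjoint variable sets and Proposition \ref{prop:cartesian} applies; and in your first case the decomposition should read $L_{\mathcal{T},U}=\langle x_p : p\in e_0\cap U,\ p\ \text{pendant}\rangle+L_{\mathcal{T}',U'}$, since the attachment vertex of $e_0$ may lie in $U\cap V(\mathcal{T}')$, in which case the two summands as you wrote them are not on disjoint variable sets (the ideal identity is still true, but the disjointness needed for Proposition \ref{prop:cartesian} is not).
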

\begin{proof}  
The generators of $L_{\mathcal{T}, U}$ are a finite collection of variables, as well as polynomials of a specific form: sums of monomials which are products of all but one vertex variable of an edge. 
Let $\mathcal{K} = \{\phi_{U} f_{\mathcal{T}, v} : v \in V(\mathcal{T})\} \setminus \{0\}$. 
Since $U$ is a vertex cover of $\mathcal{T}$, $\mathcal{K} = \{\phi_{U} f_{\mathcal{T}, v} : v \in V(\mathcal{T})\}\setminus\{0\} = \{\phi_{U} f_{\mathcal{T}, v} : v \in U\}\setminus\{0\}$.  Let $U' \subseteq U$ be any minimal set of $K:=|\mathcal{K}|$ vertices $v$ so that $\mathcal{K} = \{\phi_{U} f_{\mathcal{T}, v} : v \in U'\}\setminus\{0\}$. Since $U$ is SZF-closed, $U'$ contains no pendant vertices of $\mathcal{T}$. Let $\ell$ be a pendant vertex of $\mathcal{T}$, and label the elements of $U' = \{u_i\}_{i=1}^{K}$ so that if $i < j$, then $\dist(\ell, u_i) \leq \dist(\ell, u_j)$. 
We show by induction on $k \in \{0,\ldots,K\}$ that the ideal generated by $\mathbf{x}_U \cup \{\phi_U f_{\mathcal{T}, u_i}\}_{i=1}^k$ is prime, and that $\mathbf{x}_U \cup \{\phi_U f_{\mathcal{T}, u_i}\}_{i=1}^k$ is irredundant as generators. Then the base case $k = 0$ holds because $\mathbf{x}_U$, as just a collection of variables, generates a prime ideal and all such variables are necessary to generate $\langle \mathbf{x}_U \rangle$. Fix an integer $0 \leq k < K$ and suppose that the result holds for $\mathbf{x}_U \cup \{\phi_U f_{\mathcal{T}, u_i}\}_{i=1}^k$. Let $\mathcal{K}' = \mathbf{x}_U \cup \{\phi_U f_{\mathcal{T}, u_i}\}_{i=1}^{k+1}$ and define $g := \phi_U f_{\mathcal{T}, u_{k+1}}$. Since $\mathcal{T}$ is a tree and $\dist(\ell, u_i) \leq \dist(\ell, u_{k+1})$ for all $1\leq i\leq k$, some variables in $g$ do not appear as variables in $\mathbf{x}_U \cup \{\phi_U f_{\mathcal{T}, u_i}\}_{i=1}^k$, e.g., any variables of vertices incident to $u_{k+1}$ at distance $\dist(\ell, u_{k+1}) + 1$ from $\ell$, which exist since $u_{k+1}$ is not pendant in $\mathcal{T}$. 

Let $X$ be the set of variables of $g$ also occurring as variables of polynomials in $\mathcal{K}'\setminus\{g\}$. Define $Y$ to be the variables that appear in $g$ that are not contained in $X$, which is nonempty by the argument above. 
Additionally, let $Z$ be the collection of variables in polynomials of $\mathcal{K}'$ except the variables contained in $X$. Define a collection of new variables $X' := \{x_m' : x_m \in X\}$. Let the polynomial $g'$ be $g$ after application of the evaluation map that sends $x_m \mapsto x_m'$ for each $x_m \in X$. Let $I$ be the ideal generated by $\mathcal{K}'\setminus\{g\}$. The induction hypothesis gives that $I$ is prime. Note that $g = \phi_{U} f_{\mathcal{T}, u_{k+1}}$ is a sum of at least two monomials of positive degree by the skew-closedness of $U$. Thus, the ideal $\langle g'\rangle$ is prime because $g'$ is irreducible, since the linearity of $\mathcal{T}$ implies that no variable divides more than one monomial of $g'$. Proposition \ref{prop:cartesian} gives the primality of the ideal generated by $I+\langle g'\rangle$. Let $\sigma:\mathbb{C}[X\cup Z] \times \mathbb{C}[X'\cup Y] \to \mathbb{C}[X\cup Y\cup Z]$ be the quotient homomorphism $\sigma:f \mapsto f + \langle \{x_i - x_i' : x_i \in X\} \rangle$. Clearly, $\sigma$ is surjective, so Proposition $3.34$b in \cite{Milne17} (that surjective homomorphisms preserve primality) completes the proof of primality. 

Since $Y$ is nonempty, $g$ introduces a new variable in $\mathcal{K}'$.  Since $\mathcal{K}' \setminus \{g\}$ is irredundant by induction, we may conclude that $\mathcal{K}'$ is also irredundant.
\end{proof}

Notice then that if $\mathcal{T}$ is a linear hypertree where each edge has rank at least three and $U\subseteq V(\mathcal{T})$ is an SZF-closed vertex cover, then the codimension of $\mathcal{V}^{\mathcal{T}}(U)$ is $|U \cup \{\phi_{U} f_{\mathcal{T},v} : v \in V(\mathcal{T})\}\setminus\{0\}|$.

\begin{proof}[Proof of Theorem \ref{thm:allcomponents}]
If $\mathbf{x}$ is a nullvector of $\mathcal{T}$, then $Z(\mathbf{x})$ is an SZF-closed vertex cover of $\mathcal{V}(T)$ by Proposition \ref{Prop:vertex cover}. Therefore, Lemma \ref{Lem:IisPrime} gives that $\mathcal{V}^\mathcal{T}(Z(\mathbf{x}))$ is an irreducible variety. Note that $\mathbf{x} \in \mathcal{V}^\mathcal{T}(Z(\mathbf{x}))$, and $\mathcal{V}^\mathcal{T}(Z(\mathbf{x})) \subseteq \ker(\mathcal{T})$, so, 
$$
\ker(\mathcal{T}) = \bigcup_{\mathbf{x} \in \ker(\mathcal{T})} \mathcal{V}^\mathcal{T}(Z(\mathbf{x})).
$$
Thus, the minimal sets $\mathcal{V}^\mathcal{T}(Z(\mathbf{x}))$ are in bijection with the irreducible components of $\ker(\mathcal{T})$ (essentially because $\mathbb{C}[x_1,\ldots,x_n]$ is Noetherian), so $C = \mathcal{V}^\mathcal{T}(Z(\mathbf{x}))$ for some $\mathbf{x}$.
\end{proof}

\begin{lemma}\label{Lem:NotOneLightEdge}
Let $\mathcal{T}$ be a hypertree so that every edge has rank at least three. Let $U \subseteq V(\mathcal{T})$ be given so that $\mathcal{V}^{\mathcal{T}}(U)$ is an irreducible component of $\ker(\mathcal{T})$. Then for each $v \in V(\mathcal{T})$, $|\{e \in E(\mathcal{T}) : e \cap U = \{v\}\}| \neq 1$, i.e., the skew zero forcing rule stalls on $U$. 
\end{lemma}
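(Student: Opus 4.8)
The statement asserts that if $\mathcal{V}^{\mathcal{T}}(U)$ is an irreducible component of $\ker(\mathcal{T})$, then $U$ is stalled under the hypergraph SZF rule, i.e., there is no vertex $v$ incident to \emph{exactly one} edge $e$ with $e \cap U = \{v\}$. The plan is to argue by contradiction: suppose some $v \in V(\mathcal{T})$ has a unique edge $e$ with $e \cap U = \{v\}$. The idea is to show that this forces $\mathcal{V}^{\mathcal{T}}(U)$ to be a \emph{proper} subvariety of some other irreducible subvariety of $\ker(\mathcal{T})$, contradicting that it is a component. The natural candidate is $\mathcal{V}^{\mathcal{T}}(U \setminus \{v\})$, so the first step is to verify that $U \setminus \{v\}$ is still a vertex cover (this is where uniqueness of $e$ matters: if $v$ were removed from $U$, then $e$ is the only edge that might become uncovered, and it still contains the other vertices of $e \setminus \{v\}$ which are in $U$ — wait, actually we must be careful, since $e \cap U = \{v\}$ means removing $v$ uncovers $e$).

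Let me re-plan with the correct target. Since $e \cap U = \{v\}$, removing $v$ from $U$ does \emph{not} leave a vertex cover, so $\mathcal{V}^{\mathcal{T}}(U \setminus \{v\})$ is not directly the right object. Instead, the plan is to exploit that the single monomial of $\phi_U f_{\mathcal{T},v}$ coming from $e$ — namely $\prod_{w \in e, w \ne v} x_w$ — is the \emph{only} surviving term in $\phi_U f_{\mathcal{T},v}$ contributed by an edge at $v$ with all-but-one vertex outside $U$; more precisely, every other edge $e'$ at $v$ has $|e' \cap U| \ge 2$, so $\phi_U$ kills its monomial. Hence $\phi_U f_{\mathcal{T},v}$ is exactly (a nonzero multiple of) the single monomial $\prod_{w \in e \setminus \{v\}} x_w$ — \emph{unless} there are other edges at $v$ with $|e' \cap U| = 1$, but then $e'$ would be a \emph{second} edge with $e' \cap U = \{v\}$... no: the condition is that $v$ is incident to exactly one edge $e$ with \emph{no} filled vertex in $e \setminus \{v\}$; but $e \cap U = \{v\}$ means $v \in U$. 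Actually $U$ is a vertex cover and $e \cap U = \{v\}$ means $v \in U$; for a different edge $e'$ at $v$, if $e' \cap U = \{v\}$ too, that's a second such edge. So under our contradiction hypothesis, $e$ is unique, and $\phi_U f_{\mathcal{T},v}$ is precisely the monomial $m := \prod_{w \in e \setminus \{v\}} x_w$ (up to scalar). Then in the ideal $L_{\mathcal{T},U}$ defining $\mathcal{V}^{\mathcal{T}}(U)$, the generator corresponding to $v$ is the single monomial $m$, which is a product of variables and hence reducible — so $L_{\mathcal{T},U}$ contains a reducible monomial generator, contradicting primality of $L_{\mathcal{T},U}$ established in Lemma \ref{Lem:IisPrime} (for edges of rank $\ge 3$, $m$ has at least two factors, hence $\mathbf{x}_U \cup \{m\} \subseteq L_{\mathcal{T},U}$ forces one of the variables $x_w$, $w \in e \setminus \{v\}$, into the radical; a prime ideal containing $\prod_{w} x_w$ must contain some $x_w$, giving a strictly larger vertex set whose associated variety properly contains our component).

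More carefully, here is the chain I would write down. Suppose for contradiction $v$ has a unique edge $e$ with $e \cap U = \{v\}$; since $e$ has rank $\ge 3$, $e \setminus \{v\}$ has at least two vertices, all outside $U$. First, observe $U$ is still a vertex cover and SZF-closedness is not needed — just use Lemma \ref{Lem:IisPrime}: $L_{\mathcal{T},U}$ is prime. Second, compute $\phi_U f_{\mathcal{T},v}$: it is a sum over edges $e'$ at $v$ of $\prod_{w \in e' \setminus \{v\}} x_w$, with $\phi_U$ setting $x_w \mapsto 0$ for $w \in U$; every $e' \ne e$ at $v$ has $|e' \cap U| \ge 2$ (else $e'$ would be a second edge with $e' \cap U \subseteq \{v\}$, and since $v \in U$ and $U$ covers $e'$, in fact $e' \cap U = \{v\}$, contradicting uniqueness — here I should double-check the degenerate case $|e' \cap U| \ge 1$ always, and rule out $=1$ by uniqueness); so all those monomials die, and $\phi_U f_{\mathcal{T},v} = \prod_{w \in e \setminus \{v\}} x_w =: m$, a monomial with $\ge 2$ distinct variable factors. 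Third: $m \in L_{\mathcal{T},U}$ and $L_{\mathcal{T},U}$ is prime, so some variable $x_w$ with $w \in e \setminus \{v\}$ lies in $L_{\mathcal{T},U}$; thus $L_{\mathcal{T},U} \subseteq \langle \mathbf{x}_{U \cup \{w\}} \cup \{\phi_{U \cup \{w\}} f_{\mathcal{T},v'} : v'\}\rangle = L_{\mathcal{T}, U \cup \{w\}}$ (one checks the evaluation is compatible), i.e., $\mathcal{V}^{\mathcal{T}}(U \cup \{w\}) \subseteq \mathcal{V}^{\mathcal{T}}(U)$; but also $\mathcal{V}^{\mathcal{T}}(U \cup \{w\}) \subseteq \ker(\mathcal{T})$, and it is nonempty and properly... hmm, actually I want the reverse — I want a variety properly \emph{containing} $\mathcal{V}^{\mathcal{T}}(U)$. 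The cleaner route: since $x_w \in L_{\mathcal{T},U}$, every point of $\mathcal{V}^{\mathcal{T}}(U)$ has $w$-coordinate zero; but $w \notin U$, so by definition of "irreducible component with generating set $U$" (the generating set is the set of \emph{common} zeros of the component, cf. the discussion after Definition of $\mathcal{V}^G(S)$ and Theorem \ref{thm:allcomponents}), $U$ would not be the generating set — contradiction, because Theorem \ref{thm:allcomponents} / Lemma \ref{Lem:IisPrime} identifies $U$ itself as (containing, hence equal to) the generating set of the component.

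\textbf{Main obstacle.} The delicate point is the bookkeeping in Step 3: translating "$x_w \in L_{\mathcal{T},U}$" into a genuine contradiction with "$U$ is the generating set of the component $\mathcal{V}^{\mathcal{T}}(U)$." I expect the cleanest formulation is: the generating set of the irreducible component $\mathcal{V}^{\mathcal{T}}(U)$ — the intersection of zero loci of all its nullvectors — equals $\{u : x_u \in \sqrt{L_{\mathcal{T},U}}\} = \{u : x_u \in L_{\mathcal{T},U}\}$ (using primality, hence radicality of $L_{\mathcal{T},U}$); but by the irredundancy statement in Lemma \ref{Lem:IisPrime}, the variables $\mathbf{x}_U$ are an irredundant part of the generating set, and no \emph{additional} variable $x_w$ ($w \notin U$) can lie in $L_{\mathcal{T},U}$ — otherwise $\mathcal{V}^{\mathcal{T}}(U) = \mathcal{V}^{\mathcal{T}}(U \cup \{w\})$ would have been the component "labeled" by the larger set $U \cup \{w\} \supsetneq U$, and the bijection in Theorem \ref{thm:allcomponents} between components and \emph{minimal} such $\mathcal{V}^{\mathcal{T}}(S)$ (equivalently, between components and their generating sets) forces $U = U \cup \{w\}$. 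So the real content is just making precise that the generating set of $\mathcal{V}^{\mathcal{T}}(U)$ is exactly $U$, which follows from primality + irredundancy of Lemma \ref{Lem:IisPrime}; everything else is the short monomial computation above.
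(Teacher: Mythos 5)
Your opening computation is exactly the paper's: under the reductio hypothesis every edge $e' \neq e$ at $v$ meets $U$ in at least two vertices, so $\phi_U f_{\mathcal{T},v}$ collapses to the single square-free monomial $m = \prod_{w \in e\setminus\{v\}} x_w$ of degree at least two. The paper stops there and derives the contradiction directly from the irreducibility of the \emph{variety} $\mathcal{V}^{\mathcal{T}}(U)$, which is a hypothesis of the lemma. Your continuation has a genuine gap: you invoke Lemma \ref{Lem:IisPrime} to assert that the defining ideal $L_{\mathcal{T},U}$ is prime, explicitly claiming that ``SZF-closedness is not needed.'' It is needed: the hypothesis of Lemma \ref{Lem:IisPrime} is that $U$ is an \emph{SZF-closed} vertex cover, and its proof uses SZF-closedness precisely to guarantee that each $\phi_U f_{\mathcal{T},u}$ is a sum of at least two monomials and is therefore irreducible. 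Since SZF-closedness of $U$ is exactly what the present lemma is establishing---and is false under your reductio hypothesis---the appeal is circular. Worse, under that hypothesis $L_{\mathcal{T},U}$ has the reducible generator $m$ and is in general not prime, so the step ``$m \in L_{\mathcal{T},U}$ prime $\Rightarrow$ some $x_w \in L_{\mathcal{T},U}$'' has no support. If you want a prime ideal to work with, use $I(\mathcal{V}^{\mathcal{T}}(U))$, which is prime because the variety is irreducible by hypothesis; that legitimately yields $x_w \in I(\mathcal{V}^{\mathcal{T}}(U))$ for some $w \in e\setminus\{v\}$, i.e., $\mathcal{V}^{\mathcal{T}}(U) = \mathcal{V}^{\mathcal{T}}(U\cup\{w\})$.

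Even granting that repair, your closing contradiction leans on ``$U$ is the generating set of the component,'' i.e., $U$ equals the common zero locus of all points of $\mathcal{V}^{\mathcal{T}}(U)$. That is not a hypothesis of this lemma (compare Proposition \ref{Prop:GenSetsDerivedFromEmpty}, where it is assumed explicitly), so the conclusion ``no additional variable $x_w$ with $w \notin U$ can lie in the ideal'' is not available. Moreover, irredundancy of a generating set does not by itself prevent the ideal from containing an extra variable: $\{y,\, x+y\}$ is an irredundant generating set of $\langle x,y\rangle$, which contains $x$. The argument should instead run as the paper's does: the single degree-$\geq 2$ monomial $m$ among the defining equations forces $\mathcal{V}^{\mathcal{T}}(U) = \bigcup_{w \in e\setminus\{v\}} \mathcal{V}^{\mathcal{T}}(U\cup\{w\})$, and the contradiction with $\mathcal{V}^{\mathcal{T}}(U)$ being an irreducible component is extracted from that decomposition, without ever asserting primality of $L_{\mathcal{T},U}$.
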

\begin{proof}
If $|\{e \in E(\mathcal{T}) : e \cap U = \{v\}\}| = 1$, then $\phi_{U} f_{\mathcal{T}, v}$ is one monomial. Since every edge of $\mathcal{T}$ has rank at least three, $\phi_{U} f_{\mathcal{T}, v}$ is a product of at least two variables, contradicting that $\mathcal{V}^{\mathcal{T}}(U)$ forms an irreducible component of $\ker(\mathcal{T})$.
\end{proof}

\begin{lemma}\label{Lem:IrreducibleVariety}
Let $\mathcal{T}$ be a linear hypertree where each edge has rank at least three. Let $U \subseteq V(\mathcal{T})$. Then $\{\phi_{U}f_{\mathcal{T}, v} : v \in V(\mathcal{T})\}$ does not contain any polynomials with exactly one monomial if and only if $\mathcal{V}^{\mathcal{T}}(U)$ is irreducible if and only if $U$ is SZF-closed. 
\end{lemma}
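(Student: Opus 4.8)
The plan is to prove the three conditions equivalent via the cycle (i)$\Leftrightarrow$(iii), (iii)$\Rightarrow$(ii), (ii)$\Rightarrow$(i), where (i) is ``no $\phi_{U} f_{\mathcal{T},v}$ is a single monomial'', (ii) is ``$\mathcal{V}^{\mathcal{T}}(U)$ is irreducible'', and (iii) is ``$U$ is SZF-closed''. The arithmetic of this cycle is light; the only nontrivial input is Lemma \ref{Lem:IisPrime}, whose hypotheses I must arrange.

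First I would establish (i)$\Leftrightarrow$(iii) straight from the definitions. Fix $v \in V(\mathcal{T})$. Since $\mathcal{T}$ is linear, the edges through $v$ pairwise meet only in $v$, so after the substitution $\phi_{U}$ the polynomial $\phi_{U} f_{\mathcal{T},v}$ is a sum of the pairwise-distinct monomials $\prod_{w \in e \setminus \{v\}} x_w$ ranging over exactly those edges $e \ni v$ with $(e \setminus \{v\}) \cap U = \emptyset$, with no cancellation or merging possible. Hence $\phi_{U} f_{\mathcal{T},v}$ is a single monomial precisely when there is exactly one such edge at $v$, which is exactly the condition under which the hypergraph SZF rule of Definition \ref{Def:SZFruleHyper} can be applied at $v$. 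Quantifying over $v$: some $\phi_{U} f_{\mathcal{T},v}$ is a single monomial iff $U$ is not SZF-closed, i.e.\ $\neg$(i)$\Leftrightarrow\neg$(iii).

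Next, for (iii)$\Rightarrow$(ii), the step I expect to require the most care is showing that an SZF-closed $U$ is automatically a vertex cover, which is needed so that Lemma \ref{Lem:IisPrime} applies. If $U$ were not a vertex cover, the edges disjoint from $U$ would form a nonempty sub-hyperforest $\mathcal{T}'$ of $\mathcal{T}$; a leaf edge of a component of $\mathcal{T}'$ supplies a vertex $v$ of degree one in $\mathcal{T}'$, and since $v \notin U$, that unique edge of $\mathcal{T}'$ at $v$ is the only edge $e \ni v$ in all of $\mathcal{T}$ with $(e \setminus \{v\}) \cap U = \emptyset$ --- so the SZF rule applies at $v$, contradicting (iii). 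Thus $U$ is an SZF-closed vertex cover, Lemma \ref{Lem:IisPrime} gives that $L_{\mathcal{T},U}$ is prime, and since $\mathcal{V}^{\mathcal{T}}(U) = \mathcal{V}(L_{\mathcal{T},U})$ it is irreducible.

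Finally, for (ii)$\Rightarrow$(i) I argue the contrapositive. Suppose $g := \phi_{U} f_{\mathcal{T},v} = \prod_{w \in e \setminus \{v\}} x_w$ is a single monomial for some $v$ and some edge $e \ni v$ with $(e \setminus \{v\}) \cap U = \emptyset$; note $|e \setminus \{v\}| \geq 2$ because $\mathcal{T}$ has edges of rank at least three. A direct check gives $L_{\mathcal{T}, U \cup \{w\}} = L_{\mathcal{T},U} + \langle x_w \rangle$ for each $w$, hence $\mathcal{V}^{\mathcal{T}}(U \cup \{w\}) = \mathcal{V}^{\mathcal{T}}(U) \cap \{x_w = 0\}$; and since $g$ vanishes on $\mathcal{V}^{\mathcal{T}}(U)$ we obtain $\mathcal{V}^{\mathcal{T}}(U) = \bigcup_{w \in e \setminus \{v\}} \mathcal{V}^{\mathcal{T}}(U \cup \{w\})$, a union of at least two closed subsets. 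To see this decomposition is nontrivial, for each $w \in e \setminus \{v\}$ take the point with $w$-coordinate $1$ and all other coordinates $0$: it satisfies $x_u = 0$ for $u \in U$ (as $w \notin U$), and every surviving monomial of every $\phi_{U} f_{\mathcal{T},v'}$ has degree at least two, hence vanishes at it, so this point lies in $\mathcal{V}^{\mathcal{T}}(U)$ but not in $\{x_w = 0\}$. Were $\mathcal{V}^{\mathcal{T}}(U)$ irreducible it would equal one of the closed sets $\mathcal{V}^{\mathcal{T}}(U \cup \{w_0\}) \subseteq \{x_{w_0} = 0\}$, contradicted by the point for $w_0$; so $\mathcal{V}^{\mathcal{T}}(U)$ is reducible. (This is the converse of the observation behind Lemma \ref{Lem:NotOneLightEdge}.) Combining (i)$\Leftrightarrow$(iii), (iii)$\Rightarrow$(ii), and (ii)$\Rightarrow$(i) completes the proof.
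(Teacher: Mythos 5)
Your proof is correct, and its overall architecture coincides with the paper's: both directions hinge on the observation that the number of surviving monomials of $\phi_{U}f_{\mathcal{T},v}$ counts the edges at $v$ with no filled vertex outside $v$ (giving (i)$\Leftrightarrow$(iii)), and both obtain irreducibility from Lemma \ref{Lem:IisPrime} after first upgrading ``SZF-closed'' to ``SZF-closed vertex cover.'' The one place you genuinely diverge is that upgrade: the paper proves that an SZF-closed set with no single-monomial link polynomials is a vertex cover by induction on $|E(\mathcal{T})|$, peeling off a leaf edge $\ell$, deleting all edges meeting $U\cap\ell$, and checking the hypothesis descends to the resulting subhyperforest. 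You instead argue directly, in the style of the paper's own Proposition \ref{Prop:vertex cover}: if some edge missed $U$, the nonempty subhyperforest of $U$-disjoint edges would have a pendant vertex at which the SZF rule fires. This is shorter and avoids the bookkeeping of the induction, at the cost of leaning on the (standard, and already paper-assumed) fact that a nonempty linear hyperforest has a pendant vertex. For the direction (ii)$\Rightarrow$(i), the paper simply cites Lemma \ref{Lem:NotOneLightEdge}, whose stated proof is quite terse; your version supplies the missing detail by exhibiting, for each $w\in e\setminus\{v\}$, the coordinate point with a single nonzero entry at $w$ as a witness that each piece $\mathcal{V}^{\mathcal{T}}(U\cup\{w\})$ of the decomposition is proper --- a worthwhile elaboration, since properness of the pieces is exactly what the cited lemma glosses over. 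Both of your substitutions are sound.
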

\begin{proof}
The backward direction of the first equivalence is Lemma \ref{Lem:NotOneLightEdge}, since the number of nonzero monomials appearing in $\phi_{U}f_{\mathcal{T}, v}$ equals $|\{e \in E(\mathcal{T}) : e \cap U = \{v\}\}|$, which also implies the second equivalence in the statement.  This also implies that, if $\{\phi_{U}f_{\mathcal{T}, v} : v \in V(\mathcal{T})\}$ contains no single-monomial polynomials, then $U$ is SZF-closed.  The forward direction follows from Lemma \ref{Lem:IisPrime} once we  show that $U$ is a vertex cover. We proceed by induction on the size of $E(\mathcal{T})$. If $|E(\mathcal{T})| = 1$, then $U$ -- which is nonempty because it is SZF-closed -- is clearly a vertex cover of $\mathcal{T}$. Suppose the result holds for all $1\leq |E(\mathcal{T})| \leq k$, and let $|E(\mathcal{T})| = k+1$. 

Since $\mathcal{T}$ is a hypertree, $\mathcal{T}$ contains a leaf edge $\ell$. Let $v_\ell$ be a pendant vertex of $\ell$, meaning $f_{\mathcal{T}, v_\ell}$ contains one monomial. Thus, $U$ contains at least one element of $\ell$. Let $U_\ell = U \cap \ell$ and $A := \{e\in E(\mathcal{T}): e \cap U_\ell \neq\emptyset\}$. Further, let $\mathcal{T}'$ be the subhyperforest of $\mathcal{T}$ induced by the non-isolated vertices of $(V(\mathcal{T}),E(\mathcal{T}) \setminus A)$. If $v\in V(\mathcal{T}')$, then there exists $e \in E(\mathcal{T})$ so that $v\in e$ and $e\cap U_\ell = \emptyset$, meaning $v \notin U_\ell$. Thus, if $e' \in A$ is incident to $v$, all vertices of $e' \cap U_\ell$ are distinct from $v$. As a result, the monomial of $f_{\mathcal{T}, v}$ given by edge $e'$ does not appear in $\phi_U f_{\mathcal{T}, v}$.
Since no monomials corresponding to edges of $A$ appear in $\phi_U f_{\mathcal{T}, v}$ for any $v \in V(\mathcal{T}')$, $\{\phi_{U}f_{\mathcal{T}, v} : v \in V(\mathcal{T}')\} = \{\phi_{U\setminus U_\ell}f_{\mathcal{T}', v} : v \in V(\mathcal{T}')\}$. Thus, $\{\phi_{ U\setminus U_\ell}f_{\mathcal{T}', v} : v \in V(\mathcal{T}')\}$ does not contain any polynomials with exactly one monomial. Since $|E(\mathcal{T}')| < |E(\mathcal{T})|$, the induction hypothesis gives that $U \setminus U_\ell$ is a vertex cover of $\mathcal{T}'$. Therefore, since $U_\ell$ is a vertex cover of the edges in $A$ and $E(\mathcal{T}) = A \cup E(\mathcal{T}')$, $U$ is a vertex cover of $\mathcal{T}$, completing the proof. 
\end{proof}

\begin{prop}\label{Prop:SZFMonotone}
Let $\mathcal{T}$ be a linear hypertree. If $A, B\subseteq V(\mathcal{T})$ so that $A\subseteq B$ and $B' \subseteq V(\mathcal{T})$ is SZF-derived from $B$, then there exists $A'\subseteq V(\mathcal{T})$ so that $A'$ is SZF-derived from $A$ and $A'\subseteq B'$. 
\end{prop}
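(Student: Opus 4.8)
The plan is to reduce the statement to a single lemma about enlarging a filled set inside a fixed stalled set, after which the hypothesis itself supplies such a set. The first, purely formal observation is that every application of the hypergraph SZF rule only fills vertices, so if $B'$ is SZF-derived from $B$ then $B \subseteq B'$, and $B'$ is stalled (the rule applies nowhere on it). Together with $A \subseteq B$ this gives $A \subseteq B'$ with $B'$ SZF-closed. Hence it suffices to prove the following: if $C \subseteq V(\mathcal{T})$ is SZF-closed and $A \subseteq C$, then some set $A'$ SZF-derived from $A$ satisfies $A \subseteq A' \subseteq C$. Applying this with $C = B'$ immediately yields the proposition, since then $A' \subseteq B'$.

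To prove that lemma I would build a sequence $A = S_0 \subseteq S_1 \subseteq \cdots$, each $S_{i+1}$ obtained from $S_i$ by one SZF-rule application, maintaining the invariant $S_i \subseteq C$, and show the sequence can always be continued inside $C$ until it stalls. Suppose $S_i$ is not stalled, so the rule applies at some vertex $v$: exactly one edge $e$ incident to $v$ has $(e \setminus \{v\}) \cap S_i = \emptyset$. Since $S_i \subseteq C$, any edge $e'$ incident to $v$ with $(e' \setminus \{v\}) \cap C = \emptyset$ also has $(e' \setminus \{v\}) \cap S_i = \emptyset$, so the number of edges at $v$ of the former kind is at most the number of the latter kind, namely at most $1$. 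Because $C$ is stalled this number is not $1$, hence it is $0$: every edge at $v$ meets $C \setminus \{v\}$. In particular $e$ contains some $w \in (e \setminus \{v\}) \cap C$, and $w \notin S_i$ because $e \setminus \{v\}$ is disjoint from $S_i$; applying the rule at $v$ to fill $w$ produces $S_{i+1} = S_i \cup \{w\} \subseteq C$ with $A \subseteq S_{i+1}$. As $|S_i|$ strictly increases and is bounded by $|V(\mathcal{T})|$, the process terminates, and it can only terminate at a stalled set $A'$ with $A \subseteq A' \subseteq C$, which is therefore SZF-derived from $A$.

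The only real content is the counting comparison in the middle step: passing to the larger reference set $C$ can only destroy ``light'' edges at $v$, so combining ``at most one'' with the stalledness of $C$ upgrades it to ``exactly zero'', and this is precisely what lets us name a vertex of the forcing edge $e$ lying in $C$ but not yet filled. Everything else — termination and the reduction through $B \subseteq B'$ — is routine. I would also note that the argument uses neither linearity nor the hypertree structure of $\mathcal{T}$, so it holds verbatim for arbitrary hypergraphs; it is stated for linear hypertrees only because that is the setting in which it is used.
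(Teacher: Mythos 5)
Your proof is correct and follows essentially the same route as the paper's: both hinge on the observation that if the SZF rule applies at a vertex $v$ for a set contained in the stalled set $B'$, then the forcing edge must meet $B'\setminus\{v\}$ (the number of light edges at $v$ with respect to $B'$ is at most the number with respect to the smaller set, hence at most one, and stalledness rules out one), so the newly filled vertex can be chosen inside $B'$. Your direct greedy construction is just a cleaner packaging of the paper's argument, which reaches the same point by taking a maximal partial forcing sequence and deriving a contradiction; your remark that linearity and acyclicity are never used is also accurate.
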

\begin{proof}
Let $A, B, B'\subseteq V(\mathcal{T})$ so that $A\subseteq B$ and $B'$ is SZF-derived from $B$. Let $\{v_i\}_{i=1}^l$, $\{u_i\}_{i=1}^l$ be sequences of vertices so that the SZF rule applied to $v_i$ forces $u_i$ for each $1\leq i\leq l$ when deriving $B'$ from $B$. Furthermore, define $\{v_i'\}_{i=1}^m, \{u_i'\}_{i=1}^m \subseteq V(\mathcal{T})$ to be maximal subsequences of $\{v_i\}$ and $\{u_i\}$ so that the SZF rule can be applied at $v_i'$ to force $u_i'$ starting from set $A$. Let $S = A\cup \{u_i'\}_{i=1}^m$. Since $A \subseteq B$, and $B' = B\cup \{u_i\}_{i=1}^l$, $S \subseteq B'$. If $S$ is SZF-closed, then defining $A' := S$ completes the proof. 

Suppose instead that $S$ is not SZF-closed. Then there exists vertex $w\in V(\mathcal{T})$ so that the SZF rule can be applied at $w$. Thus, $|\{e \ni w : (e\setminus \{w\}) \cap S = \emptyset\}| = 1$. Let $f$ be the unique edge containing $w$ so that $(f \setminus \{w\}) \cap S = \emptyset$. Since $S \subseteq B'$ and $B'$ is SZF-closed, $(f \setminus \{w\}) \cap B' \neq \emptyset$.  Let $z \in (f \setminus \{w\}) \cap B'$, and define $S_1 := S\cup \{z\}$, which is SZF-derived from $S$ by applying the SZF-rule at $w$.  Then $S_1 \subseteq B'$ and $S_1$ is SZF-derived from $A$.  However, this contradicts the maximality of $S$.
\end{proof}

\begin{prop}\label{Prop:GenSetsDerivedFromEmpty}
Let $\mathcal{T}$ be a linear hypertree so that every edge has rank at least three. Let $S\subseteq V(\mathcal{T})$ so that $\mathcal{V}^{\mathcal{T}}(S)$ is an irreducible component of $\ker(\mathcal{T})$ and $S$ is the intersection of all zero loci of elements of $\mathcal{V}^{\mathcal{T}}(S)$. Then $S$ is SZF-derived from $\emptyset$. 
\end{prop}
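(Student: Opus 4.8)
The plan is to extract from the hypothesis that $S$ is a vertex cover which is \emph{minimal} among SZF-closed sets, and then to build an SZF derivation from $\emptyset$ to $S$ by adding one vertex of $S$ at each step. For the reduction: since $\mathcal{V}^{\mathcal{T}}(S)$ is irreducible, Lemma \ref{Lem:IrreducibleVariety} shows $S$ is SZF-closed, and the proof of that lemma shows that such a set is a vertex cover of $\mathcal{T}$. Assume $\mathcal{T}$ has an edge (the edgeless case is trivial, with $S=\emptyset$); then a pendant vertex of a leaf edge can force from $\emptyset$, so $\emptyset$ is not SZF-closed and hence $S\neq\emptyset$. Now suppose $S'\subsetneq S$ is SZF-closed. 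Then $S'$ is also a vertex cover and $\mathcal{V}^{\mathcal{T}}(S')$ is irreducible by Lemma \ref{Lem:IrreducibleVariety}; since $\mathcal{V}^{\mathcal{T}}(U)=\ker(\mathcal{T})\cap\{\mathbf{x}:x_u=0\text{ for }u\in U\}$, enlarging $U$ only shrinks the set, so $\mathcal{V}^{\mathcal{T}}(S')\supseteq\mathcal{V}^{\mathcal{T}}(S)$, and maximality of the irreducible component $\mathcal{V}^{\mathcal{T}}(S)$ forces $\mathcal{V}^{\mathcal{T}}(S')=\mathcal{V}^{\mathcal{T}}(S)$. Picking any $w\in S\setminus S'$, the coordinate function $x_w$ vanishes on $\mathcal{V}^{\mathcal{T}}(S')=\mathcal{V}(L_{\mathcal{T},S'})$, so $x_w\in\sqrt{L_{\mathcal{T},S'}}=L_{\mathcal{T},S'}$, the ideal being prime, hence radical, by Lemma \ref{Lem:IisPrime}. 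But every monomial of every generator $\phi_{S'}f_{\mathcal{T},v}$ has degree at least $2$ (each edge has rank $\geq 3$), so the degree-one part of any element of $L_{\mathcal{T},S'}$ is a $\mathbb{C}$-linear combination of $\{x_u:u\in S'\}$; since $w\notin S'$, $x_w$ cannot belong to $L_{\mathcal{T},S'}$, a contradiction. Hence no proper subset of $S$ is SZF-closed.

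The heart of the argument is the following claim: whenever $A\subsetneq S$, the SZF rule of Definition \ref{Def:SZFruleHyper} can be applied to the coloring $A$ so as to fill some vertex of $S\setminus A$. Since $A$ is a proper subset of $S$, it is not SZF-closed (a proper SZF-closed subset would contradict the minimality just established, and $\emptyset$ is not SZF-closed), so the rule applies at some ``active'' vertex; for an active vertex $v$, write $e_v$ for the unique edge containing $v$ with $(e_v\setminus\{v\})\cap A=\emptyset$. Suppose, toward a contradiction, that $(e_v\setminus\{v\})\cap S=\emptyset$ for every active $v$. Because $S$ is a vertex cover this forces $e_v\cap S=\{v\}$, in particular $v\in S$; since $S$ is SZF-closed, the rule does not apply at $v$ in the coloring $S$, so, as $e_v$ already witnesses $(e_v\setminus\{v\})\cap S=\emptyset$, there is a second edge $e'\ni v$ with $e'\neq e_v$ and $(e'\setminus\{v\})\cap S=\emptyset$; but then $(e'\setminus\{v\})\cap A\subseteq(e'\setminus\{v\})\cap S=\emptyset$ because $A\subseteq S$, contradicting the uniqueness of $e_v$. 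Therefore some active $v$ has a vertex $w\in(e_v\setminus\{v\})\cap S$; this $w$ lies outside $A$, so $w\in S\setminus A$, and applying the SZF rule at $v$ to fill $w$ proves the claim.

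Finally I would iterate the claim: from $A_0=\emptyset\subsetneq S$ one obtains $A_{i+1}=A_i\cup\{w_i\}$ by a single SZF move with $w_i\in S\setminus A_i$, and this is possible so long as $A_i\subsetneq S$; since $A_i\subseteq S$ and $|A_i|=i$ throughout, after $|S|$ steps one reaches $A_{|S|}=S$. Thus $S$ is reachable from $\emptyset$ by SZF rule applications, and, being SZF-closed, it is an SZF-derived set of $\emptyset$. I expect the main obstacle to be the case analysis in the second paragraph: tracking exactly which edge the SZF rule consumes at an active vertex, and exploiting $A\subseteq S$ together with the SZF-closedness of $S$ to contradict the uniqueness of $e_v$. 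The algebraic input in the first paragraph (primality of $L_{\mathcal{T},S'}$ and the Nullstellensatz) is entirely supplied by Lemmas \ref{Lem:IisPrime} and \ref{Lem:IrreducibleVariety}, so once the reduction to ``minimal SZF-closed vertex cover'' is in place, that part should be routine.
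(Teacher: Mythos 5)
Your proof is correct, but it takes a genuinely different route from the paper's. The paper's argument is indirect: it takes a minimum-size $A$ from which $S$ is SZF-derived, and if $A \neq \emptyset$ it invokes the monotonicity statement (Proposition \ref{Prop:SZFMonotone}) to produce an SZF-closed $A_0 \subsetneq S$ derived from $\emptyset$, whose variety $\mathcal{V}^{\mathcal{T}}(A_0)$ is irreducible and strictly contains $\mathcal{V}^{\mathcal{T}}(S)$ (strictness coming from the fact that SZF-closed sets are recovered as intersections of zero loci, via Theorem \ref{Thm:SZFimpliesKernelClosed}), contradicting component-maximality. You instead build the derivation $\emptyset = A_0 \subsetneq A_1 \subsetneq \cdots \subsetneq A_{|S|} = S$ explicitly, never touching Proposition \ref{Prop:SZFMonotone}. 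Your two substitutes both check out: (i) the minimality step --- no proper subset of $S$ is SZF-closed --- uses the same irreducibility/maximality mechanism as the paper but gets the needed strictness from a clean degree-one argument in the prime ideal $L_{\mathcal{T},S'}$ (every generator other than the variables $\mathbf{x}_{S'}$ has all monomials of degree at least two, so $x_w \notin L_{\mathcal{T},S'}$ for $w \notin S'$), which is an independent and arguably more robust proof that distinct SZF-closed sets define distinct varieties; and (ii) the combinatorial step --- at any active vertex $v$ for a coloring $A \subsetneq S$, the forcing edge $e_v$ must meet $S \setminus \{v\}$, since otherwise $v \in S$ and the SZF-closedness of $S$ manufactures a second $S$-empty (hence $A$-empty) edge at $v$, contradicting uniqueness of $e_v$ --- is exactly right. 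What each approach buys: the paper's proof is shorter given its supporting lemma; yours is constructive, is self-contained on the combinatorial side, and delivers as a byproduct the stronger fact that generating sets of irreducible components are \emph{minimal} SZF-closed sets, which the paper only records later in Corollary \ref{Cor:MinimalSkewForcingEmptysetClosures}.
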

\begin{proof}
Let $S$ be as is given in the statement. By Proposition \ref{Prop:vertex cover}, $S$ is a vertex cover of $\mathcal{T}$. Additionally, Lemma \ref{Lem:IrreducibleVariety} gives that $S$ is derived from itself under the skew zero forcing rule, i.e., $S$ is SZF-closed. Let $A\subseteq S$ be a set of minimum size so that $S$ is SZF-derived from $A$. By way of contradiction, suppose $A\neq \emptyset$. Then, since $\emptyset \subseteq A$, Proposition \ref{Prop:SZFMonotone} gives the existence of SZF-closed set $A_0 \subseteq V(\mathcal{T})$ so that $A_0 \subseteq S$ and $A_0$ is SZF-derived from $\emptyset$. Notice that $A\neq \emptyset$ implies $A_0 \subsetneq S$. Since $A_0$ and $S$ are SZF-closed, they are the intersections of zero loci of vectors in $\mathcal{V}^\mathcal{T}(A_0)$ and $\mathcal{V}^\mathcal{T}(S)$ respectively. Thus, $\mathcal{V}^\mathcal{T}(S) \subsetneq \mathcal{V}^\mathcal{T}(A_0)$. Furthermore, since  $A_0$ is SZF-closed, Lemma \ref{Lem:IrreducibleVariety} implies $\mathcal{V}^\mathcal{T}(A_0)$ is an irreducible variety, contradicting that $\mathcal{V}^{\mathcal{T}}(S)$ is an irreducible component of $\mathcal{V}_0(\mathcal{T})$. Therefore, $A = \emptyset$. 
\end{proof}

\begin{cor}\label{Cor:MinimalSkewForcingEmptysetClosures}
Let $\mathcal{T}$ be a linear hypertree where every edge has cardinality at least $3$. Then SZF-closed sets form a poset under the inclusion relation, and minimal elements of this poset are exactly the generators for irreducible components of $\ker(\mathcal{T})$, and they are SZF-derived from $\emptyset$. 
\end{cor}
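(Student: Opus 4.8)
The plan is to assemble this corollary from the machinery already built, since it is essentially a repackaging of Lemma~\ref{Lem:IrreducibleVariety}, Lemma~\ref{Lem:IisPrime}, Theorem~\ref{thm:allcomponents}, and Proposition~\ref{Prop:GenSetsDerivedFromEmpty}. That SZF-closed subsets of $V(\mathcal{T})$ form a poset under $\subseteq$ is immediate, since set inclusion is a partial order on any family of sets; the real content is the identification of the minimal elements. The key device I would introduce is the assignment $U \mapsto \mathcal{V}^{\mathcal{T}}(U)$ restricted to SZF-closed sets $U$: I would show that this is an order-reversing injection from the poset of SZF-closed subsets of $V(\mathcal{T})$ into the poset of irreducible subvarieties of $\ker(\mathcal{T})$, and that on its image the inverse sends a subvariety to its generating set (the intersection of the zero loci of its points).

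To prove this, fix an SZF-closed $U \subseteq V(\mathcal{T})$. By Lemma~\ref{Lem:IrreducibleVariety} (including the vertex-cover conclusion established inside its proof), $U$ is a vertex cover and $\mathcal{V}^{\mathcal{T}}(U)$ is irreducible, so $L_{\mathcal{T},U}$ is a prime, hence radical, ideal by Lemma~\ref{Lem:IisPrime}. The one genuinely new point is that the generating set of $\mathcal{V}^{\mathcal{T}}(U)$ is exactly $U$: one inclusion is clear because every nullvector in $\mathcal{V}^{\mathcal{T}}(U)$ vanishes on $U$; for the other, given $w \notin U$ I would argue $x_w \notin L_{\mathcal{T},U}$, so, $L_{\mathcal{T},U}$ being radical, $x_w$ does not vanish identically on $\mathcal{V}^{\mathcal{T}}(U)$ and some point of $\mathcal{V}^{\mathcal{T}}(U)$ has nonzero $w$-coordinate. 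The claim $x_w \notin L_{\mathcal{T},U}$ follows from a degree count: the generators of $L_{\mathcal{T},U}$ are the variables $\{x_u : u \in U\}$ together with the polynomials $\phi_{U} f_{\mathcal{T},v}$, and since every edge has rank at least three, every monomial of every $\phi_{U} f_{\mathcal{T},v}$ has degree at least two (the hypergraph here need not be uniform, so $\phi_{U} f_{\mathcal{T},v}$ need not be homogeneous, but only the degree bound on monomials matters); hence the degree-one part of any element of $L_{\mathcal{T},U}$ is a $\mathbb{C}$-linear combination of $\{x_u : u \in U\}$, which cannot equal $x_w$. Order-reversal of $U \mapsto \mathcal{V}^{\mathcal{T}}(U)$ is clear (larger $U$ means more vanishing constraints), injectivity holds because $U$ is recovered as the generating set of $\mathcal{V}^{\mathcal{T}}(U)$, and the same recovery shows the correspondence reflects containment, so it is an order-anti-isomorphism onto its image.

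Given this correspondence, I would finish as follows. Minimal elements of the poset of SZF-closed sets correspond to maximal elements of the image $\{\mathcal{V}^{\mathcal{T}}(U) : U \text{ SZF-closed}\}$. Every member of this image is an irreducible subvariety of $\ker(\mathcal{T})$, and by Theorem~\ref{thm:allcomponents} every irreducible component of $\ker(\mathcal{T})$ already lies in the image; since an irreducible component is maximal among all irreducible subvarieties of $\ker(\mathcal{T})$ and any irreducible subvariety is contained in some component, the maximal members of the image are precisely the irreducible components. Chasing the inverse correspondence, the SZF-closed set producing a given component is its generating set, so the minimal SZF-closed sets are exactly the generating sets of the irreducible components. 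Finally, applying Proposition~\ref{Prop:GenSetsDerivedFromEmpty} to each such generating set shows it is SZF-derived from $\emptyset$, which completes the proof.

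The main obstacle is the middle step, namely showing the generating set of $\mathcal{V}^{\mathcal{T}}(U)$ equals $U$ for SZF-closed $U$; this is the only place new work is needed, and the degree argument via primality of $L_{\mathcal{T},U}$ is what makes it go through, provided one is careful that the linear generators of $L_{\mathcal{T},U}$ are exactly the $U$-variables and that no link polynomial $\phi_{U} f_{\mathcal{T},v}$ contributes anything in degree one, which is precisely where the rank-at-least-three hypothesis enters. Everything else is bookkeeping with containments of varieties and appeals to the already-established results.
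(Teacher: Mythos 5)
Your proof is correct, and the overall assembly --- set inclusion gives the poset structure for free, Lemma \ref{Lem:IrreducibleVariety} and Lemma \ref{Lem:IisPrime} make $U \mapsto \mathcal{V}^{\mathcal{T}}(U)$ an order-reversing injection on SZF-closed sets, Theorem \ref{thm:allcomponents} identifies the maximal elements of its image with the irreducible components, and Proposition \ref{Prop:GenSetsDerivedFromEmpty} supplies the derivation from $\emptyset$ --- is the route the paper intends, since it states the corollary with no separate proof. Where you genuinely diverge is in how you justify the one nontrivial sub-step: that an SZF-closed $U$ is recovered as the generating set of $\mathcal{V}^{\mathcal{T}}(U)$, i.e., that for $w \notin U$ some point of $\mathcal{V}^{\mathcal{T}}(U)$ has nonzero $w$-coordinate. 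You get this algebraically: since every edge has rank at least three, every monomial of every $\phi_{U} f_{\mathcal{T},v}$ has degree at least two, so the degree-one part of any element of $L_{\mathcal{T},U}$ lies in the span of $\{x_u : u \in U\}$, whence $x_w \notin L_{\mathcal{T},U}$; primality of $L_{\mathcal{T},U}$ from Lemma \ref{Lem:IisPrime} makes the ideal radical, and the Nullstellensatz finishes. The paper instead leans --- tacitly, inside the proof of Proposition \ref{Prop:GenSetsDerivedFromEmpty}, where it asserts that SZF-closed sets ``are the intersections of zero loci'' of their varieties --- on the constructive Theorem \ref{Thm:SZFimpliesKernelClosed}, which exhibits a single nullvector whose zero locus is exactly $U$ but which is only proved afterwards. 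Your argument buys a self-contained justification with no forward reference and no explicit vector construction; the paper's buys the stronger pointwise fact that $U$ is the zero locus of one nullvector rather than only the intersection over all of $\mathcal{V}^{\mathcal{T}}(U)$. Either way the corollary follows, and your write-up makes explicit a step the paper leaves to the reader.
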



The above Corollary indicates the origin of the term ``generating set'' to refer to minimal zero loci of nullvectors. Now we turn our attention to kernel-closed sets in linear hypertrees and their relation to SZF-closed sets. The following result mirrors that of Proposition \ref{Prop:SkewZeroClosedGeneratenullvectors} for trees, in that we show that SZF-closed set are the zero loci of individual nullvectors. 

\begin{theorem}\label{Thm:SZFimpliesKernelClosed}
Let $\mathcal{T}$ be a linear hypertree. 
If $S \subseteq V(\mathcal{T})$ is SZF-closed, then $S$ is kernel-closed. 
\end{theorem}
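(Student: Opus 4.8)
The plan is to imitate the construction in the proof of Proposition~\ref{Prop:SkewZeroClosedGeneratenullvectors}, producing a single nullvector $\mathbf{x}$ of $\mathcal{T}$ whose zero locus is exactly $S$, with the neighbour-sums there replaced by the link polynomials $f_{\mathcal{T},v}$. We may assume $\mathcal{T}$ has at least one edge (otherwise it is a lone vertex and both of its subsets are zero loci of the scalars $0$ and $1$). Since $\mathcal{T}$ is a connected linear hypertree, its vertex--edge incidence bipartite graph $B$ is a tree; root $B$ at a pendant vertex $v_0$ of $\mathcal{T}$. Then every vertex $v$ acquires an even depth $d(v)$, every edge an odd depth, every non-root vertex a unique parent edge, and every edge a unique root-side endpoint (the one nearer $v_0$); moreover, because $B$ is a tree, if $q$ is the root-side endpoint of an edge $e$ then each vertex of $e \setminus \{q\}$ has depth $d(q)+2$, and these vertex sets are pairwise disjoint as $e$ ranges over the child edges of all depth-$d(q)$ vertices.

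Set $x_v := 0$ for every $v \in S$, and $x_{v_0}$ equal to some fixed nonzero scalar if $v_0 \notin S$. Now process vertices in nondecreasing order of depth. When a vertex $q$ is reached, every coordinate of depth $\leq d(q)$ has already been assigned, so in particular the monomial $c_q := \prod_{w \in p(q) \setminus \{q\}} x_w$ of $f_{\mathcal{T},q}$ coming from the parent edge $p(q)$ (with $c_{v_0} := 0$) is determined; the remaining monomials of $f_{\mathcal{T},q}$ come from the child edges $e^{(1)}, \dots, e^{(s)}$ of $q$, equal $t_j := \prod_{w \in e^{(j)} \setminus \{q\}} x_w$, and are supported on depth-$(d(q)+2)$ coordinates no earlier step has touched. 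We assign those coordinates so that $f_{\mathcal{T},q}(\mathbf{x}) = c_q + \sum_j t_j = 0$: whenever $e^{(j)} \setminus \{q\}$ meets $S$ we put $0$ on its $S$-coordinates and $1$ on the rest, forcing $t_j = 0$; otherwise we are free to realise $t_j$ as any prescribed nonzero value (one coordinate equal to the target, the rest equal to $1$). An easy induction then shows each coordinate is assigned exactly once and each non-$S$ coordinate is nonzero, so $Z(\mathbf{x}) = S$ as soon as every equation $f_{\mathcal{T},q}(\mathbf{x}) = 0$ can actually be met.

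That last point is where SZF-closedness enters. Write $J_1(q)$ for the set of child edges $e^{(j)}$ with $(e^{(j)} \setminus \{q\}) \cap S = \emptyset$, so the equation at $q$ is $c_q + \sum_{j \in J_1(q)} t_j = 0$ with each $t_j$ ranging freely over nonzero scalars. The crucial remark is that $c_q \neq 0$ exactly when the parent edge $p(q)$ contains no filled vertex other than $q$; consequently the number of edges incident to $q$ having no filled vertex besides $q$ --- the quantity governing the SZF rule at $q$ --- equals $|J_1(q)|$ when $c_q = 0$ and $|J_1(q)| + 1$ when $c_q \neq 0$, and since $S$ is SZF-closed it is never equal to $1$. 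Hence, if $c_q \neq 0$ then $|J_1(q)| \geq 1$ and we may solve $\sum t_j = -c_q \neq 0$ in nonzero scalars; if $c_q = 0$ then $|J_1(q)| \in \{0\} \cup \{2,3,\dots\}$, so either $J_1(q) = \emptyset$ and the empty sum already equals $-c_q = 0$ (this is the case at leaf vertices and at $v_0$), or $|J_1(q)| \geq 2$ and we may choose nonzero $t_j$ summing to $0$. In every case the equation at $q$ is solvable, completing the construction.

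The genuine obstacle is not any single computation but keeping the recursion honest: it must be run along the incidence tree rather than along $\mathcal{T}$, with each edge's coordinates assigned ``from the side of'' its root-side endpoint only --- for, unlike in a graph, one edge feeds a monomial into the equation at each of its vertices --- and it must be ordered so that $c_q$ is already determined when $q$ is processed. Given that framework, the real insight is to recognise the dichotomy $c_q = 0$ versus $c_q \neq 0$ as precisely the alternative adjudicated by the SZF rule at $q$, which is what makes SZF-closedness hand over exactly the slack needed at every vertex, generalising the ``exactly one unfilled neighbour'' trichotomy of the tree case. (Note that this argument never assumed $S$ to be a vertex cover; that it must be one then follows from Proposition~\ref{Prop:vertex cover}.)
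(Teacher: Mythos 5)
Your proposal is correct and follows essentially the same route as the paper's proof: root at a pendant vertex, assign coordinates in order of distance from the root, and use SZF-closedness to rule out exactly one ``free'' incident edge at each vertex so that every link equation $f_{\mathcal{T},q}(\mathbf{x})=0$ can be satisfied with nonzero values off $S$. Your $c_q=0$ versus $c_q\neq 0$ dichotomy is just a cleaner packaging of the paper's Case 1/Case 2 split on whether the parent edge is among the edges missing $S$.
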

\begin{proof}
Let $S \subseteq V(\mathcal{T})$ be SZF-closed. Consider rooting $\mathcal{T}$ at a pendant vertex $w$ of leaf edge $\ell$. We construct a nullvector $\mathbf{x}$ with entries $x_v$ for $v \in V(\mathcal{T})$ by iteratively working through the hypertree $\mathcal{T}$. Start by assigning zeros to coordinates corresponding to vertices of $S$, i.e., let $x_v = 0$ for each $v \in S$. Now we choose values for all nonzero coordinates of $\mathbf{x}$. 

Let $x_v = 1$ for every pendant vertex $v \in \ell \setminus S$. Since $S$ is SZF-closed, $\phi_S f_{\mathcal{T}, v} = 0$ for every pendant vertex $v \in \ell$. If $\mathcal{T}$ is a single edge, we have completed the proof. Otherwise, let $z \in \ell$ be the unique vertex satisfying $\deg(z) > 1$. If $z \in S$, then we already know $x_{z} = 0$. If not, let $x_{z} = 1$. Therefore, we have determined the entries of $\mathbf{x}$ for every vertex with distance at most one from $w$. 

Let $v \in V(\mathcal{T})$ be distance $h \geq 1$ from $w$, and if $u \in V(\mathcal{T})$ satisfies $\dist(w, u) \leq h$, then $x_u$ has already been assigned. Furthermore, assume that if $u \in V(\mathcal{T})$ satisfies $\dist(u, w) < h$, then $f_{\mathcal{T}, u}(\mathbf{x}) = 0$. 

If every edge incident to $v$ contains a vertex of $S\setminus \{v\}$, then define $x_y = 1$ for every $y \in V(\mathcal{T}) \setminus S$ satisfying $\dist(w, y) = h+1$ and $y$ is adjacent to $v$. In this case, $\phi_S f_{\mathcal{T}, v} = 0$, so $f_{\mathcal{T}, u}(\mathbf{x}) = 0$ holds. If this is not the case, let edges $\{e_i\}_{i=1}^m$ contain no vertices of $S\setminus \{v\}$ for some $m\geq 2$. The bound on $m$ comes from the assumption that $S$ is SZF-closed. We split into two cases. 

Case $1$: There exists $1\leq j\leq m$ so that $e_j$ contains vertices at distance $h-1$ from $w$. Then all entries of $\mathbf{x}$ corresponding to vertices of $e_j$ have already been assigned. Define $c := \prod_{u \in e_j \setminus \{v\}} x_u$, and note that by assumption, $c \neq 0$. Choose $u_i \in e_i\setminus \{v\}$ for each $1\leq i\leq m$ so that $i\neq j$. Then define $x_{u_i} = -c / (m-1)$ and if $u \in \bigcup_{i\neq j} (e_i \setminus \{v, u_i\})$, then let $x_u = 1$. Lastly, if $u$ is a neighbor of $v$ so that $x_u$ has not been assigned (these are vertices outside $S$ and outside $\{e_i\}$), let $x_u = 1$. It is straightforward to see that $f_{\mathcal{T}, v}(\mathbf{x}) = 0$. 

Case $2$: There does not exist $1\leq j\leq m$ so that $e_j$ contains vertices at distance $h-1$ from $w$. For each $u \in e_1 \setminus \{v\}$, define $x_u = 1$. Choose $u_i \in e_i\setminus \{v\}$ for each $2\leq i\leq m$. Then define $x_{u_i} = -1 / (m-1)$ and if $u \in \bigcup_{i\geq 2} (e_i \setminus \{v, u_i\})$, then let $x_u = 1$. Lastly, if $u$ is a neighbor of $v$ so that $x_u$ has not been assigned (these are vertices outside $S$ and outside $\{e_i\}$), let $x_u = 1$. Again we have that $f_{\mathcal{T}, v}(\mathbf{x}) = 0$. 
\end{proof}

The last result of this section provides a hypergraph analogue to the statement (Proposition \ref{Prop:SkewZeroClosedGeneratenullvectors}) that trees are SZF-complete.

\begin{cor}
For linear hypertree $\mathcal{T}$ with every edge having rank at least three, a set $S\subseteq V(\mathcal{T})$ is SZF-closed if and only if it is kernel-closed. 
\end{cor}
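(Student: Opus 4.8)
The plan is to prove the two implications separately; each is short given the tools already assembled in this section.

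The forward direction — SZF-closed implies kernel-closed — is exactly Theorem \ref{Thm:SZFimpliesKernelClosed}, which is proved for every linear hypertree and hence in particular under the hypothesis that all edges have rank at least three. So for that half I would simply cite it; no new argument is needed.

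For the converse, suppose $S = Z(\mathbf{x})$ for some nullvector $\mathbf{x}$ of $\mathcal{T}$, and suppose toward a contradiction that $S$ is not SZF-closed. Then the skew zero forcing rule of Definition \ref{Def:SZFruleHyper} can be applied at some vertex $v$: there is a unique edge $e \ni v$ with $(e \setminus \{v\}) \cap S = \emptyset$. Since $f_{\mathcal{T},v} = \sum_{e' \ni v} \prod_{w \in e' \setminus \{v\}} x_w$ and $\mathbf{x}$ is zero exactly on $S$, every monomial indexed by an edge $e' \ni v$ with $(e' \setminus \{v\}) \cap S \neq \emptyset$ vanishes at $\mathbf{x}$, so $f_{\mathcal{T},v}(\mathbf{x}) = (\phi_S f_{\mathcal{T},v})(\mathbf{x}) = \prod_{w \in e \setminus \{v\}} x_w$. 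This product is nonzero because no vertex of $e \setminus \{v\}$ lies in $S = Z(\mathbf{x})$ (and rank $\geq 3$ is not even needed here: one nonzero factor suffices). But $\mathbf{x}$ being a nullvector forces $f_{\mathcal{T},v}(\mathbf{x}) = 0$, a contradiction, so $S$ is SZF-closed.

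I do not expect a genuine obstacle: the corollary is essentially Theorem \ref{Thm:SZFimpliesKernelClosed} combined with the elementary observation above, which is the exact hypergraph analogue of the remark following Proposition \ref{Prop:SkewZeroClosedGeneratenullvectors} — namely that the zero locus of a nullvector is always SZF-closed. The only point needing minor care is the bookkeeping identity $f_{\mathcal{T},v}(\mathbf{x}) = (\phi_S f_{\mathcal{T},v})(\mathbf{x})$ for $\mathbf{x}$ vanishing on $S$, which is immediate from the definition of $\phi_S$. (One could alternatively route the converse through Proposition \ref{Prop:vertex cover} and Lemma \ref{Lem:IrreducibleVariety} — a nullvector's zero locus is a vertex cover, and if its SZF rule failed to stall then some $\phi_S f_{\mathcal{T},v}$ would be a single monomial — but the direct evaluation argument is cleaner and does not even invoke the rank-$\geq 3$ hypothesis.)
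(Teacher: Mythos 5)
Your proposal is correct and follows essentially the same route as the paper: the forward direction is a citation of Theorem \ref{Thm:SZFimpliesKernelClosed}, and the converse is the same one-line contradiction, namely that if the SZF rule could be applied at $v$ then $\phi_S f_{\mathcal{T},v}$ would be a single monomial evaluating to a nonzero value at the nullvector $\mathbf{x}$. Your side remark that the rank-$\geq 3$ hypothesis is not needed for this direction is accurate and consistent with the paper's argument.
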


\begin{proof}
    The only content of the statement beyond Theorem \ref{Thm:SZFimpliesKernelClosed} is that each  kernel-closed set $U = Z(\mathbf{x})$ is SZF-closed.  Suppose not; then $|\{e \in E(\mathcal{T}) : e \cap U = \{v\}\}| = 1$ for some $v \in V(\mathcal{T})$, so $\phi_{U} f_{\mathcal{T}, v}$ is one monomial which evaluates to a nonzero value at $\mathbf{x}$, contradicting that $\mathbf{x}$ is a nullvector.
\end{proof}

\subsection{Complete Hypergraph Nullvariety}

In this section, we investigate the nullvariety and SZF-closed sets of complete hypergraphs. We let $\mathcal{K}_n^{(k)}$ denote the $k$-uniform complete hypergraph on $n$ vertices. Let $\mathcal{X}$ be a collection of variables. Then define $e_k(\mathcal{X})$ to be the $k$th elementary symmetric polynomial in the variables of $\mathcal{X}$, i.e., if $\mathcal{X} = \{x_i\}_{i=1}^m$, then 
$$
e_k(\mathcal{X}) = \sum_{s \in \binom{[m]}{k}} \prod_{i \in s} x_i.
$$
One important property of the complete hypergraph $\mathcal{K}_n^{(k)} = ([n],\binom{[n]}{k})$, is the following equality, where we assume $v \in [n]$. 
$$
f_{\mathcal{K}_n^{(k)}, v} = \frac{\partial}{\partial x_v} e_k\left(\{x_u\}_{u\in [n]}\right)
$$
We exploit this in the following result. The proof is a formalization of a sketch given by \cite{Mec2017}. 

\begin{theorem}\label{Thm:CompleteHypergraphIrrComps}
If $n \geq k\geq 2$, let $\mathcal{G} = \mathcal{K}_n^{(k)}$. Then the collection of irreducible components of $\ker(\mathcal{G})$ is given by 
$$
\left\{ \mathcal{V}^{\mathcal{G}}(S) : S \in \binom{[n]}{n - k + 2} \right\}_.
$$
\end{theorem}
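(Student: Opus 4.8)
The plan is to reduce the whole statement to one combinatorial fact: every nullvector of $\mathcal{K}_n^{(k)}$ has support of size at most $k-2$. First I would record the identity $f_{\mathcal{K}_n^{(k)},v}=\partial_{x_v}e_k(\mathbf{x})=e_{k-1}(\{x_u:u\neq v\})$ already noted before the theorem, so that $\ker(\mathcal{G})$ is the common vanishing locus of the $(k-1)$-st elementary symmetric polynomials in the $n-1$ variables other than $x_v$, for each $v$. For $S\in\binom{[n]}{n-k+2}$, the complement $\overline{S}$ has size $k-2$; evaluating the $S$-coordinates at $0$ annihilates every monomial of each $f_{\mathcal{G},v}$ (each such monomial is a product of $k-1$ distinct vertex variables, but at most $k-2$ of them survive), so $\mathcal{V}^{\mathcal{G}}(S)$ is exactly the coordinate subspace $\{\mathbf{x}:x_v=0\text{ for all }v\in S\}$, which is irreducible of dimension $k-2$ and is contained in $\ker(\mathcal{G})=\mathcal{V}^{\mathcal{G}}(\emptyset)$. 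Conversely, granting the support bound, each $\mathbf{x}\in\ker(\mathcal{G})$ has at least $n-k+2$ zero coordinates, so choosing any $S\subseteq Z(\mathbf{x})$ with $|S|=n-k+2$ puts $\mathbf{x}\in\mathcal{V}^{\mathcal{G}}(S)$. Hence $\ker(\mathcal{G})=\bigcup_{S\in\binom{[n]}{n-k+2}}\mathcal{V}^{\mathcal{G}}(S)$.

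To conclude that these are precisely the irreducible components, I would check incomparability: testing standard basis vectors $\mathbf{e}_v$ for $v\in\overline{S}$ shows $\mathcal{V}^{\mathcal{G}}(S)\subseteq\mathcal{V}^{\mathcal{G}}(S')$ iff $\overline{S}\subseteq\overline{S'}$, and since all the complements have the same cardinality $k-2$, distinct members of the family are pairwise incomparable. A finite union of irreducible closed sets, no one of which contains another, is exactly the irreducible-component decomposition, which gives the stated list.

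The heart of the argument — and the step I expect to be the main obstacle — is the support bound, which I would prove by contradiction. Suppose $\mathbf{x}\in\ker(\mathcal{G})$ has support $T$ with $m:=|T|\geq k-1$, and let $y=\mathbf{x}|_T$, all entries nonzero. Write $a_j=e_j(y)$ and $b^v_j=e_j(y\setminus y_v)$ for $v\in T$. The nullvector equations at vertices of $T$ give $b^v_{k-1}=0$ for all $v\in T$, and the equation at any vertex outside $T$ gives $a_{k-1}=0$; if $T=[n]$, one instead gets $a_{k-1}=0$ from the averaging identity $\sum_{v\in T}b^v_j=(m-j)\,a_j$ together with $m=n\geq k$. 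I would then run a downward induction on $j$ from $k-1$ to $1$, using the deletion identity $a_j=b^v_j+y_v\,b^v_{j-1}$ (which, since $y_v\neq 0$, propagates $a_j=b^v_j=0$ to $b^v_{j-1}=0$) and then the averaging identity $\sum_{v\in T}b^v_{j-1}=(m-j+1)\,a_{j-1}$ (which, since $m\geq k-1\geq j$ forces the coefficient $m-j+1$ to be nonzero, hence $a_{j-1}=0$). Reaching $j=1$ yields $a_1=b^v_1=0$, and then $a_1=b^v_1+y_v\,b^v_0=0+y_v$ forces $y_v=0$, contradicting $v\in T$. This handles every $k\geq 2$ uniformly (for $k=2$ the induction is vacuous and the contradiction is immediate). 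The delicate bookkeeping is exactly verifying that $m-j+1$ never vanishes over the relevant range of $j$, which is precisely where the hypothesis $m\geq k-1$ is needed, together with confirming that the $T=[n]$ case still yields $a_{k-1}=0$ from $n\geq k$.
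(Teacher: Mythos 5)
Your proposal is correct, and the overall skeleton (each $\mathcal{V}^{\mathcal{G}}(S)$ is a coordinate subspace contained in $\ker(\mathcal{G})$; everything reduces to showing every nullvector has at most $k-2$ nonzero coordinates) matches the paper. But your proof of the key support bound is genuinely different. The paper fixes $n$ and inducts on the uniformity $k$: it writes $\partial_{x_u}e_{k+1}=e_k-x_u\,\partial_{x_u}e_k$, invokes Euler's identity $e_k=\tfrac{1}{k}\sum_u x_u\,\partial_{x_u}e_k$ to conclude $e_k(\mathbf{y}')=0$ and hence $x_u\,\partial_{x_u}e_k(\mathbf{y}')=0$ for every $u$, and then observes that the restriction of $\mathbf{y}'$ to its support would be an all-nonzero nullvector of $\mathcal{K}_{n-q}^{(k)}$, contradicting the inductive hypothesis unless the support has size at most $k-1$. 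Your argument instead fixes the support $T$ and runs a downward induction on the degree $j$ of the elementary symmetric functions $a_j=e_j(y)$, $b_j^v=e_j(y\setminus y_v)$, using the deletion identity $a_j=b_j^v+y_v b_{j-1}^v$ and the averaging identity $\sum_{v}b_{j}^v=(m-j)a_j$; I checked that the coefficient $m-j+1$ is indeed positive throughout the range $1\le j\le k-1$ given $m\ge k-1$, that the case $T=[n]$ is correctly covered by the averaging identity with $n\ge k$, and that the terminal step $a_1=y_v$ delivers the contradiction. Your route is a single self-contained induction that avoids the paper's doubly-indexed hypothesis (over $k$ and all intermediate $n'$) and makes the role of the nonvanishing coefficients completely explicit; the paper's route is shorter on the page because it outsources the combinatorics to Euler's identity and to the recursive structure of complete hypergraphs. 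You are also more careful than the paper about the last step: you verify pairwise incomparability of the $\mathcal{V}^{\mathcal{G}}(S)$ (noting the trivial $k=2$ case where each is $\{\mathbf{0}\}$) before invoking the uniqueness of the irredundant decomposition into irreducible closed sets, a point the paper leaves implicit.
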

\begin{proof}
Let $\mathbf{y} \in \ker(\mathcal{G})$, and let $\mathcal{X} = \{x_u\}_{u\in [n]}$. Since $f_{\mathcal{G}, u}(\mathbf{y}) = 0$ for all $u \in [n]$, we are interested in the simultaneous vanishing of the following set. 
$$
\left\{ \frac{\partial}{\partial x_u} e_k(\mathcal{X})\right\}_{u \in U}
$$
Note that for every $S \in \binom{[n]}{n - k + 2}$, we have $\mathcal{V}^{\mathcal{G}}(S)\subseteq \ker(\mathcal{G})$, as for any $\mathbf{z} \in \mathcal{V}(S)$, every monomial of $\frac{\partial}{\partial x_u} e_k(\mathcal{X})$ evaluates to zero by the pigeonhole principle. Thus, it suffices to show that at least $n-k+2$ coordinates of $\mathbf{y}$ are zero. We prove the result by induction on $k$ for fixed $n$. Let $n\geq 2$ be given. If $k = 2$, then $\frac{\partial}{\partial x_i} e_k(\mathcal{X}) = \left( \sum_{j\in [n]} x_j \right) - x_i$. Thus, if $n = 2$, then $y_1 = y_2 = 0$. Otherwise, if $n > 2$, then $0 = (\frac{\partial}{\partial x_i} e_k(\mathcal{X}))(\mathbf{y}) - (\frac{\partial}{\partial x_j} e_k(\mathcal{X}))(\mathbf{y}) = y_j - y_i$ for each distinct pair $i, j\in [n]$. Therefore, $\mathbf{y}$ is a constant vector, so $\mathbf{y} = \mathbf{0}$, giving the desired result in this case. 

Now suppose the result holds for some $k\geq 2$ so that $k < n$, and for all $n'$ so that $k\leq n'\leq n$. Now we consider $\mathcal{G}':=\mathcal{K}_n^{(k+1)}$. Let $\mathbf{y}' \in \ker(\mathcal{G}')$. 

Take note of the following equality: 
$$
\frac{\partial}{\partial x_u} e_{k+1}(\mathcal{X}) = e_{k}(\mathcal{X}) - x_u \cdot \frac{\partial}{\partial x_u} e_{k}(\mathcal{X})
$$

Notice also that 
\begin{equation}\label{Eq:SymPolyId}
e_{k}(\mathcal{X}) = \frac{1}{k} \sum_{u \in [n]} x_u \frac{\partial}{\partial x_u} e_{k}(\mathcal{X}).
\end{equation}
Thus, $\frac{\partial}{\partial x_u} e_{k+1}(\mathcal{X}) = (e_{k}(\mathcal{X}))(\mathbf{y}') - (x_u \cdot \frac{\partial}{\partial x_u} e_{k}(\mathcal{X}))(\mathbf{y}') = 0$ for all $u \in [n]$ implies that every $(x_u \cdot \frac{\partial}{\partial x_u} e_{k}(\mathcal{X}))(\mathbf{y}') = 0$, as otherwise contradicts (\ref{Eq:SymPolyId}). We seek to show that at least $n-(k+1)+2$ coordinates of $\mathbf{y}'$ are zero. Suppose that $q \in \mathbb{N}$ coordinates of $\mathbf{y}'$ are zero and the remaining $n - q$ are nonzero. For the $n-q$ coordinates $u$ which are nonzero, $(x_u \cdot \frac{\partial}{\partial x_u} e_{k}(\mathcal{X}))(\mathbf{y}') = 0$ implies $( \frac{\partial}{\partial x_u} e_{k}(\mathcal{X}))(\mathbf{y}') = 0$. Therefore, if $n - q \geq k$, this contradicts the induction hypothesis for $n' = n-q$. Thus, $n-q \leq k-1$, giving $q \geq n-k+1 = n - (k+1) + 2$, completing the proof. 
\end{proof}

\begin{cor}\label{Cor:CompleteHypergraphKernelClosed}
If $n\geq k\geq 2$, then $S \subseteq V(\mathcal{K}_n^{(k)})$ is kernel-closed if and only if $|S| \geq n-k+2$. 
\end{cor}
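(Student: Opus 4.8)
The plan is to treat the two directions separately, leaning on Theorem~\ref{Thm:CompleteHypergraphIrrComps} together with a pigeonhole observation: an edge spans $k$ vertices, and $k$ vertices cannot be contained in a set of size at most $k-2$.

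For the ``if'' direction, suppose $|S| \geq n-k+2$, equivalently $|V(\mathcal{K}_n^{(k)}) \setminus S| \leq k-2$. I would exhibit an explicit nullvector whose zero locus is exactly $S$. Let $\mathbf{x}$ be any vector with $x_v = 0$ for $v \in S$ and $x_v \neq 0$ for $v \notin S$. For each $v \in [n]$, every monomial of $f_{\mathcal{K}_n^{(k)},v}$ has the form $\prod_{w \in e \setminus \{v\}} x_w$ for some edge $e \ni v$, a product over the $k-1$ vertices of $e \setminus \{v\}$; since $|e \setminus \{v\}| = k-1 > k-2 \geq |V(\mathcal{K}_n^{(k)}) \setminus S|$, the set $e \setminus \{v\}$ cannot be contained in $V(\mathcal{K}_n^{(k)}) \setminus S$, so it meets $S$ and the monomial vanishes at $\mathbf{x}$. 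Hence $f_{\mathcal{K}_n^{(k)},v}(\mathbf{x}) = 0$ for every $v$, so $\mathbf{x} \in \ker(\mathcal{K}_n^{(k)})$ and $Z(\mathbf{x}) = S$. (Equivalently, this computation shows that $\mathcal{V}^{\mathcal{K}_n^{(k)}}(S)$ is the entire coordinate subspace vanishing on $S$ once $|S| \geq n-k+2$, so the nonzero coordinates may be chosen freely.)

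For the ``only if'' direction, suppose $S = Z(\mathbf{y})$ for some $\mathbf{y} \in \ker(\mathcal{K}_n^{(k)})$. By Theorem~\ref{Thm:CompleteHypergraphIrrComps}, $\ker(\mathcal{K}_n^{(k)}) = \bigcup_{S_0 \in \binom{[n]}{n-k+2}} \mathcal{V}^{\mathcal{K}_n^{(k)}}(S_0)$, so $\mathbf{y} \in \mathcal{V}^{\mathcal{K}_n^{(k)}}(S_0)$ for some $(n-k+2)$-subset $S_0 \subseteq [n]$. By the definition of $\mathcal{V}^{\mathcal{K}_n^{(k)}}(S_0)$, every coordinate of $\mathbf{y}$ indexed by $S_0$ is zero, hence $S_0 \subseteq Z(\mathbf{y}) = S$ and $|S| \geq n-k+2$. (Alternatively, one can invoke directly the inductive argument inside the proof of Theorem~\ref{Thm:CompleteHypergraphIrrComps}, which establishes that every nullvector of $\mathcal{K}_n^{(k)}$ has at least $n-k+2$ zero coordinates.)

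I do not anticipate a genuine obstacle: both directions reduce immediately to Theorem~\ref{Thm:CompleteHypergraphIrrComps} and the elementary counting fact above. The only point requiring a moment's care is that, in the ``if'' direction, the zero locus must equal $S$ rather than merely contain it, which is exactly why one checks that no additional polynomial constraints are imposed on the coordinates outside $S$ when $|S| \geq n-k+2$.
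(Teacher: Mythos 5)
Your proof is correct and follows essentially the same route as the paper's: the lower bound on $|S|$ comes from Theorem \ref{Thm:CompleteHypergraphIrrComps} (every nullvector lies in some $\mathcal{V}^{\mathcal{K}_n^{(k)}}(S_0)$ with $|S_0|=n-k+2$), and the converse is the same pigeonhole observation that at most $k-2$ vertices lie outside $S$, so every monomial of every link polynomial picks up a zero factor. You are slightly more explicit than the paper in checking that the zero locus is exactly $S$ rather than a superset, which is a worthwhile detail but not a different argument.
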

\begin{proof}
Let $S\subseteq V(\mathcal{K}_n^{(k)})$. If $|S| < n-k+2$, then $S$ is the not the zero locus of a nullvector, as otherwise would contradict Theorem \ref{Thm:CompleteHypergraphIrrComps}. Conversely, if $|S| \geq n-k+2$, there are at most $k-2$ vertices outside $S$. Since $\mathcal{K}_n^{(k)}$ is $k$-uniform, $S$ contains at least two vertices from every edge of $E(\mathcal{K}_n^{(k)})$. Therefore, if $v \in V(\mathcal{K}_n^{(k)})$, then $\phi_S f_{\mathcal{K}_n^{(k)}, v} = 0$.
\end{proof}

Theorem \ref{Thm:CompleteHypergraphIrrComps} gives that $\ker({\mathcal{K}_{n}^{(k)}})$ contains $\binom{n}{n-k+2} = \binom{n}{k-2}$ irreducible components each of codimension $n-k+2$, i.e., dimension $k - 2$. Furthermore, Corollary \ref{Cor:CompleteHypergraphKernelClosed} completely describes the collection of kernel-closed sets for $\mathcal{K}_n^{(k)}$. What about the collection of SZF-closed sets? Those are summarized by the following result. 

\begin{prop}
Let $\mathcal{K}_{n}^{(k)}$ be a complete $k$-uniform hypergraph for some $2 \leq k\leq n$ and $U \subseteq [n] = V(\mathcal{K}_n^{(k)})$. Then $U$ is a stalled set if and only if $|U| \notin \{n-k, n-k+1\}$. 
\end{prop}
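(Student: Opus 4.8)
The plan is to reduce the claim to a single binomial-coefficient computation, exploiting that every $k$-subset of $[n]$ is an edge of $\mathcal{K}_n^{(k)}$. Set $m := |[n] \setminus U| = n - |U|$. By Definition \ref{Def:SZFruleHyper}, $U$ is stalled precisely when no vertex is a \emph{forcing vertex} for $U$, i.e., when for every $v \in [n]$ the number of edges $e \ni v$ with $(e \setminus \{v\}) \cap U = \emptyset$ is not equal to $1$. So the first move is to count, for a fixed $v$, these edges.

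Since every $k$-set is an edge, an edge $e \ni v$ with $(e \setminus \{v\}) \cap U = \emptyset$ is exactly a set $\{v\} \cup S$ where $S$ is a $(k-1)$-subset of $[n] \setminus U$ in the case $v \in U$, and a $(k-1)$-subset of $[n] \setminus (U \cup \{v\})$ in the case $v \notin U$. Hence the count is $\binom{m}{k-1}$ when $v \in U$ and $\binom{m-1}{k-1}$ when $v \notin U$. Next I would invoke the elementary fact that, for an integer $j \geq 1$, $\binom{a}{j}$ equals $0$ for $a < j$, equals $1$ for $a = j$, and is at least $2$ for $a > j$, applied with $j = k-1 \geq 1$. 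This shows that $U$ has a forcing vertex lying in $U$ if and only if $U \neq \emptyset$ and $m = k-1$, and $U$ has a forcing vertex lying outside $U$ if and only if $U \neq [n]$ and $m - 1 = k-1$, i.e., $m = k$.

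The final step is bookkeeping: translate $m = k-1$ into $|U| = n-k+1$ and $m = k$ into $|U| = n-k$, and check that the nonemptiness side conditions are automatic — $|U| = n-k+1 \geq 1$ since $k \leq n$, and $|[n] \setminus U| = k \geq 2 > 0$ when $|U| = n-k$. Thus the SZF rule can be applied to $U$ somewhere if and only if $|U| \in \{n-k,\, n-k+1\}$, which is exactly the negation of the claim, so $U$ is stalled if and only if $|U| \notin \{n-k,\, n-k+1\}$. I do not expect a genuine obstacle here; the only point requiring care is the handling of the degenerate cases where $U$ is empty or all of $[n]$, and those are dispatched by the inequalities $2 \leq k \leq n$ as just noted. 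The conceptual content is merely that in a complete hypergraph the number of forcing edges through a vertex depends only on $|U|$ and on whether $v \in U$, via one binomial coefficient.
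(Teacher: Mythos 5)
Your proof is correct and follows essentially the same route as the paper's: both count, for each vertex $v$, the number of edges $e \ni v$ with $(e\setminus\{v\})\cap U = \emptyset$ in the complete hypergraph and determine when that count equals $1$. You merely package the paper's four-case analysis into the single formula $\binom{m}{k-1}$ (resp.\ $\binom{m-1}{k-1}$) for $v\in U$ (resp.\ $v\notin U$), which is a slightly cleaner presentation of the identical idea.
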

\begin{proof}
Let $U\subseteq [n]$ and $v \in [n]$ be arbitrary. We split into cases according to $|U|$. 

Case $1$: $|U| > n-k+1$. Then every edge of $\mathcal{K}_n^{(k)}$ incident to $v$ contains at least two representatives of $U$, so the SZF rule cannot be applied at $v$. 

Case $2$: $|U| = n-k+1$. Since $n\geq k$, $U\neq\emptyset$. If $v \in U$, then the fact that $\mathcal{K}_n^{(k)}$ is $k$-uniform and $V(\mathcal{K}_n^{(k)})\setminus U$ contains exactly $k-1$ vertices implies that there is exactly one edge $e$ incident to $v$ so that $(e\setminus \{v\}) \cap U = \emptyset$ (namely, the edge containing $v$ and all vertices outside $U$). Thus, $U$ is not stalled as the SZF rule can be applied at $v$. 

Case $3$: $|U| = n-k$. Since $k \geq 2$, $n-k < n$, so $U \neq V(\mathcal{K}_n^{(k)})$. If $v \notin U$, then the fact that $\mathcal{K}_n^{(k)}$ is $k$-uniform and $V(\mathcal{K}_n^{(k)})\setminus U$ contains exactly $k$ vertices implies that exactly one edge $e$ incident to $v$ satisfies $(e\setminus \{v\}) \cap U = \emptyset$ (namely the edge containing all vertices outside $U$, which contains $v$). Thus, $U$ is not stalled as the SZF rule can be applied at $v$. 

Case $4$: $|U| < n-k$. If $v \in U$, then the fact that $\mathcal{K}_n^{(k)}$ is $k$-uniform and $V(\mathcal{K}_n^{(k)})\setminus U$ contains at least $k+1$ vertices implies that at least $\binom{k+1}{k-1} > 1$ edges $e$ incident to $v$ satisfy $(e\setminus \{v\}) \cap U = \emptyset$. So, the SZF rule cannot be applied to $v$. On the other hand, if $v \notin U$, then the fact that $\mathcal{K}_n^{(k)}$ is $k$-uniform and $V(\mathcal{K}_n^{(k)})\setminus U$ contains at least $k+1$ vertices implies that at least $\binom{k}{k-1} > 1$ edges $e$ incident to $v$ satisfy $(e\setminus \{v\}) \cap U = \emptyset$. So, the SZF rule cannot be applied at $v$.
\end{proof}

The previous proposition shows the SZF-closed sets can be partitioned into two classes, those with size at least $n-k+2$ and those with size at most $n-k-1$. The partition class with larger sets agrees with the kernel-closed, while those in the other partition class are not kernel-closed. However, if $n=k$, the partition class containing sets of size $n-k-1$ does not exist, meaning the collections agree only for the single edge, $\mathcal{K}_k^{(k)}$. Thus, complete hypergraphs satisfying $n > k$ provide an example where the collections of SZF-closed and kernel-closed sets are not the same.

\section{Future Directions} \label{Sec:Conclusion}

Here, we list a few problems that remain open.

\begin{enumerate}
    \item Is there a combinatorial characterization of graphs which are SZF-complete, at least for bipartite graphs?  What about for hypergraphs?
    \item Describe the class of graphs whose SZF-closure is a matroid closure operator.
    \item How can the rest of Theorem \ref{Thm:BigEquiv} be generalized to hypertrees?
    \item Describe the structure of the kernel and SZF matroids.  What are the atoms, coatoms, independent/dependent sets, cycles, hyperplanes, etc.?
    \item Can the frameworks of closure operators and matroids, or generalizations thereof, be applied to describe the kernel and SZF-closed set systems for hypergraphs?
    \item Is Corollary \ref{Cor:Gammoid} true for a broader range of graphs $G$?
\end{enumerate}

\section{Acknowledgements}

Thanks to Alex Duncan and Darren Narayan for helpful discussions during the current work, to Vladimir Nikiforov for asking the right questions, and to Leslie Hogben for giving  fascinating talks that sparked our interest in zero forcing.

\bibliographystyle{plain}
\bibliography{ref}

\end{document}